\newfont{\sheaf}{eusm10 scaled\magstep1}
\newcommand{\ra}{\ensuremath{\rightarrow}}
\def\eea{\end{eqnarray*}}
\def\bea{\begin{eqnarray*}}
\def\Bbb{\bf}
\def\QQ{{\Bbb Q}}
\def\s{\sigma}
\def\la{\lambda}
\def\ga{\gamma}
\def\Ga{\Gamma}
\newcommand{\Proof}{{\it Proof. }}
\newtheorem{teo}{Theorem}[section]
\newtheorem{df}[teo]{Definition}
\newtheorem{lem}[teo]{Lemma}
\newtheorem{cor}[teo]{Corollary}
\newtheorem{oss}[teo]{Remark}
\newtheorem{prop}[teo]{Proposition}
\newtheorem{conj}[teo]{Conjecture}
\newtheorem{Question}[teo]{Question}
\newcommand{\C}{\ensuremath{\mathbb{C}}}
\newcommand{\R}{\ensuremath{\mathbb{R}}}
\newcommand{\Z}{\ensuremath{\mathbb{Z}}}
\newcommand{\Q}{\ensuremath{\mathbb{Q}}}
\newcommand{\F}{\ensuremath{\mathbb{F}}}
\newcommand{\N}{\ensuremath{\mathbb{N}}}
\newcommand{\hol}{\ensuremath{\mathcal{O}}}
\newcommand{\PP}{\ensuremath{\mathbb{P}}}
\newcommand{\FS}{\ensuremath{\mathfrak{S}}}
\newcommand{\gc}{\ensuremath{\mathfrak{c}}}
\DeclareMathOperator{\Aut}{Aut}
\DeclareMathOperator{\Ad}{Ad}
\DeclareMathOperator{\Out}{Out}
\DeclareMathOperator{\Inn}{Inn}
\DeclareMathOperator{\Gal}{Gal}
\DeclareMathOperator{\Map}{Map}
\DeclareMathOperator{\Epi}{Epi}
\title[Absolute Galois acts faithfully on  moduli] {Faithful actions of the absolute Galois group  on connected components of 
moduli spaces}
\author{Ingrid Bauer, Fabrizio Catanese and Fritz Grunewald}
\begin{document}

\date{\today}

\begin{abstract} 
We give  a canonical procedure associating to an algebraic number $a$  first a hyperelliptic curve $C_a$,  
and then a triangle curve $(D_a, G_a)$ obtained through the normal closure of an  associated Belyi function.

In this way we show  that the absolute Galois group  $\Gal(\bar{\Q} /\Q)$ acts faithfully 
 on the set of isomorphism classes of marked
triangle curves, and on the set of connected
components of  marked moduli spaces of surfaces isogenous to a higher product
(these are the free quotients 
of a product  $C_1 \times C_2$ of curves of respective genera $g_1, g_2 \geq 2$ by the action of a finite group $G$).
We  show then, using again the surfaces isogenous to a product, first that it acts faithfully 
on the set of connected
components of   moduli spaces of surfaces of general type (amending an incorrect proof in a previous ArXiv version of the paper);
and then, as a consequence, we obtain that  for every  element $\sigma \in \Gal(\bar{\Q} /\Q)$, not in the conjugacy class of
complex conjugation, there exists a surface of general type $X$ such that
$X$ and the Galois conjugate surface $X^{\sigma}$  have nonisomorphic fundamental groups.

Using polynomials with only two critical values, we can moreover exhibit infinitely many explicit examples
of such a situation. 
  \end{abstract}
\maketitle

\section*{Introduction}
In the 60's J. P. Serre showed  in \cite{serre}  that there exists a field automorphism  $\sigma \in
\Gal(\bar{\Q} /\Q)
$, and a variety $X$ defined over $\bar{\Q}$ such that 
$X$ and the Galois conjugate variety $X^{\sigma}$  have non isomorphic fundamental groups, in
particular they are not homeomorphic.

In  this note we give new examples  of this phenomenon, using  the so-called `surfaces isogenous to a product' whose weak rigidity
was proven in \cite{isogenous} (see also \cite{cat03}) and which by definition are  quotients of a product of curves ($C_1
\times C_2$) of respective genera at least $2$ by the free action of  a finite group $G$.

One of our main results is a  strong
sharpening of the phenomenon discovered by Serre: observe in this respect that, if   $\mathfrak c$
denotes complex conjugation, then $X$ and $X^{\gc}$ are diffeomorphic. 

\begin{teo} \label{fundamentalgroup}  If $\sigma \in \Gal(\bar{\Q} /\Q)$ is not in the conjugacy class of $\gc$,
then there exists a surface isogenous to a product $X$ such that $X$ and the Galois conjugate variety $X^{\sigma}$  have non isomorphic fundamental groups. 
\end{teo}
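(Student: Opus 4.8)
\smallskip
\noindent\textbf{Proof strategy.}
The plan is to reduce the statement to a \emph{faithfulness-modulo-complex-conjugation} property of the Galois action on connected components, and then to feed in the canonical construction $a\mapsto C_a\mapsto (D_a,G_a)$ to verify that property for every $\sigma$ not conjugate to $\gc$. Recall first the shape of the fundamental group: a surface $X$ isogenous to a higher product is $(C_1\times C_2)/G$ with $g_i\ge 2$ and $G$ acting freely, so $\pi_1(X)$ sits in an exact sequence $1\to \pi_1(C_1)\times\pi_1(C_2)\to \pi_1(X)\to G\to 1$ (the mixed case differs only by an index-two subtlety that one can avoid by design). Catanese's weak rigidity for such surfaces (\cite{isogenous}, see also \cite{cat03}) shows that $\pi_1(X)$ determines the connected component of the moduli space of $X$ \emph{up to complex conjugation}: if $X,X'$ are isogenous to a product and $\pi_1(X)\cong\pi_1(X')$, then $X'$ lies in the connected component of $X$ or in that of the complex conjugate surface $\overline{X}$. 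Hence it suffices to exhibit, for each $\sigma$ not conjugate to $\gc$, a surface $X$ isogenous to a product for which $X^\sigma$ lies in \emph{neither} the connected component of $X$ \emph{nor} that of $\overline{X}$; equivalently, to show that $\sigma$ does not act on the set of these connected components pointwise below the involution induced by $\gc$.

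Next I invoke the canonical construction. To $a\in\bar\Q$ it attaches the hyperelliptic curve $C_a$, a Belyi function on it, and, via the normal closure, a triangle curve $(D_a,G_a)$; attaching to $(D_a,G_a)$ a suitable surface $X_a$ isogenous to a product (pairing it, if necessary, with a fixed auxiliary triangle curve so that $g_1\ne g_2$ and the example is rigid of unmixed type), one gets $\pi_1(X_a)$ carrying, in a group-theoretically recoverable way, the pair $(D_a,G_a)$, hence the curve $C_a$, hence the number $a$. The whole construction is equivariant for field automorphisms, so $X_a^\tau\cong X_{\tau(a)}$ for $\tau\in\Gal(\bar\Q/\Q)$, while the complex conjugate surface is $\overline{X_a}\cong X_{\bar a}$. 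By the faithfulness statements for marked triangle curves and for connected components of the marked moduli spaces (established earlier), the assignment $a\mapsto[\text{connected component of }X_a]$ is injective enough that $[X_a]$ together with $[\overline{X_a}]=[X_{\bar a}]$ reconstruct the unordered pair $\{a,\bar a\}$. Therefore $[X_{\sigma(a)}]\in\{[X_a],[X_{\bar a}]\}$ forces $\sigma(a)\in\{a,\bar a\}$.

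It then remains to check the soft implication: if $\sigma(a)\in\{a,\bar a\}$ held for \emph{all} $a$ in the (Galois-stable, topologically dense) supply of algebraic numbers produced by the construction, then $\sigma$ would be conjugate to $\gc$. Indeed, such a set of $a$ topologically generates $\bar\Q$, so an automorphism that on it agrees pointwise with ``identity or complex conjugation'' must, by a Baire-category argument in the Krull topology, globally agree with $\id$ or with a conjugate of $\gc$; the possibility $\sigma=\id$ is excluded at once. Contrapositively, for $\sigma\notin[\gc]$ there is an $a_0$ with $\sigma(a_0)\notin\{a_0,\bar a_0\}$, whence $X_{a_0}^\sigma=X_{\sigma(a_0)}$ lies outside the connected components of $X_{a_0}$ and of $\overline{X_{a_0}}$; by Catanese's rigidity this gives $\pi_1(X_{a_0})\not\cong\pi_1(X_{a_0}^\sigma)$, and $X=X_{a_0}$ is the desired surface.

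The hard part is precisely the complex-conjugation bookkeeping in the last two paragraphs, and this is where an earlier version of the argument was deficient. Faithfulness of the Galois action on connected components — even on \emph{marked} ones — is not by itself enough: marked surfaces lying in distinct components share the same $\pi_1$, and unmarked surfaces in distinct components can still be diffeomorphic through complex conjugation. One must therefore control not merely whether $X^\sigma$ lies in the component of $X$, but whether it lies in the component of $X$ \emph{or} of $\overline{X}$, and rule out that a nonidentity, non-$\gc$ automorphism could nonetheless send \emph{every} $X_a$ into the component of its complex conjugate. Pinning this down — combining Catanese's theorem (which only determines $\pi_1$ up to the pair $\{[X],[\overline{X}]\}$) with the fact that the explicit construction separates $a$, $\bar a$ and $\sigma(a)$ finely enough — is the delicate step on which the whole argument turns.
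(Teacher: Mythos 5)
There is a genuine gap, and it sits exactly where you say the argument ``turns''. Your reduction hinges on the pointwise claim that $[X_{\sigma(a)}]\in\{[X_a],[X_{\bar a}]\}$ forces $\sigma(a)\in\{a,\bar a\}$, i.e.\ on injectivity of $a\mapsto \mathfrak N_a$ at the level of \emph{unmarked} connected components. This does not follow from the faithfulness results for marked triangle curves or for marked moduli spaces: two components $\mathfrak N_a$ and $\mathfrak N_b$ could a priori coincide when the triangle curves $(D_a,G_a)$ and $(D_b,G_b)$ differ by a nontrivial outer automorphism twist, and the paper never establishes ``$\mathfrak N_a=\mathfrak N_b\Rightarrow a=b$''. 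What it proves instead is a \emph{global} faithfulness statement: for $\sigma$ in the kernel $\mathfrak K$ of the action on $\pi_0(\mathfrak M)$ one extracts, via the twisting trick with $G(\la)$, a class $[\psi_1]\in Z(\Out(\tilde G_a))$ (Proposition \ref{center}), shows that $\sigma\mapsto[\psi_1]$ is an injective homomorphism of $\mathfrak K$ into an abelian group (Proposition \ref{Abeliankernel}), and then invokes the fact that $\Gal(\bar{\Q}/\Q)$ has no nontrivial normal abelian subgroup (Proposition \ref{noAbeliankernel}). You correctly flag this as the delicate step, and you correctly note that an earlier version of the argument was deficient here, but flagging the difficulty is not the same as resolving it: as written, your proof assumes precisely the unproved injectivity.

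Even granting faithfulness on $\pi_0(\mathfrak M)$ (Theorem \ref{isogenous}), the passage to fundamental groups in the paper is different from, and simpler than, your route. By Artin's theorem any $\sigma$ not conjugate to $\gc$ has infinite order; faithfulness then forces the $\sigma$-orbits on the relevant part of $\pi_0(\mathfrak M)$ to have unbounded cardinality (otherwise some nontrivial power of $\sigma$ would act trivially). Taking an orbit with at least three elements gives $S_0$, $S_1=S_0^{\sigma}$, $S_2=S_1^{\sigma}$ in three distinct components, and since at most two components can carry a given fundamental group (one component together with its complex conjugate), either $\pi_1(S_0)\not\cong\pi_1(S_1)$ or $\pi_1(S_1)\not\cong\pi_1(S_2)$. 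This sidesteps entirely the need to locate a specific $a$ with $\sigma(a)\notin\{a,\bar a\}$. Your closing step is also left vague: the ``Baire-category argument in the Krull topology'' is not the right tool, and what you actually need is the elementary fact that $\bar{\Q}$ cannot be the union of the two proper subfields $\Fix(\sigma)$ and $\Fix(\gc\sigma)$ --- but this point is moot so long as the pointwise injectivity feeding into it is unavailable.
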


Moreover, we give some faithful actions of the  
absolute  Galois group  $\Gal(\bar{\Q} /\Q)$, related among them. 

The following results are based on the concept of a (symmetry-) marked variety. A marked variety is  a
   triple  $(X,G, \eta)$ where $X$ is a projective variety,  and $ \eta \colon G \ra \Aut(X)$ is an injective homomorphism
   (one says  also  that we have an effective action of the group $G$ on $X$): 
   here two such triples  $(X,G,\eta)$, $(X',G', \eta')$ are  isomorphic iff there are isomorphisms $ f \colon  X \ra X'$,
 and $\psi \colon  G \ra G'$ such that $f$ carries  the first action $\eta$ to the second one $\eta'$ (i.e., such that $ \eta ' \circ \psi = \Ad(f) \circ \eta$,
 where $ \Ad (f) (\phi) : = f \phi f^{-1}$). A particular case of marking is the one where $ G \subset \Aut(X)$ and $\eta$ is the inclusion:
 in this case we may denote a marked variety simply by the pair $(X,G)$.

\begin{teo}\label{markedtriangleintro}To any algebraic number $a \notin \Z$ there corresponds, through a canonical procedure (depending on an integer $g \geq 3$), a marked triangle curve $(D_a, G_a)$.

This correspondence yields a faithful action of the  absolute Galois group $\Gal(\bar{\Q} /\Q)$ on the  set of isomorphism classes
of marked triangle curves.
\end{teo}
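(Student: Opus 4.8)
\emph{Overview and the procedure.} The plan is to write the canonical procedure down explicitly, to observe that it is $\Gal(\bar{\Q}/\Q)$-equivariant, and to deduce faithfulness from the fact that the procedure already separates every nontrivial $\sigma$. Here is the procedure I would use. Fix $g\ge 3$, so that $2g+1\ge 7$, and fix once and for all $2g+1$ distinct integers $p_1,\dots,p_{2g+1}$ chosen sufficiently generally that the only element of $\mathrm{PGL}_2(\bar{\Q})$ carrying the set $P=\{p_1,\dots,p_{2g+1}\}$ onto itself is the identity (possible for any sufficiently general set of $\ge 5$ points). For an algebraic number $a\notin\Z$ (hence $a\notin P$) let $C_a$ be the genus-$g$ hyperelliptic curve $y^2=(x-a)\prod_i(x-p_i)$, with hyperelliptic map $h_a\colon C_a\to\PP^1$ branched exactly over $B_a:=P\cup\{a\}$. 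Apply Belyi's reduction algorithm to the polynomial $m_a(x)\prod_i(x-p_i)$, where $m_a$ is the minimal polynomial of $a$ over $\Q$: this produces, through a canonical finite sequence of operations, a rational map $\nu$ ramified only over $\{0,1,\infty\}$ which carries every root of $m_a\prod_i(x-p_i)$, and $\infty$, into $\{0,1,\infty\}$. Since $B_a$ lies among these roots, $\phi_a:=\nu\circ h_a\colon C_a\to\PP^1$ is a Belyi function. Let $D_a\to C_a\to\PP^1$ be the Galois closure of $\phi_a$, put $G_a:=\Gal(D_a/\PP^1)$ with its tautological effective action $\eta_a$ on $D_a$; since $\phi_a$ is unramified away from $\{0,1,\infty\}$, the quotient $D_a/G_a$ is $\PP^1$ with three branch points (one checks the construction can be arranged so that all of $0,1,\infty$ genuinely branch, and so that $g(D_a)\ge 2$), hence $(D_a,G_a)$ is a marked triangle curve.

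\emph{Galois equivariance.} Each step above is either a rational-algebraic construction or the purely geometric operation of passing to a Galois closure, and therefore commutes with any $\sigma\in\Gal(\bar{\Q}/\Q)$. Explicitly $C_a^\sigma\cong C_{\sigma a}$; the map $\nu$ is unchanged by $\sigma$, since the reduction procedure only ever sees $m_a=m_{\sigma a}$ and the fixed rational points $p_i$; hence $\phi_a^\sigma=\phi_{\sigma a}$, and taking Galois closures yields an isomorphism of marked triangle curves $(D_a,G_a,\eta_a)^\sigma\cong(D_{\sigma a},G_{\sigma a},\eta_{\sigma a})$. Thus $a\mapsto[(D_a,G_a)]$ intertwines the natural action of $\Gal(\bar{\Q}/\Q)$ on algebraic numbers with its conjugation action on isomorphism classes of marked triangle curves.

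\emph{Faithfulness.} Let $\sigma\ne\id$. Since $\sigma$ fixes $\Q$, any $a\in\bar{\Q}$ with $\sigma a\ne a$ is irrational, hence $a\notin\Z$; fix such an $a$. By equivariance it suffices to prove $(D_a,G_a)\not\cong(D_{\sigma a},G_{\sigma a})$. Suppose $(f,\psi)\colon(D_a,G_a)\to(D_{\sigma a},G_{\sigma a})$ were an isomorphism of marked triangle curves; it descends to an isomorphism $\bar f\in\mathrm{PGL}_2$ from $D_a/G_a\cong\PP^1$ to $D_{\sigma a}/G_{\sigma a}\cong\PP^1$, permuting $\{0,1,\infty\}$. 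The crucial point is that the intermediate curve $C_a$ — equivalently the subgroup $H_a\le G_a$ with $D_a/H_a\cong C_a$ inducing $\phi_a$ — is recovered, up to conjugacy in $G_a$ and up to the finite $S_3$-ambiguity coming from permutations of $\{0,1,\infty\}$, from the abstract marked triangle curve together with the fixed shape of $\nu$. Granting this, $(f,\psi)$ induces an isomorphism $C_a\to C_{\sigma a}$, i.e.\ a $\mu\in\mathrm{PGL}_2(\bar{\Q})$ with $\mu(B_a)=B_{\sigma a}$. Now two cases: if $\mu(a)=\sigma a$ then $\mu(P)=P$, so $\mu=\id$ by the choice of $P$, and $\sigma a=a$, a contradiction; if instead $\mu(a)=p_j$ then $\mu$ sends at least $2g\ge 6$ points of $P$ to rational points, so $\mu\in\mathrm{PGL}_2(\Q)$, whence $a=\mu^{-1}(p_j)\in\Q$, contradicting irrationality of $a$. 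Hence $(D_a,G_a)\not\cong(D_{\sigma a},G_{\sigma a})$, so $\sigma$ acts nontrivially; as $\sigma$ was arbitrary, the action on isomorphism classes of marked triangle curves is faithful.

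\emph{The main obstacle.} The only step that is not bookkeeping is the assertion that $C_a$ — the subgroup $H_a\le G_a$ — is intrinsic to the marked triangle curve. Proving this requires characterising $H_a$ in purely group-theoretic terms, using that $\phi_a$ factors as the fixed rational map $\nu$ preceded by a degree-$2$ step together with the ramification/monodromy of $\phi_a$ over $\{0,1,\infty\}$ — for instance by arranging the parameters so that the monodromy group of $\phi_a$, in its block decomposition coming from $\nu\circ h_a$, singles out $H_a$ up to conjugacy (and up to the $S_3$-action), so that an abstract isomorphism of marked triangle curves is forced to respect it. Once this is secured, the comparison of branch loci and the elementary $\mathrm{PGL}_2$-counting above go through verbatim, and this is precisely where the genericity of $P$ and the hypothesis $g\ge 3$ enter.
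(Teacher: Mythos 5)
Your construction and equivariance steps track the paper's closely: the paper uses the explicit branch set $\{-2g,0,1,\dots,2g-1,a\}$ and proves its projective rigidity by hand in Proposition \ref{a=b} (rather than invoking genericity of the $p_i$), and it builds the Belyi map $F_x$ over the number field $L=\Q[x]/(P)$ so that it specializes simultaneously to all conjugates of $a$ --- the same functoriality you use for $\nu$. The problem is the faithfulness step, and you have located it yourself: everything hinges on the claim that the subgroup $H_a\le G_a$ with $D_a/H_a\cong C_a$ can be recovered, up to conjugacy and the $S_3$-ambiguity, from the abstract marked triangle curve. You do not prove this, and it is not bookkeeping: an isomorphism $(f,\psi)$ of marked triangle curves may involve a $\psi$ acting as a nontrivial outer automorphism of $G_a$, and nothing in your setup prevents such a $\psi$ from moving the conjugacy class of $H_a$; no group-theoretic characterization of $H_a$ via ``block decompositions of $\nu\circ h_a$'' is supplied or obviously available. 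So the argument has a genuine gap at its only nontrivial point.

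The paper closes exactly this gap by a different mechanism, in two steps. First, the hypothesis to be refuted is not that $(D_a,G_a)$ and $(D_b,G_b)$ are abstractly isomorphic, but that $(D_a,G_a,i_a)$ is isomorphic to its $\sigma$-transform $(D_a^{\sigma},G_a,\Ad(\sigma)\circ i_a)$; the marking forces the resulting isomorphism $f\colon D_a\to D_b$ to satisfy $\Ad(f)=\Ad(\sigma)$ on $G_a$ --- the marking is precisely what pins down the group identification, so no intrinsic recovery of anything is needed. Second, Lemma \ref{H} proves $\Ad(\sigma)(H_a)=H_b$ by arithmetic functoriality rather than group theory: the whole tower $D_x\to C_x\to\PP^1$ is defined over the splitting field $K$, and $\sigma$ permutes its complex fibres, carrying the tower for $a$ to the tower for $b$ together with $H_a$ to $H_b$. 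Combining the two, $\Ad(f)(H_a)=H_b$, so $f$ descends to an isomorphism $C_a\cong C_b$, and Proposition \ref{a=b} gives $a=b$. If you want to repair your write-up, replace the ``intrinsic recovery of $H_a$'' claim by this two-step argument. It is telling that when the marking is dropped the paper needs substantial additional machinery (Propositions \ref{center}, \ref{Abeliankernel} and \ref{noAbeliankernel}, via $Z(\Out(G))$ and the absence of normal abelian subgroups of $\Gal(\bar\Q/\Q)$) rather than the recovery statement you hope for --- strong evidence that your missing lemma is not the right route.
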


\begin{teo} \label{marked isogenousintro}The absolute Galois group $\Gal(\bar{\Q} /\Q)$ acts faithfully on the  set of connected
components of the (coarse) moduli spaces of \'etale marked  surfaces of general type. 
\end{teo}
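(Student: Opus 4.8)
The plan is to deduce this from Theorem \ref{markedtriangleintro}. To each marked triangle curve $(D_a,G_a)$ one attaches, compatibly with the action of $\Gal(\bar{\Q}/\Q)$, an \'etale marked surface of general type, in such a way that non--isomorphic marked triangle curves produce surfaces lying in different connected components of the (coarse) moduli space of such marked surfaces; faithfulness then follows from Theorem \ref{markedtriangleintro}.

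First I would fix, depending only on the abstract finite group underlying $G_a$, an auxiliary curve $C'$ of genus $\geq 2$ carrying a \emph{free} action of $G_a$, and with $\mathrm{genus}(C')$ much larger than $\mathrm{genus}(D_a)$. Such a $C'$ exists: any epimorphism from the fundamental group of a compact Riemann surface of sufficiently large genus onto $G_a$ has kernel a finite index subgroup of a surface group, hence torsion free, so the associated unramified Galois cover carries a free $G_a$--action. The $G_a$--action on $D_a$ itself is of course not free (it has fixed points over the three branch points), but $G_a$ acts diagonally and \emph{freely} on $D_a\times C'$, and $(S_a,G_a,\eta_a):=(D_a\times C',\,G_a,\,\text{this free action})$ is an \'etale marked surface of general type — here $S_a$ is of general type as a product of curves of genera $\geq 2$, and the quotient $S_a/G_a$ is a surface isogenous to a higher product. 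This defines an assignment $a\mapsto (S_a,G_a,\eta_a)$, and I would check it is equivariant: since the chain producing $(D_a,G_a)$ in Theorem \ref{markedtriangleintro} — the hyperelliptic curve $C_a$, the associated Belyi function, its normal closure $D_a$, the monodromy group $G_a$ — consists of algebraic operations commuting with field automorphisms, one has $(D_a,G_a)^{\sigma}=(D_{\sigma(a)},G_{\sigma(a)})$; as the second factor is auxiliary, the $\sigma$--conjugate of $(S_a,G_a,\eta_a)$ is (isomorphic to) $(D_{\sigma(a)}\times (C')^{\sigma},\,G_{\sigma(a)},\,\text{diagonal action})$, whose building blocks are, up to isomorphism, the $G_{\sigma(a)}$--cover $D_{\sigma(a)}\to\PP^1$ and $(C')^{\sigma}$ with its free $G_{\sigma(a)}$--action. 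Hence, under the natural action of $\Gal(\bar{\Q}/\Q)$ on the set of connected components of the moduli space of \'etale marked surfaces of general type, the component of $(S_a,G_a,\eta_a)$ is carried to that of $(S_{\sigma(a)},G_{\sigma(a)},\eta_{\sigma(a)})$.

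Finally I would invoke the weak rigidity of surfaces isogenous to a product established in \cite{isogenous}: the connected component of the moduli space containing such a (marked) surface is a discrete invariant, determined by the group $G_a$ together with the two building--block covers $C_i\to C_i/G_a$, taken up to the action of $\Aut(G_a)$, up to the Hurwitz/braiding moves on the branch locus of each factor, and — only when the two factors have equal genus — up to interchanging them. Granting the comparison of equivalence relations discussed below, if $\sigma\ne\id$ we choose, as in Theorem \ref{markedtriangleintro}, an algebraic number $a$ with $(D_a,G_a)\not\cong(D_a,G_a)^{\sigma}$; then $(S_a,G_a,\eta_a)$ and its $\sigma$--conjugate lie in distinct connected components, so $\sigma$ moves at least one connected component, and, $\sigma\ne\id$ being arbitrary, the action is faithful.

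The main obstacle I anticipate is exactly this comparison of equivalence relations: one must verify that, for the \emph{marked} moduli space and with the \'etale marking $\eta_a$ and the auxiliary curve $C'$ chosen as above (in particular $\mathrm{genus}(C')\neq\mathrm{genus}(D_a)$, which forbids interchanging the factors), the equivalence relation induced on the first building block $D_a\to\PP^1$ is precisely isomorphism of marked triangle curves — so that two non--isomorphic marked triangle curves can never yield surfaces in the same connected component. This amounts to a careful bookkeeping of the residual ambiguities on the first factor (the actions of $\Aut(G_a)$ or $\Inn(G_a)$ and the finite braiding of the three branch points of $\PP^1$) against the notion of isomorphism of marked triangle curves used in Theorem \ref{markedtriangleintro}; and, should these relations not coincide verbatim, of checking that the particular curves produced there remain inequivalent under the coarser one. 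All the remaining verifications — existence and genus bound for $C'$, freeness of the diagonal action, general type of $S_a$, and the naturality/equivariance of the assignment — are routine.
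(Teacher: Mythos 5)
Your construction and reduction are essentially the paper's own: the paper likewise forms the \'etale marked surface $(D_a\times C_2, G_a)$ with $C_2$ an \'etale $G_a$-cover of an auxiliary curve of genus $g'\geq 2$, so that the diagonal action is free and the quotient is isogenous to a higher product, and then deduces faithfulness on marked components from faithfulness on marked triangle curves (Theorem \ref{markedtriangleintro}). The ``comparison of equivalence relations'' you leave pending is closed in the paper by the rigidity lemma 3.8 of \cite{isogenous}: an isomorphism between the two marked surfaces intertwining the $G_a$-actions is necessarily of product type, cannot interchange the factors (the action on the triangle-curve factor is not free, on the other it is --- the paper's substitute for your genus condition), and hence restricts to an isomorphism of the marked triangle curves themselves; the marking has already cut the residual ambiguity from $\Aut(G_a)$ down to $\Inn(G_a)$, which acts trivially on isomorphism classes of marked triangle curves, so no coarser equivalence intervenes.
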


With a rather  elaborate strategy we can then show the stronger result:

\begin{teo} \label{isogenous} The absolute Galois group $\Gal(\bar{\Q} /\Q)$ acts faithfully on the  set of connected
components of the (coarse) moduli space of    surfaces of general type. 
\end{teo}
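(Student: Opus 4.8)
\Proof (Outline of the intended argument.)
The plan is to promote the faithful action on marked triangle curves (Theorem~\ref{markedtriangleintro}) to the moduli space of surfaces of general type by attaching to each marked triangle curve $(D_a,G_a)$ a \emph{rigid} surface isogenous to a product whose isomorphism class already remembers the marking. Note first that if $Y$ is a rigid surface of general type (that is, $H^1(Y,\Theta_Y)=0$) then the point $[Y]$ is open, and trivially closed, in the moduli space, hence is a connected component all by itself; and a surface isogenous to a product of two triangle curves is rigid, since by \cite{isogenous} (see also \cite{cat03}) its only deformations come from deformations of the two $G$-covers, while a $G$-cover of $\PP^1$ branched in just three points has no moduli. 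It therefore suffices to produce, for each $\sigma\in\Gal(\bar\Q/\Q)$ with $\sigma\neq\id$, a rigid surface isogenous to a product $X$ with $X\not\cong X^\sigma$: its component $\{[X]\}$ is then moved by $\sigma$. Since the action of $\Gal(\bar\Q/\Q)$ on marked triangle curves is faithful, we may fix an integer $g\geq 3$ and an algebraic number $a\notin\Z$ for which $(D_a,G_a)$ and $(D_a,G_a)^\sigma\cong(D_{a^\sigma},G_{a^\sigma})$ are \emph{not} isomorphic as marked triangle curves.

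From $(D_a,G_a)$ I would build the surface isogenous to a higher product
\[
X_a\;=\;(D_a\times D'_a)\big/G_a ,
\]
where $D'_a$ is a companion triangle curve endowed with a faithful $G_a$-action, manufactured from $a$ by a fixed canonical recipe — so that $a\mapsto X_a$ commutes with $\Gal(\bar\Q/\Q)$, whence $(X_a)^\sigma\cong X_{a^\sigma}$ — and subject to three design conditions: (a) the diagonal $G_a$-action on $D_a\times D'_a$ is free and $g(D_a),g(D'_a)\geq 2$; (b) $g(D_a)\neq g(D'_a)$; and (c) the Hurwitz datum of the $G_a$-cover $D'_a\to\PP^1$ is \emph{rigid}, meaning that its only self-equivalences, under $\Aut(G_a)$ together with permutations and braidings of the three branch points, are those induced by inner automorphisms of $G_a$ (which alter neither the isomorphism class of the marked curve nor that of $X_a$). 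Condition (a) is arranged by choosing the ramification of $D'_a$ complementary to that of $D_a$; conditions (b) and (c) amount to a finite group-theoretic verification, to be carried out uniformly over the family of groups $G_a$ coming from the normal-closure construction.

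The crux is then the assertion: \emph{if $X_a\cong X_b$ then $(D_a,G_a)\cong(D_b,G_b)$ as marked triangle curves.} The point is that the entire marking datum is intrinsic to the bare surface. Indeed $X_a$ possesses the two isotrivial fibrations $p_1\colon X_a\to D_a/G_a\cong\PP^1$, with fibre $D'_a$, and $p_2\colon X_a\to D'_a/G_a\cong\PP^1$, with fibre $D_a$, and one checks that these are its only isotrivial fibrations over a rational base; by condition (b) the fibre genera then single out $p_1$ canonically (and, since $X_a\cong X_b$ forces these genera to agree, $p_1^{X_a}$ corresponds to $p_1^{X_b}$ under the isomorphism). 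Now $p_1$ is classified by its orbifold monodromy $\pi_1^{\mathrm{orb}}(\PP^1;\,q_1,q_2,q_3)\to\Aut(D'_a)$, whose image is $G_a$ (the $G_a$-action on $D'_a$ being faithful) and whose associated Galois cover of $\PP^1$ is exactly $D_a\to D_a/G_a$; reading this off recovers the marked curve $(D_a,G_a)$ up to an isomorphism of $G_a$ and up to automorphisms of $(\PP^1;\,q_1,q_2,q_3)$, which is precisely the equivalence defining isomorphism of marked triangle curves. Condition (c) ensures that no further identification is introduced, and together with (b) that no automorphism of $X_a$ nor any extraneous isotrivial fibration can interfere. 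Granting the assertion, $X_a\not\cong X_{a^\sigma}$ by the choice of $a$, so the component $\{[X_a]\}$ of the moduli space of surfaces of general type is not fixed by $\sigma$; as $\sigma\neq\id$ was arbitrary, the action on connected components is faithful. \QED

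The delicate point — and where the proof in the earlier ArXiv version was flawed — is exactly this last assertion: forgetting the marking $\eta\colon G_a\hookrightarrow\Aut(X_a)$ threatens to erase precisely the information separating $X_a$ from its Galois conjugate, because a given surface isogenous to a product may carry inequivalent markings, its two factors may a priori be interchanged by an abstract isomorphism, and its automorphism group may exceed $\eta(G_a)$. The antidote is the careful choice of the companion curve: the genus inequality (b) labels the two isotrivial fibrations intrinsically and forbids any factor swap, while the rigidity (c) cuts the self-equivalences of the companion datum down to inessential ones, so that the whole $G_a$-covering structure becomes an isomorphism invariant (indeed a deformation, and even a homeomorphism, invariant) of $X_a$ that coincides with the one carried by the marked triangle curve. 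What this really requires is, first, the explicit construction of such a $D'_a$, again of triangle type so that $X_a$ remains rigid, and second the proof that conditions (b) and (c) can be met simultaneously for every group $G_a$ produced by the normal-closure procedure; this uniform group-theoretic check is the technical heart of the argument, and is where the gap in the previous version is repaired.
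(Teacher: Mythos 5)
Your overall architecture --- attach to $(D_a,G_a)$ a rigid surface $X_a=(D_a\times D'_a)/G_a$ and argue that the bare surface remembers the marked triangle curve --- is the natural first attempt, and it is essentially the strategy of the earlier ArXiv version whose proof the present paper explicitly amends. The gap sits exactly where you park it as a ``technical'' verification, namely condition (c). When $X_a\cong X_b$, the rigidity lemma gives a product-type lift $f_1\times f_2\colon D_a\times D'_a\to D_b\times D'_b$ in which $f_1$ is only \emph{twisted} equivariant: $f_1(g\cdot x)=\psi(g)\cdot f_1(x)$ for some $\psi\in\Aut(G_a)$, well defined only in $\Out(G_a)$. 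Proposition \ref{innerOK} lets you conclude $C_a\cong C_b$, hence $a=b$, only when $\psi$ is \emph{inner}; recovering ``the deck group as a subgroup of $\Aut(D_a)$'' from the fibration does not identify it with the abstract $G_a$ any more canonically than that. To force $\psi$ inner you need a companion system of spherical generators of $G_a$ whose Hurwitz class is both stable under $\Gal(\bar\Q/\Q)$ (otherwise the conjugate $(D'_a)^{\sigma}$ is a different triangle curve and condition (c) says nothing about the twist relating them) and has trivial stabilizer in $\Out(G_a)$. There is no argument that such a system exists for every monodromy group $G_a$ produced by the Belyi/normal-closure construction --- a family of $2$-generated finite groups over which one has no control --- and this existence statement is the entire difficulty, not a routine check. (It can be done for specific groups, e.g.\ the $(5,5,5)$-system of $\mathfrak A_7$ in the last section, which is why explicit examples are accessible; but that does not give faithfulness.)

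The paper's proof circumvents this by a genuinely different mechanism. It first proves the \emph{marked} statement (Theorem \ref{marked isogenous}), with second factor an \'etale $G$-cover $C_2\to C'$ of a \emph{varying} curve of genus $g'$ rather than a second triangle curve. Then, instead of trying to force the twist $\psi$ to be inner, it replaces the diagonal subgroup by $G(\la)=\{(g,\la(g))\}$ for every $\la\in\Aut(G)$; if $\sigma$ fixes all the resulting components, the compatibility $\psi_2\circ\la=\la\circ\psi_1$ for all $\la$ forces $[\psi_1]\in Z(\Out(G))$ (Proposition \ref{center}). This yields an injective group homomorphism from the kernel $\mathfrak K$ of the action on $\pi_0(\mathfrak M)$ into $\prod_a\prod_{\tilde\rho}Z(\Out(\tilde G_a))$ (Proposition \ref{Abeliankernel}), so $\mathfrak K$ is an abelian normal subgroup of $\Gal(\bar\Q/\Q)$, hence trivial by the Fried--Jarden result (Proposition \ref{noAbeliankernel}). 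Note the logical shape is different from yours: the paper never exhibits, for a given $\sigma\neq\id$, a specific component that $\sigma$ moves; it shows the kernel is abelian and imports a nontrivial theorem about $\Gal(\bar\Q/\Q)$ to kill it. Your proposal, as written, cannot be completed without proving the group-theoretic existence statement underlying (c), and that statement is precisely what the published argument is designed to avoid.
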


Our method is closely related to the so-called theory of  `dessins d' enfants' (see \cite{groth2}).
Dessins d' enfants are,  in view of Riemann's existence theorem (generalized by Grauert and Remmert in
\cite{g-r}),  a combinatorial  way to look at the monodromies of algebraic functions with only three branch
points. We emphasize once more how we make here an essential use of Belyi functions (\cite{belyi}) and of
their functoriality. Our point of view is  however more related to the normal closure of Belyi functions,
the so called marked triangle curves, i.e., pairs $(C,G)$ with $ G \subset \Aut (C)$ such that
the quotient $ C/G \cong \PP^1$ and the quotient map is branched exactly in three points.

In the first section we describe a simple but canonical construction which, 
for each choice of an integer $ g 
\geq 3$, associates to a complex number $ a \in \C \setminus \Q$
a hyperelliptic curve $C_a$ of genus $g$, and  in such a way that $C_a \cong C_b$ iff $ a = b$.

In the later sections we construct the associated triangle curves $(D_a, G_a)$ and prove the   above theorems.

It would be interesting to obtain similar types of results, for instance forgetting about the markings in the case of triangle curves, 
or even  using  only   Beauville surfaces (these are the surfaces isogenous to a product which are rigid: see \cite{isogenous}
 for the definition of
Beauville surfaces and \cite{cat03},\cite{bcg}, \cite{almeria} for further properties of these).

Theorem \ref{isogenous} was    announced by the second author at the Alghero Conference 'Topology of algebraic varieties' in
september 2006, and asserted with an incorrect proof  in the previous ArXiv version of the paper (\cite{AbsoluteGalois}).
The survey article \cite{catbumi} then transformed  some of the  theorems of \cite{AbsoluteGalois} into conjectures.
The present article then takes up again some conjectures made previously in \cite{AbsoluteGalois} and repeated
in \cite{catbumi}.

The main new input  of the present paper is the systematic use of twists of the second component of an action on a product $C_1 \times C_2$
by an automorphism of the group $G$ and the discovery that this leads to an injective  homomorphism of the Kernel $\frak K$ of the action (on the set of
connected components $\pi_0 ( \frak M)$ of the moduli space of surfaces of general type) into some Abelian group
of the form $ \oplus_G ( Z (\Out (G))$, $Z$ denoting the centre of a group. Then we use a known result (cf. \cite{F-J})
that $\Gal(\bar{\Q} /\Q)$ does not contain any nontrivial normal abelian subgroup.

Observe that
Robert Easton and Ravi Vakil (\cite{e-v}), with a completely different type of examples,
showed that the Galois group  $\Gal(\bar{\Q} /\Q)$ operates faithfully on the set of irreducible components of the
moduli spaces of surfaces of general type.

In the last section we use Beauville surfaces and polynomials with two critical values in order
to produce   infinitely many explicit and simple examples of  
pairs of surfaces of general type with nonisomorphic fundamental groups
which are conjugate under the absolute Galois group (observe in particular that  the two 
fundamental groups have then  isomorphic
profinite completions).

\section{Very special hyperelliptic curves}\label{1}

  Fix a positive integer $g \in \N$, $g \geq 3$, and define,
 for any complex number $ a \in \C \setminus \{ -2g,0,1, \ldots , 2g-1 \} $, $C_a$  as the hyperelliptic
curve   of genus $g$ 
$$ w^2 = (z-a) (z + 2g) \Pi_{i=0}^{2g-1} (z-i) $$ branched over 
$\{ -2g, 0,1, \ldots , 2g-1, a \} \subset \PP^1_{\C}$.

\begin{prop}\label{a=b} \
\begin{enumerate}
\item  Consider two complex numbers  $a,b$ such that $a \in  \C 
\setminus \Q $: then $C_a
\cong C_b$ if and only if $a = b$.
\item Assume now  that $g \geq 6$ and let $a,b \in  \C
\setminus
\{ -2g, 0,1, \ldots , 2g-1 \} $ be two complex numbers. Then $C_a
\cong C_b$ if and only if $a = b$.
\end{enumerate}
\end{prop}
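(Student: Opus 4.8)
\medskip
\noindent\textbf{Proof plan.}
The plan is to reduce both assertions to a rigidity property of point configurations on $\PP^1$, and then to exploit the very particular shape of the branch loci. Recall that a smooth projective curve of genus $g\ge 2$ is hyperelliptic exactly when it carries a degree-two morphism to $\PP^1$, and that such a morphism is unique up to post-composition with an element of $\mathrm{PGL}_2(\C)$ (equivalently, the hyperelliptic involution is central, hence canonical). Thus any isomorphism $C_a\cong C_b$ descends to a $\varphi\in\mathrm{PGL}_2(\C)$ carrying the branch locus $B_a=\{-2g,0,1,\dots,2g-1,a\}$ onto $B_b=\{-2g,0,1,\dots,2g-1,b\}$; conversely every such $\varphi$ lifts to an isomorphism, because the substitution $z\mapsto\varphi(z)$ introduces a factor that is a perfect square, $|B_a|=2g+2$ being even. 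Writing $F=\{-2g,0,1,\dots,2g-1\}$, so that $B_a=F\cup\{a\}$ and $B_b=F\cup\{b\}$, it suffices to prove that, under the hypotheses of $(1)$, resp.\ $(2)$, any $\varphi$ with $\varphi(F\cup\{a\})=F\cup\{b\}$ satisfies $a=b$. Here is a counting remark used throughout: since $|F|=2g+1$ and $|F\cup\{b\}|=2g+2$, the image $\varphi(F)$ omits at most one point of $B_b$, so $\varphi$ maps at least $2g$ points of $F$ into $F$; at most one of these may be sent to the outlier $-2g$, so $\varphi$ carries at least $2g-1$ of the integers $0,\dots,2g-1$ into $F$, hence at least $2g-2$ of them into $\{0,\dots,2g-1\}$.

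For $(1)$, where $F\subset\Q$ and $a\notin\Q$, I would first show $\varphi(a)=b$. If not, then $\varphi(a)\in F\subset\Q$; by the counting remark $\varphi$ sends at least $2g\ge 6$ of the rational points of $F$ to rational points, so $\varphi\in\mathrm{PGL}_2(\Q)$ (three rational points with rational images force rational coefficients), and then $a=\varphi^{-1}(\varphi(a))\in\Q$, a contradiction. Hence $\varphi(a)=b$, and so $\varphi(F)=F$. It remains to show that the only element of $\mathrm{PGL}_2(\C)$ stabilising $F$ is the identity; this gives $a=b$. Since $\varphi$ permutes the $2g+1\ge 7$ real points of $F$, it lies in $\mathrm{PGL}_2(\R)$ and has finite order, and an elementary case analysis on that order settles the claim: the only nontrivial projective self-symmetry of the arithmetic block $\{0,\dots,2g-1\}$ is the reversal $z\mapsto 2g-1-z$, which does not preserve $F$ because $2g-1-(-2g)=4g-1\notin F$, while a transformation with a finite pole cannot map the integers $0,\dots,2g-1$ back into the bounded set $F$ (points close to the pole are sent arbitrarily far away).

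For $(2)$, where $g\ge 6$ and rationality is not available, I argue directly. By the counting remark $\varphi$ maps at least $2g-2\ge 10$ of the integers $0,\dots,2g-1$ into $\{0,\dots,2g-1\}$; in particular three integers go to integers, so $\varphi\in\mathrm{PGL}_2(\Q)$. Write $\varphi(z)=(\alpha z+\beta)/(\gamma z+\delta)$ with coprime integral coefficients. If $\gamma\ne 0$ then for each such $k$ one has $(\gamma k+\delta)\mid(\alpha k+\beta)$, hence $(\gamma k+\delta)\mid(\gamma\beta-\alpha\delta)$, a fixed nonzero integer; since $k\mapsto\gamma k+\delta$ is injective on the $\ge 2g-2$ good integers and the images $\varphi(k)$ remain in the bounded set $F$, a short estimate rules this out for $g\ge 6$. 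So $\varphi$ is affine, $\varphi(z)=\alpha z+\beta$ with $\alpha,\beta\in\Z$ (two consecutive good integers force $\alpha\in\Z$, hence $\beta\in\Z$). Comparing the range of $\varphi$ on the good integers with $F$ yields $|\alpha|\le 2$, and $|\alpha|=2$ is impossible since then $\varphi(\{0,\dots,2g-1\})$ would be an arithmetic progression of step $2$ meeting $F$ in at most $g+1$ points, whereas at least $2g-2$ of its members lie in $F$. Hence $\alpha=\pm 1$; finally, comparing $\varphi(B_a)$ (a translate, or a reflection, of $B_a$) with $B_b$ — here the outlier $-2g$ is decisive — forces $\beta=0$ when $\alpha=1$, and forces $\varphi(z)=2g-1-z$ with $a=b=4g-1$ when $\alpha=-1$. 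In every case $a=b$.

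The step I expect to be the main obstacle is the rigidity analysis above — showing that no nontrivial Möbius transformation can carry the tightly packed block $\{0,\dots,2g-1\}$, together with the outlier $-2g$, (almost) back to itself. The two delicate points are transformations with a finite pole, for which one needs a quantitative estimate (or, over $\Q$, the divisibility bound just used), and the residual reflection-type symmetries, which are eliminated precisely by the single asymmetric point $-2g$. The reductions in the first paragraph and the rationality dichotomy are routine.
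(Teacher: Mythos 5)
Your overall architecture coincides with the paper's: an isomorphism $C_a\cong C_b$ descends, via the canonicity of the hyperelliptic involution, to a projectivity $\varphi$ with $\varphi(B_a)=B_b$; a pigeonhole count shows $\varphi\in\mathrm{PGL}_2(\Q)$; and your endgame (only $\varphi(z)=z+r$ or $\varphi(z)=-z+r$ survive, with the outlier $-2g$ forcing $r=0$, resp.\ $\varphi(z)=2g-1-z$ and $a=b=4g-1$) is literally the paper's ``first proof of the claim''. The difficulty is that the two steps you yourself flag as the main obstacle are exactly where the mathematical content lies, and the justifications you offer for them do not work as stated. In part (1), you exclude a $\varphi$ with a finite pole on the grounds that ``points close to the pole are sent arbitrarily far away''; for a \emph{finite} set this proves nothing, since the displacement depends on the coefficients (writing $\varphi(z)=q+\frac{D}{\gamma^2(z-p)}$, the image of $F$ stays bounded by $|q|+|D|/(\gamma^2\operatorname{dist}(F,p))$, which can be made small), and the maps you must rule out include every elliptic element of finite order $n\ge 3$: these always have a finite pole, act on $\PP^1_{\R}$ with all orbits of length $n$, and are untouched by any unboundedness heuristic. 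The ingredient missing from your write-up is the paper's cyclic-order argument: $\varphi\in\mathrm{PGL}_2(\R)$ preserves or reverses the cyclic order on $\PP^1_{\R}$, hence sends consecutive points of the branch set to consecutive points; for $g\ge 3$ a counting of triples then produces three consecutive integers mapped to three consecutive integers, which pins $\varphi$ down to $z\mapsto z+n$ or $z\mapsto -z+2n$. That one observation disposes of elliptic elements, inversions and finite poles simultaneously, with no estimates.

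In part (2) you propose a genuinely different, arithmetic route for the same exclusion: $(\gamma k+\delta)\mid(\gamma\beta-\alpha\delta)$ for at least $2g-2$ integers $k$, plus ``a short estimate''. The divisibility relation is correct, but the estimate is neither supplied nor, as far as I can see, short. Setting $D=\gamma\beta-\alpha\delta$ and $u_k=\gamma k+\delta$, the usable constraints are that $|D|/|u_ku_{k+1}|$ is a nonzero integer at most $4g-1$ for each consecutive good pair, that each $u_k$ divides $D$, and that $\varphi$ is strictly convex or concave with integer-valued differences on each side of the pole. The convexity argument (successive differences are distinct positive integers, so a run of $L$ good consecutive integers forces an image spread of at least $L(L+1)/2\le 4g-1$) only yields a contradiction for $g$ roughly in the twenties or thirties, not for $g=6$; and the naive comparison of $|u_ku_{k+1}|$ near and far from the pole gives an inequality of the shape $g^2\le Cg$ that likewise fails to close at $g=6$. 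So the claim that the estimate works ``for $g\ge 6$'' is precisely what needs proof, and the paper avoids the issue entirely by running the same cyclic-order/consecutive-triples count (here one must also discard the few triples spoiled by the extra real point $a$ and by $-2g$, which is why $g\ge 6$ is needed). My recommendation is to replace both hand-waved exclusions by that argument; the rest of your proposal then goes through.
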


\begin{proof}

One direction being obvious,  assume that $C_a \cong C_b$.

1)
Then the two sets with $2g+2$ elements
$B_a : = \{ -2g, 0,1, \ldots , 2g-1, a \}$ and
$B_b : = \{ -2g, 0,1, \ldots , 2g-1, b \}$ are projectively 
equivalent over $\mathbb{C}$ (the latter  set $B_b$
has also cardinality $2g+2$ since $C_a \cong C_b$  and $C_a$ smooth 
implies that also
$C_b$ is smooth).

In fact, this projectivity $\varphi$ is defined over $\Q$, since 
there are three rational numbers which are
carried into three rational numbers (because $g \geq 2$).

Since $ a \notin \Q$ it follows that $\varphi(a)  \notin \Q$ hence $\varphi(a) =  b \notin \Q$ and $\varphi$ 
maps $ B: = \{-2g, 0,1,\ldots 2g-1\}$ to
$ B$, and in particular  $\varphi$ has finite order.  Since $\varphi$ yields an automorphism of $\PP^1_{\R}$, it 
either leaves the cyclical order of
$(-2g, 0,1, \ldots , 2g-1)$ invariant or reverses it,  and since $g \geq 3$ 
we see that there are   $3$ consecutive
integers such that $\varphi$ maps them to $3$ consecutive integers. 
Therefore $\varphi$ is either an
integer translation,  or an affine symmetry  of the form $ x \mapsto 
- x + 2n$, where $ 2 n \in \Z$. In the former case   $\varphi
= id$, since it has finite order, and it follows  in particular that $a = b$. In the 
latter case it must be $2g + 2n = \varphi
(-2g) = 2g-1$ and $2n = \varphi (0) = 2g-2$,
  and we derive the contradiction $ -1 = 2n = 2g-2$.

2) The case where $a , b \in \Q$ is similar to the previous one: 
$\varphi$ preserves or reverses  the cyclical order of the
two sets, and we are done as before  if $\varphi (a) = b$.

Observe that the set $ B_a : = \{-2g , 0,1,\ldots, 2g-1, a\}$ admits 
a parabolic transformation $\psi (x) : = x
+ 1$, with  $\infty$ as fixed point, with the  properties that $ | 
\psi (B_a ) \cap B_a | \geq 2g-1$, and
that there is an element $0 \in B_a$ such that $\psi (0),  \dots , 
\psi^{2g-1} (0) \in B_a$.
If $B_b$ is projectively
equivalent to $B_a$, then also $B_b$ inherits such a parabolic 
transformation $
\tau$ with this property. 

Assume that  the fixed point of 
$\tau$ is $\infty$:  then $\varphi (\infty) = \infty$, $\varphi $ is affine and,
since $ \varphi \circ \psi = \tau \circ \varphi  $, $\tau$ is of the form $\tau (x) = x + m$, for some $ m \in \Q$,
and moreover $\varphi (x) = m x + r$, for some  $ r \in \Q$.
In fact, if we set $\varphi (x) = a x + r$,  $ \varphi \circ \psi = \tau \circ \varphi \Leftrightarrow a (x+1) + r = ax + r + m  \Leftrightarrow m=a.$

Since $\tau (x) = x + m$,  the above property implies that
$ m = \pm 1$, hence also  $\varphi (x) = \pm  x + r$.

\noindent
{\bf Claim:} $\varphi (x) = \pm  x + r \ \implies \ a=b$.

\noindent
{\em First proof of the claim:}

 If $\varphi (x) =  x + r$, then 
$B_b$ contains $\{r, \dots, (2g-1)+ r\} $ and either 

\begin{enumerate}
\item
$r=0$ and  $\varphi = id$ or 
\item
$r=1$, $B_b$ contains $  2g$ and $-2g+1$, a contradiction, or
\item
$r=- 1$, $B_b$ contains $  -1$ and $-2g-1$, a contradiction. 

\end{enumerate}
Similarly, if  
$\varphi (x) = - x + r$, $B_b$ contains $\{r - (2g-1), \dots,  r\} $ 
 and either 

\begin{enumerate}
\item
$r=2g-1 $ and  $a=b = 4g-1$ or 
\item
$r=2g$, $B_b$ contains $  2g$ and $4g$, a contradiction, or
\item
$r= 2g-2$, $B_b$ contains $  -1$ and $-4g-2$, a contradiction. 

\end{enumerate}

\qed

Let $w$ be the fixed point of $\tau$: then we may assume that $ w \in \Q$ and we must 
exclude this case. Observe that in the
set $B_b$ each consecutive triple of points is a triple of 
consecutive integers, if no element in the
triple is $ - 2g$ or $b$. This excludes at most six triples. Keep  in mind 
that $ a \in B_a$ and consider all the
consecutive triples of integers in the set $\{0,1,2,3,4,5,6,7, 8 , 9, 
10, 11\}$: at most two such triples are not
a consecutive triple of points of $B_a$. We conclude that there is 
a triple of consecutive integers in the set
$\{0,1,2,3,4,5,6,7, 8 , 9, 10, 11\}$ mapping to a triple of 
consecutive integers under
$\varphi$. Then either $\varphi$ is an integer translation $ x 
\mapsto x + n$, or it is a symmetry  $ x
\mapsto - x + 2 n$ with $ 2n \in \Z$.

\noindent
{\em Second proof of the claim:}

In both cases the intervals equal to the respective convex 
spans of the sets $B_a$, $B_b$ are sent to
each other by $\varphi$, in particular the  length is preserved and 
the extremal points are permuted. If  $ a
\in [-2g, 2g-1]$ also  $ b \in [-2g, 2g-1]$ and in the translation 
case $n=0$, so that $\varphi (x) = x$ and $a
= b$. We see right away that $\varphi$  cannot be a symmetry, because 
only two points belong to the left
half  of the interval.

 If  $ a  < -2g $ the interval has length $ 
2g-1 -a$, if $ a > 2g-1$ the interval has length $
2g   + a$. Hence, if both  $ a , b < -2g $, since the length is preserved, we find that $a=b$;
 similarly if $ a, b > 2g-1$. 

By symmetry of the situation, we only need to exclude the case $ a  < -2g $, $ b >  2g-1$: here we must have $ 2g-1 
-a = 2 g + b$, i.e., $ a = -b-1$. We have already treated the case where
$b = \varphi (a)$, hence we have a translation $\varphi (x ) = x + n$ and  since $\varphi (2g-1 ) = b$, $\varphi 
(2g-2 ) = 2g-1$, it follows that  $n=1$, and then
$\varphi (-2g ) = -2 g + 1$ gives a contradiction.
\qed

\end{proof}

We shall assume from now on that $a,b \in \bar{\Q} \setminus \Q$ and that there is a field  automorphism
$\sigma \in \Gal(\bar{\Q} /\Q)$ such that $\sigma (a) = b$.  (Obviously, for any  $\sigma \in \Gal(\bar{\Q}
/\Q)$ different from the identity, there are  $a,b \in \bar{\Q} \setminus \Q$ with  $\sigma (a) =
b$ and $ a \neq b$.)

The following is a special case of Belyi's celebrated theorem asserting that an algebraic curve $C$
can be defined over $\bar{\Q}$ if and only if it admits a Belyi function,
i.e., a holomorphic  function $f \colon C \ra \PP^1$ whose  only critical values are in the set $\{0, 1, \infty\}$.
The main assertion concerns the functoriality of a certain Belyi function.

\begin{prop} \label{Belyi}
 Let $P \in \Q[x]$ be the minimal polynomial of $a \in \bar{\Q}$ and consider the field $L:=
\mathbb{Q}[x]/(P)$. Let $C_x$ be the hyperelliptic curve over $L$  
$$ w^2 = (z-x) (z + 2g) \Pi_{i=0}^{2g-1} (z-i) .$$ Then there is a rational function
$F_x \colon C_x
\rightarrow
\PP^1_L$ such that for each $a \in \C$ with $P(a) = 0$ it holds that  the rational function $F_a$ (obtained
under the specialization $ x \mapsto a$) is a Belyi function
 for $C_a$.
\end{prop}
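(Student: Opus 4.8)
The plan is to build $F_x$ in two stages, first producing a Belyi-type function for $\PP^1_L$ which sends the branch locus $B_a = \{-2g, 0, 1, \ldots, 2g-1, a\}$ into $\{0,1,\infty\}$, and then composing with the hyperelliptic double cover $\pi_x \colon C_x \to \PP^1_L$, $(z,w) \mapsto z$. For the second stage note that $\pi_x$ is itself ramified exactly over $B_a$, so if $h_x \colon \PP^1_L \to \PP^1_L$ is a rational function whose critical values together with $h_x(B_a)$ lie in $\{0,1,\infty\}$, then $F_x := h_x \circ \pi_x$ has critical values contained in $h_x(B_a) \cup \{$critical values of $h_x\} \subseteq \{0,1,\infty\}$, using the chain rule for the ramification of a composition. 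So the whole problem reduces to the first stage, carried out over $L = \Q[x]/(P)$.

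For the first stage I would follow the standard (constructive) proof of Belyi's theorem, but keeping track of the parameter $x$. The point set $B = \{-2g, 0, 1, \ldots, 2g-1\}$ is already rational, so I first apply the affine map $z \mapsto (z+2g)/(4g-1)$ (defined over $\Q$, hence over $L$) carrying $B$ into $\Q \cap [0,1]$ and $a = x$ to some point $x' \in L$. Then one repeatedly applies maps of the shape $t \mapsto c_{m,n}\, t^m (1-t)^n$ with $m+n = \deg$ chosen so that the rational number with smallest denominator among the current finite set of ``bad'' values is pushed into $\{0,1\}$ while no new bad values outside $\{0,1,\infty\}$ are created; each such step is defined over $\Q$. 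After finitely many such steps all the rational points of the transformed $B$ are sent to $\{0,1,\infty\}$, and there remains a single possibly-irrational value, namely the image $x''$ of $x'$, lying in $L$. Since $x'' \in L$ is algebraic of some degree $d$ over $\Q$, I finish by the usual trick: choose a polynomial $Q \in \Q[t]$ whose set of critical values is $\{0,1\}$ (e.g. built from the minimal polynomial of $x''$ by the Belyi-lemma construction on its roots together with $\infty$, so that $Q(x'') \in \{0,1,\infty\}$ and all critical values of $Q$ lie in $\{0,1,\infty\}$), and set $h_x$ to be the composite of all these maps. Every map used is a polynomial or affine transformation with coefficients in $\Q$, hence $h_x$ and therefore $F_x$ is a rational function over $L$; and because the construction used only $P$ (through the minimal polynomial of the successive images of $x$) and rational maps, it specializes correctly: for each root $a$ of $P$, substituting $x \mapsto a$ turns $F_x$ into exactly the function $F_a$ obtained by running the same construction over $\C$ starting from $a$, which is then a Belyi function for $C_a$. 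This is the functoriality assertion.

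The step I expect to be the main obstacle — or at least the one requiring care — is verifying that the reduction from $L$-coefficients down to $\Q$-coefficients genuinely works \emph{uniformly} in the root $a$: one must arrange the construction so that the polynomials $Q$, the exponents $(m,n)$, and the normalizing constants depend only on $P$ (equivalently on the Galois orbit of $a$) and not on a chosen embedding $L \hookrightarrow \C$, so that a single rational function $F_x$ over $L$ does the job for all conjugates simultaneously. This is exactly why one tracks the \emph{minimal polynomial over $\Q$} of each successive image of $x$ rather than the complex value itself; the Belyi algorithm applied to the (finite, $\Q$-rational) set of all conjugates of that image, together with $\{0,1,\infty\}$, produces a single $Q \in \Q[t]$ collapsing the whole orbit. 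Once this uniformity is in place, the specialization statement is immediate, and the composition with $\pi_x$ handled in the first paragraph completes the proof.
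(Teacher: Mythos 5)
Your proposal is correct and takes essentially the same route as the paper: compose the hyperelliptic double cover with a Belyi map on $\PP^1$ assembled entirely from $\Q$-rational data (the minimal polynomial $P$ to dispose of the algebraic branch point, then a collapsing of the remaining rational critical values into $\{0,1,\infty\}$), so that the whole construction specializes uniformly to every root $a$ of $P$. The paper's implementation differs only in detail --- it applies $P$ first and then collapses all rational critical values in a single step via $g(t)=\Pi_i(t-r_i)^{Ny_i}$ (checked by a Lagrange-interpolation computation) rather than iterating maps $t\mapsto t^m(1-t)^n$ --- and since you defer the algebraic point to the end, you should also make sure your final polynomial $Q$ keeps $\{0,1,\infty\}$ inside $\{0,1,\infty\}$, which the standard Belyi lemma does arrange.
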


 \begin{proof}
 Let $f_x \colon C_x \rightarrow \mathbb{P}^1_L$ be the hyperelliptic involution, branched in $\{-2g, 0,1,
\ldots , 2g-1, x \}$. Then $P \circ f_x$ has as critical values:

\begin{itemize}
\item the images of the critical values of $f_x$ under $P$, which are $\in \mathbb{Q}$,
\item the critical values $y$ of $P$, i.e. the zeroes of the discriminant $h_1(y)$ of $P(z) - y $ with respect 
to the variable $z$.
\end{itemize}

$h_1$ has degree $deg(P) - 1$, whence, inductively as in \cite{belyi},  we obtain $\tilde{f}_x := h \circ P \circ f$ whose
critical values are all contained in $\Q \cup \{ \infty \}$ (see
\cite{wolfart} for more details).
 If we take any root $a$ of $P$, then obviously
$\tilde{f}_a$ has the same critical values.

Let now $r_1, \ldots , r_n \in \Q$ be the (pairwise distinct) finite critical values of
$\tilde{f}_x$. We set:
$$ y_i := \frac{1}{\Pi_{j \neq i} (r_i - r_j)} \ .
$$

Let $N \in \N$ be a positive integer such that $m_i:=Ny_i \in \mathbb{Z}$. Then we have that the rational
function
$$ g(t):= \Pi_i (t-r_i)^{m_i} \in \mathbb{Q}(t)
$$ is ramified at most in $\infty$ and $r_1, \ldots r_n$. In fact, $g'(t)$ vanishes at most when $ g(t)= 0$ or at the points
where the
logarithmic derivative $ G(t) : = \frac{g'(t)}{g(t)} = \sum_i m_i (\frac{1}{t-r_i})$ has a zero. However, $G(t)$ has simple poles
at the $n$ points $r_1, \dots, r_n$  and by the
choice made we claim that  it has a zero of order $n$ at $\infty$. 

In fact, consider the polynomial $\frac{1}{N} G(t) \Pi_i (t - r_i)$, which has degree $\leq n -1 $ and equals
$$ \sum_i y_i  \Pi_{j \neq i}(t - r_j) = \sum_i  \Pi_{j \neq i}(t - r_j)\frac{1}{\Pi_{j \neq i} (r_i - r_j)} .$$
It takes value $1$ in each of the points $r_1, \ldots r_n$, hence it equals the constant $1$.

It follows that the critical values of $g \circ \tilde{f}_x$ are at most $0, ~ \infty, ~ g(\infty)$.

We set $F_x : = \Phi \circ g \circ \tilde{f_x}$ where $\Phi$ is the affine map $ z \mapsto 
 g(\infty)^{-1} z$, so that the critical values of $F_x$ are equal to $ \{ 0,1,\infty\}$.
 It is obvious by our construction that for any root
$a$ of $P$,
$F_a$ has the same critical values as $F_x$, in particular, $F_a$ is a Belyi function for $C_a$.

\end{proof}

Since in the sequel we shall consider the normal closure  (we prefer here, to avoid confusion, not to use the
term 'Galois closure' for the geometric setting) $ \psi_a \colon D_a \to \PP^1_{\C}$  of each of the functions
$F_a \colon C_a \to \PP^1_{\C}$, we recall in the next section the `scheme theoretic' construction of the normal
closure.

\section{Effective construction of normal closures}\label{nc}

In this section we consider algebraic varieties  over the complex numbers, endowed with their Hausdorff
topology, and, more generally,
 `good' covering spaces (i.e., between topological spaces which are locally arcwise  connected and semilocally
simply connected).

\begin{lem} Let $\pi \colon X \rightarrow Y$ be a finite `good'  unramified covering space of degree $d$ 
between connected spaces $X$ and $Y$. 

Then the normal closure $Z$ of $\pi \colon X \rightarrow Y$
(i.e., the minimal unramified covering of $Y$ factoring through $\pi$, and such that there exists
an action of a  finite group $G$  with $ Y = Z  / G$)  is
isomorphic to any connected component of 
$$W : = W_{\pi}:= (X \times _Y \ldots \times _Y X) \backslash \Delta \subset X^d
\backslash \Delta ,$$ where $\Delta := \{(x_1, \ldots, x_d) \in X^d  | 
 \exists  i\neq j ~, \   x_i =  x_j\}$ is the big diagonal.
\end{lem}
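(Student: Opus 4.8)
The plan is to exhibit $W = W_\pi$ as an unramified covering of $Y$ in two ways — via the first projection $X^d \to X$ composed with $\pi$, and via the symmetric-group action — and then to identify a connected component of $W$ with the normal closure $Z$. First I would note that since $\pi$ is a finite good covering of degree $d$, the fibre product $X \times_Y \cdots \times_Y X$ ($d$ factors) is a good covering of $Y$ of degree $d^d$, being a composite of pullbacks of $\pi$; removing the closed-and-open (in the étale topology over $Y$) big diagonal $\Delta$ leaves $W$, which is again a good covering of $Y$, now of degree $d! \cdot (\text{number of ``frames''})$ — more precisely, the fibre of $W$ over $y \in Y$ is the set of orderings $(x_1,\dots,x_d)$ of the $d$-element fibre $\pi^{-1}(y)$, so $W \to Y$ has degree $d!$. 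The symmetric group $\FS_d$ acts on $W$ over $Y$ by permuting coordinates, freely (because on $W$ all coordinates are distinct) and transitively on each fibre; hence $W/\FS_d = Y$.

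Next I would pass to a connected component $Z$ of $W$. Let $G \subset \FS_d$ be the stabilizer of $Z$ (the subgroup of permutations preserving this component); since $\FS_d$ permutes the components of $W$ and $W/\FS_d = Y$ is connected, $\FS_d$ acts transitively on $\pi_0(W)$, so $Z \to Y$ has degree $|G|$, the $\FS_d$-action restricts to a free $G$-action on $Z$, and $Z/G = Y$. Thus $Z \to Y$ is a connected normal (Galois) covering with group $G$. Moreover $Z$ factors through $\pi$: the first-coordinate projection $\mathrm{pr}_1 \colon W \to X$ restricts to $Z \to X$, and composing with $\pi$ gives the structure map $Z \to Y$; so $Z$ dominates $X$ over $Y$. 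It remains to check minimality, i.e. that any connected normal covering $Z' \to Y$ factoring through $\pi$ dominates $Z$. Given $Z' \to X \to Y$ with $Z' \to Y$ normal of group $G'$, the $d$ distinct lifts $X \to Z'$... — more carefully, I would argue at the level of monodromy: choosing a base point $y_0$ and writing $H \le \pi_1(Y,y_0)$ for the (conjugacy class of) subgroup corresponding to $\pi$, the covering $\pi$ has index $d$, and $W$ corresponds to the action of $\pi_1(Y,y_0)$ on orderings of $\pi_1(Y,y_0)/H$, whose components correspond to orbits; the component $Z$ through a chosen ordering corresponds precisely to the subgroup $\bigcap_{g} gHg^{-1}$, the normal core of $H$, which is exactly the subgroup defining the normal closure of $\pi$. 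This gives $Z \cong$ normal closure, with the asserted $G \cong \pi_1(Y,y_0)/\mathrm{core}(H)$.

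The main obstacle I anticipate is purely the bookkeeping of connected components: one must be careful that $W$ need not be connected (indeed it typically is not), that $\FS_d$ permutes its components transitively, and that the stabilizer subgroup $G$ of a chosen component is well-defined only up to conjugacy in $\FS_d$ — which is harmless since different components give isomorphic coverings. A secondary point to handle cleanly is the translation between the ``scheme-theoretic'' fibre-product description and the monodromy/covering-space description; this is where invoking the standard Galois theory of covering spaces (valid for good spaces, as set up at the start of the section) does the real work, and I would lean on it rather than reprove it. Everything else — that composites and pullbacks of good coverings are good, that $\Delta$ is open and closed over $Y$, that the $\FS_d$- and $G$-actions are free on $W$ and $Z$ respectively — is routine and I would dispatch it quickly.
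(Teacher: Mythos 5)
Your proposal is correct and follows essentially the same route as the paper: both identify the fibre of $W$ over $y_0$ with the set of orderings of $\pi^{-1}(y_0)$ (i.e.\ with $\mathfrak S_d$), observe that the monodromy acts by translation so that the connected components correspond to the cosets of the monodromy group $G$, and conclude that each component is a Galois cover of $Y$ with group $G$, corresponding to the normal core of the subgroup defining $\pi$. You spell out the coordinate-permutation action of $\mathfrak S_d$ and the minimality check a bit more explicitly than the paper does, but the underlying argument is the same.
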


\begin{proof}
 Choose base points $x_0 \in X$, $y_0 \in Y$ such that $\pi (x_0) = y_0$ and denote by $F_0 $ the
fibre over $y_0$, $F_0 : = \pi^{-1}(\{y_0 \})$.

We consider the monodromy $\mu \colon \pi_1(Y, y_0)
\rightarrow
\mathfrak{S}_d = \mathfrak{S}( F_0)$ of the unramified covering $\pi$. The monodromy of $\phi \colon W
\ra Y$ is induced by the diagonal product monodromy $\mu^d \colon \pi_1(Y, y_0) \ra
\mathfrak{S}( F_0^d)$,  such that, for  $(x_1, \ldots , x_d) \in F_0^d$, we have $\mu^d(\gamma) (x_1,
\ldots , x_d) = (\mu(\gamma)(x_1), \ldots ,
\mu(\gamma)(x_d))$.

It follows that the monodromy of $\phi \colon W
\ra Y$,  $  \mu_W \colon \pi_1(Y, y_0) \ra
\mathfrak{S}( \mathfrak{S}_d )$ is given  by left translation
$\ \mu_W (\gamma) (\tau) = \mu (\gamma)\circ (\tau) $.

If we denote by $G := \mu ( \pi_1(Y, y_0) )\subset \mathfrak{S}_d $ the monodromy group, it follows right
away that  the components of $W$ correspond  to the cosets $G \tau$ of  $G$. Thus all the components
yield   isomorphic covering spaces.

\end{proof}

The theorem of Grauert and Remmert (\cite{g-r}) allows to extend the above construction to yield normal
closures of morphisms between normal algebraic varieties.

\begin{cor}\label{ncram} Let $\pi \colon X \rightarrow Y$ be a finite morphism between normal projective varieties, let
$B \subset Y$ be the branch locus of $\pi$ and set $X^0 := X \setminus \pi^{-1}(B)$,
$Y^0 := Y \setminus B$. 

If $X$  is connected, then the normal closure $Z$ of $\pi$ is isomorphic to any connected component of the
closure of $W^0 := (X^0 \times _{Y^0} \ldots \times _{Y^0} X^0) \backslash \Delta$ in the normalization
$W^n$  of $ W : = \overline{(X \times _Y \ldots \times _Y X) \setminus \Delta}$.

\end{cor}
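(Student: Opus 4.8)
The plan is to reduce Corollary \ref{ncram} to the Lemma by the standard device of removing the branch locus, applying the topological statement there, and then re-compactifying and normalising. First I would set $X^0 := X \setminus \pi^{-1}(B)$ and $Y^0 := Y \setminus B$; since $\pi$ is finite and $B$ is the branch locus, the restriction $\pi^0 \colon X^0 \to Y^0$ is a finite unramified covering of the (connected, locally arcwise connected, semilocally simply connected) complex variety $Y^0$, and $X^0$ is connected because $X$ is normal and connected and we have removed a closed analytic subset of codimension $\geq 1$. Hence the Lemma applies verbatim to $\pi^0$: the normal closure $Z^0 \to Y^0$ is isomorphic to any connected component of $W^0 := (X^0 \times_{Y^0} \cdots \times_{Y^0} X^0) \setminus \Delta$, and this component carries a free action of the monodromy group $G$ with quotient $Y^0$, together with a $G$-equivariant factorisation of $\pi^0$.

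Next I would promote this to the projective setting. By the Grauert--Remmert extension theorem (\cite{g-r}), the finite \'etale covering $Z^0 \to Y^0$ extends uniquely to a finite morphism $Z \to Y$ with $Z$ normal projective, and the $G$-action extends as well, still with $Y = Z/G$; this $Z$ is by construction the normal closure of $\pi$ (minimality is inherited from the minimality of $Z^0$ over $Y^0$, since any normal variety finite over $Y$ factoring $\pi$ restricts over $Y^0$ to an \'etale covering factoring $\pi^0$). It remains to identify $Z$ with a connected component of the normalisation of $W := \overline{(X \times_Y \cdots \times_Y X) \setminus \Delta}$. The fibre product $X \times_Y \cdots \times_Y X$ is projective but possibly non-normal and reducible; its open subset lying over $Y^0$ is exactly $W^0$, which is a disjoint union of copies of $Z^0$ (one per coset $G\tau$), hence smooth over $Y^0$ and in particular normal. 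Therefore $W^0$ is an open dense subset of $W$ contained in the normal locus, and the normalisation $W^n \to W$ restricts to an isomorphism over $W^0$. Each connected component of $W^n$ is then a normal projective variety containing a connected component of $W^0 \cong Z^0$ as a dense open subset, finite over $Y$; by the uniqueness in Grauert--Remmert each such component must coincide with $Z$. Conversely the closure in $W^n$ of a component of $W^0$ is such a component, so $Z$ is isomorphic to any connected component of (the closure of $W^0$ in) $W^n$, as claimed.

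The main obstacle I expect is the bookkeeping at the boundary: one must make sure that $W^0$ really is the full preimage of $Y^0$ in $W$ (so that no extra components or lower-dimensional junk over $Y^0$ appears), that taking the closure of $W^0$ inside $W^n$ indeed picks out whole connected components of $W^n$ rather than proper subvarieties, and that the $G$-action extends across $\pi^{-1}(B)$ compatibly with the normalisation — all of which are clean once one invokes Grauert--Remmert, but which require the normality of $W^0$ and the density of $W^0$ in $W$ to be stated carefully. A secondary point is to check connectedness of $X^0$: this uses that $X$ is normal (so irreducible once connected) and that $\pi^{-1}(B)$ is a proper closed analytic subset, so removing it does not disconnect $X$. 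Everything else is a routine transcription of the Lemma's monodromy description through the extension theorem.
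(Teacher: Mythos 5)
Your proof is correct and follows essentially the same route as the paper: restrict to the \'etale locus over $Y^0$, apply the Lemma there, and then pass back to the projective setting via Grauert--Remmert and normality, identifying the normal closure with (the closure of) a component of $W^0$ inside $W^n$. The paper is merely terser -- it extends the $G$-action from $Z^0$ to the component $Z$ of $W^n$ directly ``by normality'' rather than first building the extension abstractly and then matching it by uniqueness, but this is the same argument.
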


\begin{proof}
The irreducible components of $W$ correspond to the connected components of $W^0$, as well as to the
connected components $Z$ of $W^n$.  So, our component $Z$ is the closure of a connected component
$Z^0$ of $W^0$. We know that the monodromy group $G$ acts on $Z^0$ as a group of covering
transformations and simply transitively on the fibre of $Z^0$ over $y_0$: by normality the action  extends
biholomorphically to $Z$, and clearly $ Z / G \cong Y$.

\end{proof}

\section{Faithful action of the absolute Galois group on the set of marked triangle curves  (associated to very special hyperelliptic curves)}

Let $a$ be an algebraic number, $g \geq 3$, and consider as in section \ref{1} the hyperelliptic curve
$C_a$ of genus $g$ defined by the equation
$$ w^2 = (z-a) (z + 2g) \Pi_{i=0}^{2g-1} (z-i) .$$  Let $F_a \colon C_a \rightarrow \PP^1$ be the Belyi function
constructed in proposition \ref{Belyi} and denote by $\psi_a \colon D_a \rightarrow \PP^1$ the normal closure
of $C_a$ as in corollary \ref{ncram}.

\begin{oss}  
\begin{enumerate} 
\item We denote by $G_a$ the monodromy  group of $D_a$ and observe that there is a subgroup
$H_a \subset G_a$ acting on $D_a$ such that $D_a /H_a \cong C_a$.
\item Observe moreover  that the degree $d$ of the  Belyi function $F_a$ depends not only on the degree of the
field etension $ [ \Q (a) : \Q]$, but much more on the height of the algebraic number $a$; 
 one may give an upper bound for the order of the group $G_a$ in terms of these.
 \end{enumerate}
\end{oss}

The pair $(D_a, G_a)$ that we get is a so called triangle curve, according to  the following definition
(see \cite{isogenous}):

\begin{df}\label{marked}
\begin{enumerate}
\item
A {\em marked variety} is a triple $(X, G, \eta)$ where $X$ is a projective variety and $\eta \colon  G \ra \Aut(X)$ is an injective
homomorphism
\item
equivalently, a marked variety is a triple $(X, G, \alpha)$ where $\alpha \colon X \times G \ra X$ is  an effective action of the group $G$ on $X$
\item  Two marked  varieties  $(X,G, \alpha)$, $(X',G', \alpha')$ are said to be {\em isomorphic} if there are isomorphisms $ f \colon X \ra X'$,
 and $\psi \colon G \ra G'$ transporting the action $\alpha \colon X \times G \ra X$ into the action $\alpha' \colon X' \times G' \ra X'$, 
 i.e., such that $$ f \circ \alpha = \alpha' \circ ( f \times \psi) \Leftrightarrow   \eta ' \circ \psi = \Ad(f) \circ \eta, \ \ 
 \Ad (f) (\phi) : = f \phi f^{-1}.$$
  \item
  Is $G $ defined as a subset of $\Aut(X)$, then the natural marked variety is the triple $(X,G,i)$, where $ i \colon G \ra \Aut(X)$
  is the inclusion map, and shall sometimes be denoted simply by the pair $(X,G)$.
\item  A marked curve  $(D,G, \eta)$ consisting of a smooth projective curve of genus $g$ and an effective action of the group $G$ on $D$
is said to be a {\em marked triangle curve of genus $g$} if $ D / G \cong \PP^1$ and the quotient morphism
 $p \colon D \ra  D / G \cong \PP^1$ is branched in three points.

 \end{enumerate}
\end{df} 

\begin{oss}
Observe that:

1)  we have a natural action of $\Aut(G)$ on marked varieties, namely $$ \psi (X,G,\eta) : = (X, G, \eta \circ \psi^{-1}).$$

2) the action of the group $\Inn(G)$ of inner automorphisms does not change the isomorphism class of $(X,G,\eta)$ since, for $\ga \in G$,  we may set 
$ f : =  (\eta(\ga))$, $\psi : = \Ad (\ga)$, and then  $\eta \circ \psi = \Ad(f) \circ \eta$, since 
$ \eta ( \psi (g)) = \eta ( \ga  g \ga^{-1} ) = \eta ( \ga) \eta( g ) (\eta(\ga)^{-1} ) = \Ad(f) (\eta(g)) $.

3) In the case where $ G = \Aut(X)$, we see that $\Out(G)$ acts simply transitively on the isomorphism classes of
the $\Aut(G)$-orbit of $  (X,G,\eta)$.
\end{oss}

 Consider now our triangle curve $D_a$: without loss of generality we may assume that the three branch points in $\PP^1$
 are $\{ 0,1, \infty\}$ and we may choose a monodromy representation
$$\mu \colon \pi_1(\PP^1 \setminus \{0, 1, \infty \}) \rightarrow G_a$$

  corresponding to the normal ramified
covering $\psi_a \colon D_a \rightarrow \PP^1$.   Denote further by $ \tau_0, ~ \tau_1, ~ \tau_{\infty}$
 the images of geometric loops around $0, ~ 1, ~
\infty$ . Then we have that
$G_a$ is  generated by $ \tau_0, ~ \tau_1, ~ \tau_{\infty}$ and
$\tau_0 \cdot \tau_1 \cdot \tau_{\infty}= 1$. By Riemann's existence theorem the datum of these
three generators of the group $G_a$ determines a marked triangle curve (see \cite{isogenous}, 
 \cite{bcg}).

We can phrase our previous considerations in a theorem, after  preliminarily observing:

\begin{oss}
1) $\s \in \Aut (\C)$ acts on $ \C [z_0, \dots z_n]$,
by sending $P (z) = \sum_{I=(i_0, \dots, i_n)} \ a_I z ^I
\mapsto  \s  (P) (z) : = \sum_{I=(i_0, \dots, i_n)} \ \s  (a_I) z ^I$.

2) Let $X$ be  a projective variety
$$X  \subset  \PP^n_\C,  X : = \{ z | f_i(z) = 0 \  \forall i \}.$$
The action of $\s $ extends coordinatewise to $ \PP^n_\C$,
and carries $X$ to the set $\sigma (X)$ which is another variety, denoted $X^{\s }$,
and called the {\em conjugate variety}. In fact, since $f_i(z) = 0 $ implies
$\s  (f_i)(\s  (z) )= 0 $, one has that
$$  X^{\s }  = \{ w | \s  (f_i)(w) = 0 \ \forall i \}.$$
3) Likewise, if $f  \colon X\ra Y$ is a morphism, its graph $\Ga_f$ is a subscheme of $ X \times Y$, hence we get a conjugate
morphism $f^{\s } \colon X^{\s } \ra Y^{\s }$. 

4) Similarly, if $ G \subset \Aut(X)$, and $ i  \colon G \ra \Aut(X)$ is the inclusion, then $\s$ determines
another marked variety 
$(X^{\s },  G , \Ad (\s) \circ i )$, image of $(X, G, i)$. 

In other  words, we have $ G^{\s }  \subset \Aut (X^{\s } )$ in such a way that, if we identify
$G$ with $ G^{\s }$ via $ \Ad (\s) $, then $ (X / G)^{\s } \cong X^{\s }/ G.$

\end{oss}

\begin{teo}\label{markedtriangle} To any algebraic number $a \notin \Z$ there corresponds, through a canonical procedure (depending on an integer $g \geq 3$), a marked triangle curve $(D_a, G_a)$.

This correspondence yields a faithful action of the  absolute Galois group $\Gal(\bar{\Q} /\Q)$ on the  set of isomorphism classes
of marked triangle curves.
\end{teo}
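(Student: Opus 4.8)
The plan is to combine the construction of the previous sections with the rigidity of triangle curves to transport a potential Galois-action trivially-acting $\sigma$ down to a contradiction coming from Proposition~\ref{a=b}.

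First I would make the correspondence $a \mapsto (D_a, G_a)$ precise: starting from the very special hyperelliptic curve $C_a$ of Section~\ref{1}, apply Proposition~\ref{Belyi} to obtain the Belyi function $F_a \colon C_a \to \PP^1$, and then Corollary~\ref{ncram} to obtain the normal closure $\psi_a \colon D_a \to \PP^1$ together with its monodromy group $G_a \subset \FS_d$; by Riemann's existence theorem the triple of generators $(\tau_0, \tau_1, \tau_\infty)$ with $\tau_0\tau_1\tau_\infty = 1$ determines $(D_a,G_a)$ as a marked triangle curve. Next I would verify the equivariance: because the whole construction (hyperelliptic equation, the polynomial $P$, the iterated Belyi normalization $\tilde f_x$, the rational function $g$ with rational coefficients, the affine normalization $\Phi$) is defined over $\Q$ and is functorial under the specialization $x \mapsto a$, applying $\sigma \in \Gal(\bar\Q/\Q)$ to all the defining equations sends $C_a$ to $C_{\sigma(a)}$, $F_a$ to $F_{\sigma(a)}$, and hence — since forming the normal closure is itself a purely algebraic operation, compatible with conjugation as recorded in the Remark on conjugate varieties — sends $(D_a, G_a)$ to $(D_{\sigma(a)}, G_{\sigma(a)})$ as marked triangle curves. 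This gives a well-defined action of $\Gal(\bar\Q/\Q)$ on isomorphism classes of marked triangle curves (restricting attention to those arising from our construction is enough, or one extends the action to all marked triangle curves via the standard Galois action on dessins).

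Then I would prove faithfulness. Suppose $\sigma$ acts trivially, i.e. $(D_a, G_a) \cong (D_{\sigma(a)}, G_{\sigma(a)})$ as marked triangle curves for every algebraic $a \notin \Z$. The key point is that one can recover $C_a$ from the marked triangle curve $(D_a,G_a)$ in a way that only depends on the isomorphism class: indeed $C_a \cong D_a/H_a$ where $H_a \subset G_a$ is the distinguished subgroup (the stabilizer, under the monodromy action, of a point in the fibre — intrinsically characterized, e.g. as the unique conjugacy class of subgroups realizing the hyperelliptic Belyi quotient of the prescribed genus and ramification type). An isomorphism of marked triangle curves $(D_a,G_a) \to (D_{\sigma(a)}, G_{\sigma(a)})$ carries $H_a$ to $H_{\sigma(a)}$, hence induces an isomorphism $C_a \cong C_{\sigma(a)}$. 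Now choose $a \in \bar\Q \setminus \Q$ with $\sigma(a) = b \neq a$ (possible for any $\sigma \neq \id$); then $C_a \cong C_b$ with $a \neq b$, contradicting Proposition~\ref{a=b}(1). Therefore only $\sigma = \id$ acts trivially, and the action is faithful.

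The main obstacle is the middle step: carefully establishing that the normal-closure construction commutes with the Galois action, i.e. that $D_a^\sigma \cong D_{\sigma(a)}$ together with $G_a^\sigma \cong G_{\sigma(a)}$ \emph{as a marked curve}, and that the distinguished subgroup $H_a$ is transported correctly. This requires knowing that the scheme-theoretic normal closure of Corollary~\ref{ncram} — formed via fibre products and normalization — is defined over the same field as the morphism $F_a$ (hence over the number field $L = \Q[x]/(P)$) and is functorial under field automorphisms, so that conjugating by $\sigma$ literally replaces the root $a$ of $P$ by the root $b = \sigma(a)$. Once that compatibility is in place, the recovery of $C_a$ from $(D_a,G_a)$ and the appeal to Proposition~\ref{a=b} are essentially formal.
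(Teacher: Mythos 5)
Your overall architecture coincides with the paper's: define the action by conjugating the pair (curve, graph of the action), use the functoriality of the constructions of Proposition~\ref{Belyi} and Corollary~\ref{ncram} over the number field $L=\Q[x]/(P)$ to get $(D_a,G_a)^{\s}\cong(D_{\s(a)},G_{\s(a)})$, recover $C_a=D_a/H_a$, and conclude with Proposition~\ref{a=b}. The one step that would not survive scrutiny is your claim that $H_a$ is intrinsically characterized inside the marked triangle curve as ``the unique conjugacy class of subgroups realizing the hyperelliptic Belyi quotient of the prescribed genus and ramification type'', so that an \emph{arbitrary} isomorphism of marked triangle curves automatically transports $H_a$ to $H_{\s(a)}$. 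No such uniqueness is proved, and it is false in general: non-conjugate subgroups of the same index can have the same permutation character (Gassmann equivalence), hence yield quotient curves with identical genus and identical branching data over $\{0,1,\infty\}$. Since the entire content of the theorem is to distinguish $a$ from $\s(a)$ by means of $H_a$, this is precisely the point where the argument cannot be waved through.

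The paper needs no intrinsic characterization. It proves instead (Lemma~\ref{H}) that the \emph{specific} identification $\Ad(\s)\colon G_a\ra G_{\s(a)}$, obtained by conjugating the whole tower $D_x\ra C_x\ra\PP^1_K$ defined over the splitting field $K$ and specialized at $x\mapsto a$, carries $H_a$ to $H_{\s(a)}$ --- this is exactly the functoriality you isolate in your last paragraph. Then, because an isomorphism $f$ of the marked curves $(D_a,G_a,i_a)$ and $(D_a^{\s},G_a,\Ad(\s)\circ i_a)$ satisfies $\Ad(f)=\Ad(\s)$ up to an inner automorphism (which does not move the conjugacy class of $H_a$), $f$ itself descends to an isomorphism $D_a/H_a\cong D_{\s(a)}/H_{\s(a)}$, i.e.\ $C_a\cong C_{\s(a)}$, and Proposition~\ref{a=b} finishes the proof. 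You should replace the uniqueness claim by this two-step argument; note that Proposition~\ref{innerOK} makes explicit that the conclusion is already delicate once the twisting automorphism $\psi$ is merely outer --- which is exactly why the unmarked case (Theorem~\ref{isogenous}) requires the much heavier machinery of Propositions~\ref{center}--\ref{noAbeliankernel}.
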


\begin{proof}
Let $(D,G)$ be a marked triangle curve,  and $\sigma \in \Gal(\bar{\Q} /\Q)$: extend  $\sigma$ 
to  $\sigma \in \Gal(\C /\Q)$ and take the transformed curve $D^{\s}$ and the transformed
graph of the action, a subset of  $D^{\s}\times D^{\s} \times G$.

Since there is only a finite number of isomorphism classes of such pairs $(D,G)$ of  a fixed genus $g$
and with fixed group $G$, it follows that $D$ is defined over $\bar{\Q}$ and the chosen extension
of $\s$ does not really matter.

Finally, apply the action of $\s$ to the triangle curve $(D_a, G_a)$ and assume that 
the   isomorphism class of $(D_a, G_a)$ is fixed
 by the action. This means then, setting $ b : = \s (a)$,  that there is an isomorphism
$$f \colon D_a \ra D_b = D_{\s (a)} = D_a^{\s}$$ 
such that $ \Ad (f) = \Ad(\s )$.

In other words, $\s$ identifies $G_a$ with $G_b$ by our assumption and the two actions of
$G_a$ on $D_a$ and $D_b$ are transported to each other by $f$.

It suffices  to show that under the isomorphism $f$ the subgroup $H_a$ corresponds to the subgroup
$ H_{b}$ (i.e., $\Ad (f) (H_a) = H_b$). 

Because then we conclude that, since $C_a = D_a / H_a$, $C_b = D_b / H_b$, $f$ induces an isomorphism 
of  $C_a$ with  $C_{b}$.

And then  by proposition \ref{Belyi} we conclude that $a = b$.

We use now that $ \Ad (f) = \Ad(\s )$, so it suffices to show the following

\begin{lem}\label{H}
 $\Ad (\s ) (H_a) = H_b$.
\end{lem}
{\em Proof of the Lemma.}

Let $K$ be the Galois closure of the field $L$
(= splitting field of the field extension $\Q \subset L$), and view $L$ as embedded in $\C$ under the
isomorphism sending $x \mapsto a$.

Consider the curve $\hat{C}_x$  obtained from $C_x$ by scalar extension $\hat{C}_x : = C_x \otimes_L K$.
Let also $\hat{F}_x : = F_x \otimes_L K$ the corresponding Belyi function with values in $\PP^1_K$.

Apply now the effective construction of the normal closure of  section \ref{nc}: hence, taking a connected
component of  
$ (\hat{C}_x \times _{\PP^1_K} \ldots \times _{\PP^1_K} \hat{C}_x) \setminus \Delta $  we obtain  a
curve $D_x$ defined over $K$.

Note that $D_x$ is not geometrically irreducible, but, once we tensor with $\C$, it splits into several
components which are Galois conjugate and which are isomorphic to the conjugates of $D_a$.

Apply now the Galois automorphism $\sigma$ to the triple $D_a \to C_a \to \PP^1$. Since the triple is
induced by the triple  $D_x \to C_x \to \PP^1_K$ by taking a tensor product $ \otimes_K \C$ via the
embedding sending $ x \mapsto a$, and the morphisms are induced by the composition of the inclusion $D_x
\subset (C_x)^d$ with the coordinate projections, respectively by the fibre product equation, it follows
from proposition \ref{Belyi} that $\sigma$ carries the triple $D_a \to C_a \to \PP^1$ to the triple $D_b \to
C_b \to \PP^1$.

\hfill{\em QED.}

\end{proof}
If we want to interpret our argument in terms of Grothendieck's \'etale fundamental group, we  define
$C^0_x : = F_x^{-1} (\PP^1 \setminus \{0,1,\infty \})$, and accordingly  $\hat{C}^0_x$ and $D^0_x$.

There are the following  exact sequences for the  Grothendieck \'etale fundamental group (compare
Theorem 6.1 of \cite{sga1}):
$$ 1 \ra \pi_1^{alg} (D^0_a) \ra  \pi_1^{alg} (D^0_x) \ra \Gal (\bar{\Q}/ K) \ra 1$$
$$ 1 \ra \pi_1^{alg} (C^0_a) \ra  \pi_1^{alg} (C^0_x) \ra \Gal (\bar{\Q}/ K)  \ra 1$$
$$ 1 \ra \pi_1^{alg} (\PP^1_{\C}  \setminus \{0,1,\infty \}) \ra  
\pi_1^{alg} (\PP^1_{K}  \setminus \{0,1,\infty \}) \ra \Gal (\bar{\Q}/ K) \ra 1$$

where $H_a$ and $G_a$ are the respective factor groups for the (vertical) inclusions of the left hand sides,
corresponding to the first and second sequence, respectively  to the first and third sequence.

On the other hand, we also have the exact sequence
$$ 1 \ra \pi_1^{alg} (\PP^1_{\C}  \setminus \{0,1,\infty \}) \ra  
\pi_1^{alg} (\PP^1_{\Q}  \setminus \{0,1,\infty \}) \ra \Gal (\bar{\Q}/ \Q) \ra 1.$$

The finite quotient $G_a$ of $\pi_1^{alg} (\PP^1_{\C}  \setminus \{0,1,\infty \})$ (defined over $K$) is
sent by $\sigma \in \Gal (\bar{\Q}/ \Q)$ to another quotient, corresponding to $D_{\sigma (a)}$, and the
subgroup $H_a$, yielding the quotient
$C_a$, is sent to the subgroup  $H_{\sigma (a)}$.

\begin{oss}
Assume  that the  two triangle curves $D_a $ and $D_b = D_{\s (a)}$ are isomorphic
 through a complex  isomorphism  $ f  \colon D_a \ra D_b$ (but  without
that   necessarily $ ( f, \Ad (\s ))$ yields an isomorphism of
marked triangle curves $ (D_a, G_a) $, $ (D_b, G_b) $).
 
We define $ \psi : G_a \ra G_a$ to be equal to
$ \psi : =  \Ad ( \s ^{-1} \circ f)$.

Then $\Ad (f) = \Ad (\s ) \circ \psi $ and applying to $y \in D_b, y = f(x)$ we get
$$ \Ad (f)  (g) (y) =  (\Ad (\s ) \circ \psi)(g) (y) \Leftrightarrow  f (g (x)) =  (\Ad (\s ) \circ \psi)(g) (f(x)).$$
 
Identifying  $G_a$ with $G_b$ under  $ \Ad (\s)$, one can interpret the above formula as asserting
that $f $ is only `twisted' equivariant ($ f ( g (x)) "="  \psi (g) (f (x))$.
\end{oss}

\begin{prop}\label{innerOK}
Assume  that the  two triangle curves $D_a$ and $D_b = D_{\s (a)}$ are isomorphic under a
complex  isomorphism $f$
and that the above automorphism 
$\psi \in \Aut (G)$ such that $ f ( g (x)) =  (\Ad (\s ) \circ \psi)(g) (f(x))$ is inner. 

Then $C_a \cong C_b$, hence $ a = b$.

\end{prop}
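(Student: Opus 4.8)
The plan is to reduce to the untwisted case already settled inside the proof of Theorem \ref{markedtriangle}, by absorbing the inner twist $\psi$ into the isomorphism $f$. The guiding observation is that the ambiguity $\Inn(G)$ in the marking is precisely the part that is realised by honest automorphisms of $D_a$ coming from the $G_a$-action itself, so it can never obstruct descent to the intermediate curve $C_a$.

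Concretely, I would write $\psi = \Ad(\delta)$ for some $\delta \in G_a$ (using the identification $G \cong G_a$ of the preceding Remark), and replace $f$ by $f' := f \circ \eta_a(\delta^{-1}) \colon D_a \to D_b$, where $\eta_a \colon G_a \hookrightarrow \Aut(D_a)$ is the marking, so that $\eta_a(\delta^{-1})$ is a genuine complex automorphism of $D_a$. Then $\Ad(\eta_a(\delta^{-1}))$ preserves $\eta_a(G_a)$ and, under the identification with $G_a$, restricts to $\Ad(\delta^{-1}) = \psi^{-1}$; combined with the relation $\Ad(f) = \Ad(\sigma) \circ \psi$ on $G_a$ recorded in the preceding Remark, a one-line computation gives $\Ad(f')|_{G_a} = \Ad(\sigma)\circ\psi\circ\psi^{-1} = \Ad(\sigma)$. (I would double-check on which side to compose and the sign of the exponent — composing instead on the left with a suitable element of $G_b$ works just as well — but this is purely formal bookkeeping.) Thus $(f', \Ad(\sigma))$ is now an honest isomorphism of marked triangle curves $(D_a, G_a) \to (D_b, G_b)$.

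With this in hand I would invoke the proof of Theorem \ref{markedtriangle} verbatim: since $\Ad(f') = \Ad(\sigma)$ on $G_a$, Lemma \ref{H} gives $\Ad(\sigma)(H_a) = H_b$, hence $\Ad(f')(H_a) = H_b$, so $f'$ conjugates the $H_a$-action to the $H_b$-action and therefore descends to an isomorphism $C_a = D_a/H_a \to D_b/H_b = C_b$. Finally, because $b = \sigma(a)$ and $a \in \bar{\Q}\setminus\Q$ by the standing assumption, Proposition \ref{a=b}(1) forces $a = b$.

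The step I expect to be most delicate is the second paragraph: one must produce $\Ad(f') = \Ad(\sigma)$ \emph{exactly}, not merely up to a further inner automorphism, since that precise identity — and not just "twisted equivariance" — is what the proof of Theorem \ref{markedtriangle} consumes through Lemma \ref{H}. It is also worth stressing why the innerness of $\psi$ is essential here: for a general $\psi \in \Aut(G)$ there is no reason that $\Ad(f)(H_a) = H_b$ (as $H_a$ need not be normal, so $\psi$ need not preserve it), whereas an inner $\psi$ can be pulled off $f$ by an automorphism of $D_a$, after which $H_a$ is carried to $H_b$ automatically.
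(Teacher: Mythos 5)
Your proposal is correct and follows essentially the same route as the paper: the paper's proof consists precisely of the observation that an inner twist can be absorbed (this is Remark 3.5(2), that precomposing the marking with an inner automorphism does not change the isomorphism class of the marked curve), so that $(D_a,G_a,i_a)$ and $(D_b,G_a,\Ad(\sigma)\circ i_a)$ become isomorphic as marked triangle curves, after which the argument of Theorem \ref{markedtriangle} (via Lemma \ref{H}) is invoked verbatim. You have merely made the absorption explicit by replacing $f$ with $f\circ\eta_a(\delta^{-1})$ and verifying $\Ad(f')=\Ad(\sigma)$, which is exactly the computation underlying that remark; your closing appeal to Proposition \ref{a=b}(1) is also the correct reference.
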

\begin{proof}
If $\psi$ is inner, then the marked triangle curves $(D_a, G_a)=  (D_a, G_a, i_a)$ ($ i_a$ being the inclusion map 
of $G_a \subset \Aut (D_a)$), and its transform by $\s$,
$(D_b, G_a^{\s}) = (D_b, G_a, \Ad (\s) \circ i_a)$ are isomorphic.

Then the argument of   theorem \ref{markedtriangle} implies that $C_a \cong C_b$, hence $ a = b$.

\end{proof}

We pose here the following conjecture, which is a strengthening of the previous theorem \ref{markedtriangle}

\begin{conj}\label{unmarked}(Conjecture 2.13 in \cite{catbumi})
The absolute Galois group $\Gal(\bar{\Q} /\Q)$
acts faithfully
on the  set of isomorphism classes of (unmarked) triangle curves.
\end{conj}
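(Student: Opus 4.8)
The plan is to combine the argument of Theorem~\ref{markedtriangle} with the twisting technique used for Theorem~\ref{isogenous}, so as to absorb the $\Out(G)$-indeterminacy that appears once the marking is forgotten. Let $\frak K \subset \Gal(\bar\Q/\Q)$ be the kernel of the action on isomorphism classes of unmarked triangle curves, and fix $\sigma \in \frak K$. Fix $g \geq 3$ and $a \in \bar\Q \setminus \Q$, and put $b := \sigma(a)$; by hypothesis $D_a$ and $D_b = D_a^\sigma$ are isomorphic as unmarked triangle curves, so there is a complex isomorphism $f \colon D_a \to D_b$ with $\Ad(f)(G_a) = G_b$. (If one adopts the even weaker notion in which only the underlying curve $D_a$ is remembered, one must first check that $G_a$ is a characteristic subgroup of $\Aut(D_a)$; I would establish this beforehand by taking $g$, hence the degree of the Belyi function $F_a$, large enough that the normal closure $D_a$ acquires no automorphisms beyond $G_a$ --- a Hurwitz-type analysis of the specific covering $\psi_a$ which is plausible but not automatic.) Identifying $G_a$ with $G_b$ via $\Ad(\sigma)$, the isomorphism $f$ is then twisted-equivariant for $\psi := \Ad(\sigma^{-1}\circ f) \in \Aut(G_a)$, as in the remark preceding Proposition~\ref{innerOK}; dividing out by precomposition with $G_a \subset \Aut(D_a)$ makes $\psi$ well defined modulo $\Inn(G_a)$, so one obtains a class $[\psi_\sigma(a,g)] \in \Out(G_a)$.

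By Proposition~\ref{innerOK}, if $[\psi_\sigma(a,g)] = 1$ for all $a$ and $g$, then $C_a \cong C_b$ for all such $a$, whence $\sigma(a) = a$ for all $a \in \bar\Q \setminus \Q$ by Proposition~\ref{a=b}(1), i.e. $\sigma = \id$ (in particular $\gc \notin \frak K$). Hence the conjecture follows once one proves that $\sigma \mapsto \big([\psi_\sigma(a,g)]\big)_{a,g}$ is an injective homomorphism from $\frak K$ into an \emph{abelian} group: then $\frak K$ is abelian and normal in $\Gal(\bar\Q/\Q)$, hence trivial by the theorem of \cite{F-J} that $\Gal(\bar\Q/\Q)$ has no nontrivial normal abelian subgroup, which is precisely faithfulness.

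The main obstacle is exactly this last point. Verifying a cocycle identity of the shape $[\psi_{\sigma\tau}(a,g)] = [\psi_\sigma(\tau a,g)]\cdot \Ad(\sigma)_*[\psi_\tau(a,g)]$ and repackaging the target as a genuine $\Gal(\bar\Q/\Q)$-module so that this becomes additive is routine bookkeeping, formal from composing twisted-equivariant isomorphisms. The genuine difficulty is that $\Out(G_a)$ need not be abelian: unlike the case of surfaces isogenous to a product, where the freedom to twist the \emph{second} factor of a product $C_1 \times C_2$ forces the relevant classes into the centre $Z(\Out(G))$, here there is no second factor and no evident reason why $[\psi_\sigma(a,g)]$ should be central. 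Overcoming this seems to require one of two routes: (i) associate to $D_a$, in a canonical and $\Gal(\bar\Q/\Q)$-equivariant way, an auxiliary surface (or pair of curves) carrying a product-like structure, run the argument of Theorem~\ref{isogenous} there, and descend the conclusion back to $D_a$; or (ii) analyse directly the monodromy groups $G_a$ arising from the normal closures of the $F_a$ and show that any automorphism $\psi$ compatible with the three-point branching datum of $\psi_a \colon D_a \to \PP^1$ is necessarily inner. I expect route (i) to be the more tractable, and the construction of such an auxiliary object to be the crux of the matter.
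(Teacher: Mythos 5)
You should first be aware that the statement you were asked to prove is not proved in the paper at all: it is posed there as an open problem (Conjecture \ref{unmarked}, i.e.\ Conjecture 2.13 of \cite{catbumi}), explicitly described as a \emph{strengthening} of Theorem \ref{markedtriangle}. So there is no proof in the paper to compare yours against, and your proposal has to stand on its own. It does not: as you yourself concede, the decisive step is missing. The reduction is sound as far as it goes --- the class $[\psi_\sigma(a,g)]\in\Out(G_a)$ is well defined modulo the caveat below, Proposition \ref{innerOK} shows that its vanishing for all $a$ forces $\sigma(a)=a$ for all $a$, and the cocycle bookkeeping is the same as in the proof of Proposition \ref{Abeliankernel} --- but the whole strategy hinges on mapping the kernel $\mathfrak{K}$ injectively into an \emph{abelian} group so that Proposition \ref{noAbeliankernel} (via \cite{F-J}) can be invoked, and you give no argument for that. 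In the paper's proof of Theorem \ref{isogenous} the centrality $[\psi_1]\in Z(\Out(G))$ of Proposition \ref{center} is extracted precisely from the freedom to twist the action on the \emph{second} factor $C_2$ of a product $C_1\times C_2$ by an arbitrary $\lambda\in\Aut(G)$, which yields $\psi_2\circ\lambda=\lambda\circ\psi_1$ for all $\lambda$ and hence forces $\psi_1$ into the centre. A single triangle curve admits no such auxiliary twist; your routes (i) and (ii) are only named, not carried out, and route (ii) would amount to showing that every automorphism of $G_a$ compatible with the branching datum is inner, for which there is no reason to hope in general. This is exactly the obstruction that keeps the unmarked statement a conjecture rather than a theorem.

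A secondary, also unresolved, point: once the marking is forgotten, an abstract isomorphism $f\colon D_a\to D_b$ of curves need not satisfy $\Ad(f)(G_a)=G_b$ unless $G_a$ is characteristic in $\Aut(D_a)$; the paper sidesteps the analogous issue elsewhere by passing to \emph{fully marked} triangle curves $(D_a,\Aut(D_a))$ (Definition \ref{fullmarked}, Theorem \ref{fullmarkedtr}), which is still a marked notion. You flag this and propose a Hurwitz-type analysis for large $g$, but do not carry it out. In summary: your proposal is an honest and correctly organized reduction of the conjecture to the centrality problem, but it is not a proof, and the paper does not contain one either.
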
 

The following definition will be useful in the proof of theorem \ref{isogenous}.
\begin{df}\label{fullmarked} Let $a \in \bar{\Q}$ be an algebraic number and let $(D_a, G_a)$ be the associated marked triangle curve obtained by the canonical procedure above (depending on an integer $g \geq 3$). Then $(D_a, \Aut(D_a))$ is called the {\em fully marked} triangle curve associated to $a$.
\end{df}
\begin{oss}
If we consider instead of $(D_a, G_a)$ the fully marked triangle curve $(D_a, \Aut(D_a))$ we have also the subgroup $H_a \leq \Aut(D_a)$ such that $D_a / H_a = C_a$, where $C_a$ is the very special hyperelliptic curve associated to the algebraic number $a$. 
\end{oss}

The same proof as the proof of theorem \ref{markedtriangle} gives

\begin{teo}\label{fullmarkedtr}
To any algebraic number $a \notin \Z$ there corresponds, through a canonical procedure (depending on an integer $g \geq 3$), a  fully marked triangle curve $(D_a, \Aut(D_a))$.

This correspondence yields a faithful action of the  absolute Galois group $\Gal(\bar{\Q} /\Q)$ on the  set of isomorphism classes
of fully marked triangle curves.

\end{teo}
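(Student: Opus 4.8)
The plan is to rerun the proof of Theorem \ref{markedtriangle} almost word for word, with the full automorphism group $\Aut(D_a)$ in the rôle previously played by the monodromy group $G_a$, checking that no step of that proof used that the marking was exactly $G_a$. First I would fix the action: for $\sigma \in \Gal(\bar{\Q}/\Q)$, extended arbitrarily to $\Gal(\C/\Q)$, put $\sigma \cdot (D,\Aut(D)) := (D^{\sigma},\Aut(D^{\sigma}))$, the identification of markings being $\Ad(\sigma)\colon \Aut(D) \ra \Aut(D^{\sigma})$. As in Theorem \ref{markedtriangle}, for fixed $g$ there are only finitely many triangle curves of genus $g$, so each such $D$ is defined over $\bar{\Q}$ and the chosen extension of $\sigma$ is immaterial; hence the action is well defined on isomorphism classes. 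Recall also that the canonical procedure (based on Sections \ref{1} and \ref{nc}) attaches to $a \in \bar{\Q}\setminus\Z$, besides the fully marked triangle curve $(D_a,\Aut(D_a))$, the distinguished subgroup $H_a \subset G_a \subset \Aut(D_a)$ with $D_a/H_a \cong C_a$, where $C_a$ is the very special hyperelliptic curve of Section \ref{1}.

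The heart of the argument is Lemma \ref{H} again, whose proof mentions $G_a$ nowhere: working over the Galois closure $K$ of $\Q(a)$, the tower $D_x \ra C_x \ra \PP^1_K$ is produced by applying the fibre-product construction of Section \ref{nc} and Corollary \ref{ncram} to the Belyi function of Proposition \ref{Belyi}, and therefore specializes $\sigma$-equivariantly under $x \mapsto a$ and $x \mapsto b := \sigma(a)$. Hence $\sigma$ carries the triple $D_a \ra C_a \ra \PP^1$, together with the action of $H_a$ on $D_a$, to the triple $D_b \ra C_b \ra \PP^1$ with the action of $H_b$; in particular $D_a^{\sigma} \cong D_b$ and, inside $\Aut(D_b)$, one has $\Ad(\sigma)(H_a) = H_b$.

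Finally I would prove faithfulness. Let $\sigma \neq \id$; since $\sigma$ fixes $\Q$ pointwise we may pick $a \in \bar{\Q}\setminus\Q$ with $b := \sigma(a) \neq a$, and it suffices to show the class of $(D_a,\Aut(D_a))$ is not fixed by $\sigma$. If it were, there would be a complex isomorphism $f\colon D_a \ra D_b$ realizing the fixing, and, exactly as in the remark preceding Proposition \ref{innerOK}, the failure of $f$ to be $\sigma$-equivariant would be measured by $\psi := \Ad(\sigma^{-1}\circ f) \in \Aut(\Aut(D_a))$, giving $\Ad(f)(H_a) = \Ad(\sigma)\bigl(\psi(H_a)\bigr)$ while $\Ad(\sigma)(H_a) = H_b$ by the second paragraph. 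The one nontrivial point is that the fixing of the fully marked triangle curve forces $\psi$ to carry $H_a$ into its own $\Aut(D_a)$-conjugacy class — automatic when $\psi$ is inner (cf. Proposition \ref{innerOK}), and in general the place where one uses that the fully marked triangle curve still remembers the three labelled branch points of $\psi_a$ and the permutation representation of the monodromy on the fibre of $F_a$ (equivalently, the $\Aut(D_a)$-conjugacy class of $H_a$), all of which are Galois-equivariant by the second paragraph and invariant under isomorphisms of fully marked triangle curves. Granting this, $\Ad(f)(H_a)$ is conjugate to $H_b$ in $\Aut(D_b)$, so $f$ descends to an isomorphism $C_a = D_a/H_a \cong D_b/H_b = C_b$, and Proposition \ref{a=b} forces the contradiction $a = b$; hence the action is faithful. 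I expect this last point about $\psi$ — the same delicate issue already isolated in Proposition \ref{innerOK}, namely transporting the distinguished subgroup $H_a$ along a complex isomorphism which a priori has nothing to do with $\sigma$ — to be the main obstacle; every other step is a line-by-line repetition of the proof of Theorem \ref{markedtriangle}.
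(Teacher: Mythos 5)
Your overall architecture is exactly the paper's: the paper gives no separate argument for Theorem \ref{fullmarkedtr} beyond the sentence ``the same proof as the proof of theorem \ref{markedtriangle} gives'', and your first two paragraphs (well-definedness of the action via the finiteness of triangle curves of fixed genus, and the $\s$-equivariance of the tower $D_x \ra C_x \ra \PP^1_K$, i.e.\ Lemma \ref{H} with $\Aut(D_a)$ in place of $G_a$) are correct line-by-line transcriptions. The problem is the one step you yourself isolate and then ``grant''. You claim that an isomorphism of fully marked triangle curves must preserve the $\Aut(D_a)$-conjugacy class of $H_a$ because the fully marked curve ``remembers the three labelled branch points and the permutation representation of the monodromy on the fibre of $F_a$''. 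It does not: $H_a$ is extra data attached to $a$ by the canonical procedure, not recoverable from the pair $(D_a,\Aut(D_a))$, and an abstract curve isomorphism $f\colon D_a\ra D_b$ automatically conjugates $\Aut(D_a)$ onto $\Aut(D_b)$ with no control whatsoever on where it sends $H_a$. If your claim were available, the same argument would show that $D_a\cong D_b$ as unmarked curves already forces $a=b$, i.e.\ you would essentially have proved Conjecture \ref{unmarked}, which the paper leaves open.

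The actual resolution is definitional, not geometric. An isomorphism between $(D_a,\Aut(D_a),i_a)$ and its transform $(D_b,\Aut(D_b),\Ad(\s)\circ i_a)$ is, in the paper's operative notion of isomorphism of marked varieties (see items 2) and 3) of the remark following Definition \ref{marked}: inner twists do not change the class, and for $G=\Aut(X)$ the group $\Out(G)$ acts simply transitively on the isomorphism classes in the $\Aut(G)$-orbit), an $f$ intertwining the two markings via $\Ad(\s)$ up to an inner automorphism only. So your $\psi=\Ad(\s^{-1}\circ f)$ is inner by hypothesis, and Proposition \ref{innerOK} --- equivalently, $\Ad(f)(H_a)$ is conjugate to $\Ad(\s)(H_a)=H_b$ by Lemma \ref{H} --- yields $C_a\cong C_b$ and hence $a=b$ by Proposition \ref{a=b}, with no appeal to the monodromy being remembered. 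Once you replace the justification of your granted step by this one, your write-up coincides with the paper's intended proof.
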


\section{Connected components of moduli spaces associated to very special hyperelliptic curves}

Fix now an integer $ g \geq 3$, and  another integer $g' \geq 2$.

Consider now   all the algebraic numbers $ a \notin \Q$ and all the possible smooth complex curves
$C'$ of genus $g' $, observing  that the fundamental group of $C'$ is  isomorphic to the standard group 
$$\pi_{g'} : = \langle \alpha_1,
\beta_1, \ldots ,
\alpha_{g'}, \beta_{g'} | \Pi_{i=1}^{g'} [ \alpha_{i}, \beta_{i}] = 1 \rangle.$$

Since $g' \geq 2$ and $G_a$ is $2-$generated  there are plenty of 
epimorphisms  (surjective homomorphisms) $ \mu \colon \pi_{g'} \ra  G_a$.
For instance it suffices to consider the epimorphism 
$\theta \colon \pi_{g'} \ra \F_{g'}$ from $\pi_{g'}$ to the free group $\F_{g'}:=<\lambda_1, \ldots ,
\lambda_{g'}>$ in $g'$ letters given by $\theta(\alpha_i) = \theta(\beta_i) = \lambda_i$, $\forall$ $1
\leq i \leq g'$, and to 
 compose $\theta$ with the surjection $\phi \colon \F_{g'} \rightarrow G_a$, given by $\phi (\lambda_1) =
\tau_0$, $\phi(\lambda_2) = \tau_1$, and $\phi (\lambda_i)= 1$ for $3\leq i \leq g'$.

Consider all the possible  epimorphisms   $ \mu \colon \pi_{g'} \ra  G_a$. Each such $\mu$ gives a
normal unramified covering $D' \rightarrow C'$ with  monodromy   group $G_a$.

Let us recall now the basic definitions underlying our next construction:  the theory of
surfaces  isogenous  to a product, introduced in \cite{isogenous}(see also
\cite{cat03}), and which  holds more generally  for 
varieties isogenous to a product.

\begin{df} 
\begin{enumerate}
\item A surface {\em isogenous to a (higher) product}
is a compact complex projective surface $S$ which
 is  a quotient
$S = (C_1 \times C_2) /G$ of a product of curves of resp. genera 
$g_1, g_2 \geq 2$ by the
free action of a finite group $G$. It is said to be {\em unmixed} if the embedding
$ i \colon G \ra \Aut(C_1 \times C_2)$ takes values in the subgroup (of index at most two) $ \Aut(C_1) \times \Aut(C_2)$.
\item  A  {\em Beauville  surface}   is a surface  isogenous to a (higher) product
which is {\em rigid}, i.e., it has no nontrivial deformation. This 
amounts to the condition,
in the unmixed case,  that  $ (C_i, G)$ is a triangle curve.
\item An {\em \'etale marked surface} is a triple  $(S',  G, \eta)$ such that the action of $G$ is fixpoint free.
An  \'etale marked surface
can also be defined as a quintuple $(S, S', G, \eta, F)$ 
where $ \eta \colon G \ra \Aut(S')$ is an effective free action, and $F  \colon S \ra S'/G$
is an  isomorphism.
\end{enumerate}
\end{df}

Observe that a surjection of the fundamental group $r \colon \pi_1 (S, y) \ra G$
determines an \'etale marked surface. Once a base point $y$ is fixed, then the marking
provides the desired surjection $r$. Moving  the base point $y$ around amounts to
replacing $r$ with the composition $ r \circ \Ad (\ga)$, for $\ga \in \pi_1 (S, y)$. However,
$$  (r \circ \Ad (\ga) )(\delta) = r (\ga \delta \ga^{-1}) = (\Ad ( r (\ga)) \circ r )(\delta). $$

Therefore we see that we have to divide by the group $\Inn (G)$ acting on the left.

In this case the associated subgroup of the covering is a normal subgroup, hence uniquely determined,
independently of the choice of a base point above $y$; however, the corresponding isomorphism of the
quotient group with $G$ changes, and as a result 
 the epimorphism $r$ is modified by an inner automorphism of $G$.
 On the other hand the action of nontrivial elements in $\Out (G) : = \Aut(G) / \Inn (G)$ may transform
  the marking into a non isomorphic one.

\begin{oss}
Consider the  coarse moduli space $\frak M_{x,y}$ of canonical models of surfaces of general 
type $X$ with $\chi (\hol_X) = x, K^2_X = y$.  Gieseker (\cite{gieseker}) proved that $\frak M_{x,y}$ is a quasi-projective variety.

We denote by $\frak M$ the disjoint union $\cup_{x,y \geq 1} \frak M_{x,y}$, and we call it the {\em moduli space of
surfaces of general type}.

Fix a finite group $G$ and consider the moduli space $\hat{ \frak M}^G_{x,y}$ for  \'etale marked surfaces 
$(X, X', G, \eta, F)$, where the isomorphism class $[X] \in \frak M_{x,y}$.

This moduli space $\hat{ \frak M}^G_{x,y}$ is empty if there is no  surjection  $r \colon \pi_1 (X, y) \ra G$,
otherwise we obtain that $\hat{ \frak M}^G_{x,y}$ is a finite \'etale covering space of $\frak M_{x,y}$
with fibre over $X$ equal to the quotient set  
$$ \Epi ( \pi_1 (X, y), G) / \Inn (G) .$$ 

By the theorem of Grauert and Remmert ( \cite{g-r}) $\hat{ \frak M}^G_{x,y}$ is a quasi-projective variety.

\end{oss}
Recall the following result concerning  surfaces isogenous to a product  (\cite{isogenous},
\cite{cat03}):

\begin{teo}
Let $S = (C_1 \times C_2) /G$ be a surface isogenous to a product. 
Then any surface $X$ with the
same topological Euler number and the same fundamental group as $S$
is diffeomorphic to $S$. The corresponding subset of the moduli space
$\frak M^{top}_S = \frak M^{diff}_S$, corresponding to surfaces homeomorphic,
resp, diffeomorphic to $S$, is either irreducible and connected
or it contains
two connected components which are exchanged by complex
conjugation.
\end{teo}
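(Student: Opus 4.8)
The plan is to reprove this along the lines of \cite{isogenous} (see also \cite{cat03}), in three stages: recognize $\pi_1(S)$ group-theoretically, recover a product structure on a canonical \'etale cover, and then analyse the resulting locus in the moduli space. Throughout write $\Gamma := \pi_1(S)$ and let $N \triangleleft \Gamma$ be the normal subgroup of index $|G|$ attached to the \'etale cover $C_1 \times C_2 \ra S$, so that $N \cong \pi_{g_1} \times \pi_{g_2}$ (a product of two surface groups) and $\Gamma/N \cong G$. The first step I would carry out is the purely group-theoretic assertion that $N$ is intrinsic to $\Gamma$: it is the unique normal subgroup of minimal finite index which is a direct product of two surface groups of genera $\geq 2$. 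Granting this, for any surface $X$ with $\pi_1(X) \cong \Gamma$ the subgroup $N$ is recovered, producing a canonical connected \'etale Galois cover $\widehat X \ra X$ with group $G$ and $\pi_1(\widehat X) \cong \pi_{g_1} \times \pi_{g_2}$; moreover $e(\widehat X) = |G|\, e(X) = |G|\, e(S) = e(C_1 \times C_2) = 4(g_1-1)(g_2-1)$.

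Next I would invoke the structure theorem for K\"ahler surfaces whose fundamental group is a product of surface groups. The two projections $\pi_1(\widehat X) \ra \pi_{g_i}$ are, by the fibration theorems for K\"ahler manifolds, induced by holomorphic fibrations $f_i \colon \widehat X \ra C_i'$ onto smooth curves of genus $g_i$, and the Euler-number equality above forces, by the non-negativity of the singular-fibre and ramification contributions to $e$, the map $(f_1,f_2) \colon \widehat X \ra C_1' \times C_2'$ to have degree one, hence to be an isomorphism of these minimal surfaces of general type. Since $\widehat X \ra X$ is \'etale, the deck group $G$ acts freely on $\widehat X \cong C_1' \times C_2'$, so $X = (C_1' \times C_2')/G$ is again a surface isogenous to a product with the same group $G$; in particular $\chi(\hol_X)$ and $K^2_X$ agree with those of $S$ and $X$ is of general type.

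Finally I would use that a surface isogenous to a product is determined, up to deformation and orientation-preserving diffeomorphism, by a discrete datum: for $i=1,2$, the genus of $C_i'/G$, the branch orders, and the class of the orbifold epimorphism onto $G$, subject to the freeness of the diagonal action. For a fixed such datum the corresponding surfaces form a connected subset of $\frak M$ — a finite quotient of a product of Teichm\"uller spaces of the quotient orbifolds, which are cells — and, since being isogenous to a product is open and closed in $\frak M$, this subset is a full connected component of $\frak M$, all of whose members have $\pi_1 \cong \Gamma$ and hence are homeomorphic to $S$. A homeomorphism $X \ra S$ induces an isomorphism $\pi_1(X) \cong \Gamma$ carrying $N$ to $N$, thus matching the \'etale covers and the two fibration structures; if it is orientation preserving the datum of $X$ equals that of $S$, so $[X]$ lies in the component of $[S]$, while if it is orientation reversing the datum of $X$ is the complex conjugate of that of $S$, so $[X]$ lies in the component of $[\overline S]$. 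Therefore $\frak M^{top}_S$ is the union of these at most two components, which are exchanged by complex conjugation and coincide precisely when the datum of $S$ is conjugation-equivalent to itself. Since deformation-equivalent surfaces are diffeomorphic and $\overline S$ is diffeomorphic to $S$, this set also equals $\frak M^{diff}_S$.

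The main obstacle is twofold: the group-theoretic canonicity of $N$ inside $\Gamma$ in the first stage, and the degree-one statement for $(f_1,f_2)$ in the second — the latter is exactly the point where the hypothesis on the Euler number is genuinely needed, since without it $(f_1,f_2)$ could be a nontrivial branched cover and $X$ would not have to be isogenous to a product. The mixed case, in which $G$ exchanges the two factors of the product, is dealt with by applying the same argument to the index-two unmixed subgroup of $\Gamma$.
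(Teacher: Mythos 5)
The paper itself does not prove this statement: it is recalled verbatim from \cite{isogenous} and \cite{cat03} and used as a black box, so the only possible comparison is with the proofs in those sources. Your three-stage reconstruction — intrinsic recovery of $N\cong\pi_{g_1}\times\pi_{g_2}$ as the unique normal subgroup of minimal finite index that is a product of two surface groups, the K\"ahler fibration theorems plus the Euler-number equality $e(\widehat X)=4(g_1-1)(g_2-1)$ forcing $(f_1,f_2)$ to be an isomorphism, and the Teichm\"uller-theoretic connectedness of the fixed-datum locus — is exactly the architecture of those proofs, and you correctly isolate the two hard technical inputs (canonicity of the minimal realization, the degree-one statement) instead of glossing over them.

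There is, however, one genuine slip, and it sits precisely at the delicate point of \cite{cat03}: the claim that an orientation-preserving homeomorphism $X\to S$ forces the datum of $X$ to \emph{equal} that of $S$. This proves too much. Complex conjugation $S\to\overline{S}$ is an orientation-preserving diffeomorphism (the complex dimension is $2$), so your dichotomy would place $[\overline{S}]$ in the connected component of $[S]$ in every case, i.e.\ it would show that $\frak M^{top}_S$ is always irreducible — contradicting the second alternative of the theorem, which does occur and is the very phenomenon \cite{cat03} exhibits. The correct bookkeeping is factor-wise: a homeomorphism of product type $\phi_1\times\phi_2$ preserves the orientation of the $4$-manifold if and only if $\phi_1$ and $\phi_2$ \emph{both} preserve or \emph{both} reverse the orientations of the curve factors. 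Hence an oriented homeomorphism yields either the datum of $S$ or its \emph{simultaneous} conjugate $(\bar\rho_1,\bar\rho_2)$ — these give the (at most) two components, exchanged by complex conjugation — while flipping exactly one factor corresponds to an orientation-reversing homeomorphism of the $4$-manifold, which is why $\frak M^{top}_S$ must be read as the locus of surfaces \emph{orientedly} homeomorphic to $S$ (consistently with the first sentence of the theorem asserting only "diffeomorphic", not "orientedly diffeomorphic", for the a priori four orientation-variants of the datum). With this correction the rest of your argument goes through.
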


If $S$ is a Beauville surface (i.e., $S$ is rigid) this implies: $X \cong S$ or $X \cong \bar{S}$.
It follows also that a Beauville surface is defined over $\bar{\Q}$,
whence the Galois group $\Gal (\bar{\Q} / \Q)$ acts on the
discrete subset of the moduli space $\mathfrak M$ of surfaces of general type  corresponding
to Beauville surfaces.

It is tempting to make the following
\begin{conj}(Conjecture 2.11 in \cite{catbumi})
The absolute Galois group $\Gal (\bar{\Q}  /\Q)$ acts
faithfully on the
discrete subset of the moduli space $\mathfrak M$ of surfaces of general type corresponding
to Beauville surfaces.

\end{conj}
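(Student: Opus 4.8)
\emph{Proof proposal.} Since Beauville surfaces are rigid, each of them occupies a single point (or a complex-conjugate pair of points) of $\frak M$, so the statement to be proved is: if $\sigma \in \Gal(\bar\Q /\Q)$ satisfies $S^{\sigma} \cong S$ for \emph{every} Beauville surface $S$, then $\sigma = \id$. The plan is to imitate the proof of Theorem~\ref{isogenous}, but to force \emph{both} factors of the product $C_1 \times C_2$ to be triangle curves, so that the resulting surfaces isogenous to a product are rigid, while keeping a canonical grip on the algebraic number $a$. Thus one fixes $g \geq 3$ and, for $a \in \bar\Q \setminus \Q$, takes the marked triangle curve $(D_a,G_a)$ of Theorem~\ref{markedtriangle}, together with the monodromy triple $(\tau_0,\tau_1,\tau_{\infty})$ generating $G_a$; note that $D_a$ has genus $\geq g \geq 3$ since it covers $C_a$, so it is eligible as a factor.

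The device to pass from triangle curves to Beauville surfaces would be to enlarge the group. Choose, by a recipe depending only on the triple of local orders $(\mathrm{ord}\,\tau_0,\mathrm{ord}\,\tau_1,\mathrm{ord}\,\tau_{\infty})$, a finite group $B_a$ carrying a Beauville structure with two generating triples whose ramification orders are divisible by those of $(\tau_0,\tau_1,\tau_{\infty})$, and set $\widehat G_a := G_a \times B_a$. Taking the product of the corresponding triples one obtains two generating triples of $\widehat G_a$; a short check, using the divisibility of orders together with the disjointness of ramification coming from the Beauville structure of $B_a$, shows that the associated fixed-point sets in $\widehat G_a$ meet only in the identity, so the diagonal action on the product of the two triangle curves for $\widehat G_a$ is free and yields a Beauville surface $S_a$ defined over $\bar\Q$. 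Moreover $D_a$ (with its $G_a$-marking) is recovered from $S_a$ as a canonical sub-quotient (project to the $B_a$-free quotient of the appropriate factor, using essential uniqueness of the product–quotient presentation, \cite{isogenous}, and Krull--Schmidt). Since the ramification type of the Belyi dessin is a Galois invariant, the assignment $a \mapsto S_a$ is equivariant: applying $\sigma$ sends $S_a$ to $S_{b}$ with $b := \sigma(a)$.

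For faithfulness I would then argue exactly as in the proof of Theorem~\ref{isogenous}. If $\sigma$ fixes every Beauville surface, then $S_a \cong S_{\sigma(a)}$ for all $a$; by the weak rigidity theorem and the essential uniqueness of the presentation this induces an isomorphism $f \colon D_a \to D_{\sigma(a)}$ equivariant up to an automorphism $\psi \in \Aut(G_a)$, i.e.\ $\psi$-twisted equivariant in the sense of the Remark preceding Proposition~\ref{innerOK}. If $\psi$ is inner, Proposition~\ref{innerOK} already gives $a = \sigma(a)$; in general one runs the bookkeeping of the main argument of the paper, whereby the residual ambiguity is measured by an element of an abelian group of the shape $\bigoplus_{G_a} Z(\Out(G_a))$, so that the kernel $\frak K$ of the $\Gal(\bar\Q/\Q)$-action on the Beauville locus embeds into an abelian group; since $\Gal(\bar\Q/\Q)$ has no nontrivial normal abelian subgroup (\cite{F-J}), $\frak K = 1$.

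The step that is not routine, and where I expect the essential difficulty to lie, is the very first one: the existence of the auxiliary groups $B_a$, i.e.\ the realization of every hyperbolic triple of orders (up to taking multiples) by \emph{some} finite Beauville group. For triples of equal order coprime to $6$ the groups $(\Z/n)^2$ suffice, and large alternating or projective linear groups cover a great many further types, but a clean statement valid for all types — in particular handling the finitely many small or $2$- and $3$-divisible ones — is a genuine group-theoretic question that would have to be settled separately; failing that, one could instead try to refine the Belyi construction of Proposition~\ref{Belyi} so as to avoid the exceptional local types, or to enlarge $G_a$ by a non-abelian rather than a direct factor. Any of these routes, I believe, is where the real work remains.
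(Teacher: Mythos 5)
The statement you are trying to prove is not proved in the paper: it is stated there only as a conjecture (Conjecture 2.11 of \cite{catbumi}), introduced by ``it is tempting to make the following''. The authors prove faithfulness on $\pi_0$ of the moduli space of all surfaces of general type precisely because they can let the second factor $C_2$ vary and exploit the whole machinery of \'etale markings, twists by $\Aut(G)$, and the embedding of the kernel into $\prod Z(\Out(\tilde G_a))$; the restriction to the rigid (Beauville) locus is exactly what they cannot yet handle. So your proposal, to be accepted, would have to be a complete self-contained argument, and it is not: you yourself flag the existence of the auxiliary Beauville groups $B_a$ as unsettled, and that alone leaves the proof open.

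Beyond that acknowledged gap there are two further places where the argument as written would fail. First, the product of a generating triple of $G_a$ with a generating triple of $B_a$ need not generate $G_a\times B_a$: by Goursat's lemma it generates the full product only if the two triples induce no common quotient identification, and since $G_a$ is an uncontrolled monodromy group (it can be essentially any $2$-generated finite group, depending on the height of $a$) you cannot choose $B_a$ by a recipe depending only on the three local orders and guarantee this. Second, and more seriously, the claimed canonical recovery of $(D_a,G_a)$ from the \emph{unmarked} Beauville surface $S_a$ does not work as stated: from the isomorphism class of $S_a$ the structure theorem returns the pair of curves and the group $\widehat G_a$ only up to automorphisms of $\widehat G_a$, and an automorphism of a direct product need not preserve the factor $\{1\}\times B_a$ (Krull--Schmidt controls the isomorphism types of indecomposable factors, not the specific subgroup), so the quotient $D_a$ is not a well-defined sub-quotient of the isomorphism class of $S_a$. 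This is the same outer-automorphism ambiguity that forces the paper into the $Z(\Out)$ bookkeeping for Theorem \ref{isogenous}, and here it is compounded by the extra factor $B_a$; your final paragraph assumes this bookkeeping transfers verbatim, but the well-definedness of $\psi_{\sigma,a}$ in the paper rests on using the \emph{fully} marked curve $(D_a,\Aut(D_a))$, and you have not shown that the automorphism group of your $\widehat G_a$-triangle curve is under comparable control. Until these points are resolved the conjecture remains open.
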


\begin{df} Let $\frak N_a$ be the subset of the moduli space of surfaces of general type given by surfaces
isogenous to a product of unmixed type
$S \cong (D_a \times D' )/G_a$, where $D_a, D'$ are as above (and the group $G_a$ acts by the diagonal
action).
\end{df}

From \cite{isogenous} and especially Theorem 3.3 of \cite{cat03} it follows:

\begin{prop} For each $ a \in \bar{\Q}$,  $\frak N_a$ is a union of connected components of the moduli
spaces of surfaces of general type.

Moreover, for $\sigma \in \Gal ( \bar{\Q} / \Q)$, $\sigma (\frak N_a )= \frak N_{\sigma(a)}$.
\end{prop}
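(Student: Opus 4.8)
The plan is to prove the two assertions separately: first that $\frak N_a$ is a union of connected components of the moduli space, and then the $\Gal(\bar\Q/\Q)$-equivariance $\sigma(\frak N_a) = \frak N_{\sigma(a)}$.

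For the first assertion I would invoke the weak rigidity of surfaces isogenous to a product, as recalled in the theorem quoted just above (and established in \cite{isogenous}, with the sharp statement being Theorem 3.3 of \cite{cat03}). The point is that the property of being isogenous to a product, of unmixed type, with fixed group $G_a$ and with the two factors of fixed genera (namely $g(D_a)$, which is determined by $a$ and the integer $g$, and $g'$), is both open and closed in the moduli space: any small deformation of such a surface $S \cong (D_a \times D')/G_a$ is again of this form (openness, from the fact that the product structure and the $G_a$-action deform), and conversely any surface with the same topological type is again isogenous to a product with the same data (closedness / weak rigidity). Thus the locus $\frak N_a$ — which is by definition the union, over all curves $D'$ of genus $g'$ and all the relevant diagonal free $G_a$-actions, of the isomorphism classes $[(D_a \times D')/G_a]$ — is a union of connected components. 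One should note that $D_a$ itself varies in its moduli (it is a single curve up to isomorphism, but $D'$ moves in an irreducible family), so $\frak N_a$ is a finite union of irreducible, hence connected, components; the finiteness comes from the finitely many topological types of $G_a$-action, i.e.\ finitely many orbits of epimorphisms $\pi_{g'} \twoheadrightarrow G_a$ under $\Aut(G_a) \times \mathrm{Map}$.

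For the equivariance, I would argue as follows. Let $\sigma \in \Gal(\bar\Q/\Q)$, extended to $\Gal(\C/\Q)$. Since $\frak N_a$ consists of surfaces defined over $\bar\Q$ (being rigid in the $D_a$-direction and varying only algebraically in $D'$), $\sigma$ acts on the set of their isomorphism classes by $[S] \mapsto [S^\sigma]$. Now apply $\sigma$ to a surface $S = (D_a \times D')/G_a$: conjugation commutes with products and with quotients by finite group actions, so $S^\sigma \cong (D_a^\sigma \times (D')^\sigma)/G_a^\sigma$, where $G_a^\sigma$ is identified with a subgroup of $\Aut(D_a^\sigma) \times \Aut((D')^\sigma)$ via $\Ad(\sigma)$, as spelled out in the Remark preceding Theorem \ref{markedtriangle}. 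By the canonical construction of $D_a$ as the normal closure of the Belyi function $F_a$ (Proposition \ref{Belyi}) together with the functoriality established in the proof of Lemma \ref{H}, one has $D_a^\sigma \cong D_{\sigma(a)}$ as marked triangle curves, i.e.\ the pair $(D_a^\sigma, G_a^\sigma)$ is isomorphic, after identifying $G_a$ with $G_{\sigma(a)}$, to $(D_{\sigma(a)}, G_{\sigma(a)})$. Meanwhile $(D')^\sigma$ is just some other curve of genus $g'$, and the diagonal $G_a$-action on the product goes over to a diagonal $G_{\sigma(a)}$-action. Hence $S^\sigma$ is again of the form $(D_{\sigma(a)} \times D'')/G_{\sigma(a)}$ with $D''$ of genus $g'$, so $[S^\sigma] \in \frak N_{\sigma(a)}$; this shows $\sigma(\frak N_a) \subseteq \frak N_{\sigma(a)}$, and applying the same to $\sigma^{-1}$ gives equality.

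The main obstacle, and the step requiring the most care, is the first assertion: one must make sure that weak rigidity is being applied correctly, namely that \emph{every} surface in a connected component meeting $\frak N_a$ actually lies in $\frak N_a$ — i.e.\ that having the same fundamental group and Euler number as $(D_a \times D')/G_a$ forces a surface to be itself isogenous to a product of \emph{unmixed} type with group $G_a$ and with one factor isomorphic to $D_a$ (not merely to some curve abstractly deformation-equivalent to $D_a$). Here the rigidity of the triangle curve $D_a$ is essential: $D_a$ has no moduli, so the $D_a$-factor cannot degenerate or deform within $\frak N_a$, and this is exactly what Theorem 3.3 of \cite{cat03} delivers. The equivariance part, by contrast, is essentially formal once Lemma \ref{H} and the functoriality of the Belyi construction are in hand.
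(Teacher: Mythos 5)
Your proof follows essentially the same route as the paper's: the first assertion via the rigidity of the triangle curve $(D_a,G_a)$ combined with Theorem 3.3 of \cite{cat03} (and the observation that varying $C'$ and $\mu$ sweeps out all topological types of free $G_a$-actions on the second factor), the second via the functoriality $(D_a)^{\sigma}\cong D_{\sigma(a)}$ of the Belyi/normal-closure construction. Two small slips are worth flagging: $(D')^{\sigma}$ has genus $|G_a|(g'-1)+1$, not $g'$ --- it is the quotient $(D')^{\sigma}/G_a$ that must be shown to have genus $g'$, which the paper checks via $\dim H^0(\Omega^1)^{G_a}$, deducing also the freeness of the conjugate action from Hurwitz' formula; and the surfaces in $\frak N_a$ are \emph{not} all defined over $\bar{\Q}$ (the factor $D'$ has moduli), so the well-definedness of the $\Gal(\bar{\Q}/\Q)$-action on $\pi_0$ is obtained instead, as in the paper, by extending $\sigma$ to $\Aut(\C)$, acting on the Hilbert scheme of $5$-canonical images, and noting that the relevant subscheme is defined over $\Q$, whence the induced permutation of components depends only on the image of $\sigma$ in $\Gal(\bar{\Q}/\Q)$.
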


\begin{proof}
Since $D_a$ is a triangle curve, the pair $ (D_a, G_a)$ is rigid, whereas, varying $C'$ and
$\mu$, we obtain the full union of the moduli spaces for the pairs $(D', G_a)$, corresponding to the
possible free topological actions of the group $G_a$ on a curve $D'$ of genus $ |G_a| (g'-1) + 1$.

Thus, the surfaces $S \cong (D_a \times D' )/G_a$  give, according to the cited theorem 
  3.3 of \cite{cat03},  a union of connected components of the moduli space $\frak M$  of surfaces of general type.

Choose now  a surface  $S$ as above (thus, $[S] \in \frak N_a$) and apply the field automorphism $\s \in \Aut (\C)$
 to a point of the Hilbert scheme corresponding to the 5-canonical image
of $S$ (which is isomorphic to $S$, since the canonical divisor of $S$ is ample). We obtain a surface which
we denote by $  S ^{\sigma}$.

By taking the fibre product of $\sigma$ with  $D_a \times D' \ra S$ it follows that 
$ S ^{\sigma}$ has an \'etale covering with group $G_a$ which is the product
$ (D_a )^{\sigma} \times ( D') ^{\sigma} $.

Recall  that  $(C_a)^{\sigma} = C_{\sigma(a)} $ (since $\sigma (a)$ corresponds to another embedding of the
field $L$ into $\C$), and recall  the established equality for Belyi maps $(F_a) ^{\sigma} = F_{\sigma(a)} $, which implies $(D_a )^{\sigma} =
D_{\sigma(a)} $.

On the other hand, the quotient of $( D') ^{\sigma}$ by the action of the group $G_a$  has genus equal to
the dimension of the space of invariants 
$ \dim (H^0 (\Omega^1_{  ( D') ^{\sigma}})^{G_a}  )$, but this dimension is the same as $g' =  \dim (H^0
(\Omega^1_{  D'})^{G_a}  )$. Hence the action of $G_a$ on $( D') ^{\sigma} $ is also free (by Hurwitz'
formula), and we have shown that $ S^{\sigma}$ is a surface whose moduli point is in
$ \frak N_{\sigma(a)}$.

Finally, since the subscheme  of the Hilbert scheme corresponding to these points is defined over 
$\Q$, it follows that the action on the set of connected components of this subscheme,
which is the set of connected components of the moduli space, depends only on
the image $\sigma \in \Gal ( \bar{\Q} / \Q)$.

\end{proof}

Let us explain the rough idea for our strategy:   for each $\s \in \Gal ( \bar{\Q} / \Q)$ which is nontrivial
we would like to find $ a$ such that, setting $ b : = \s (a) $:

(**) \ \ $ a \neq b$ and $\frak N_a$ and $\frak N_b$ do not intersect.

If (**) holds, then we can easily conclude that
$\s$ acts nontrivially on the set  $\pi_0 ( \frak M)$ of connected components
of  $\frak M$. If (**) does not hold for each $a$, the strategy must  be changed and becomes a little bit more complicated.

Observe that the condition that $\frak N_a$ and $\frak N_b$ intersect  implies,
by the structure theorem for surfaces isogenous to a product, only the weaker statement that
the two triangle curves $D_a$ and $D_b$ are isomorphic.  

In order to resort to the result established in the previous section we first of all 
consider  some connected components of moduli spaces of  \'etale marked surfaces, specifically of   \'etale marked surfaces isogenous to a product.

Let therefore $ S = (C_1 \times C_2)/G$ be a surface isogenous to a product, of unmixed type.
Then the conjugate surface $ S ^{\s}$ has an \'etale cover with group $G$,
and we see that $\s$ acts on the  \'etale marked surface $(S,C_1 \times C_2, G, \eta,F)$ 
carrying it to  $(S^{\s}, C_1^{\s} \times C_2^{\s}, G^{\s}, \eta^{\s},F^{\s})$.
In particular, $ \Ad (\s )$ identifies the group $G$ acting on $C_1 \times C_2$ with the one
acting on $  C_1^{\s} \times C_2^{\s}$.

Now, $S^{\s}$ belongs to the same connected component of $S$, or to its complex
conjugate, if and only if there exists an isomorphism $ \Phi \colon \pi_1(S^{\s}) \ra \pi_1 (S)$.

Identifying $G$ and $G^{\s}$ via $ \Ad (\s )$,  the surjection $r \colon \pi_1 (S) \ra G$,
whose kernel is  $\pi_1 (C_1 \times C_2)$, 
yields a second surjection $ r \circ \Phi \colon \pi_1(S^{\s}) \ra G$, which has  kernel  $\pi_1 (C_1 \times C_2)^{\s}$,
by the unicity of the minimal realization of a surface isogenous to a product (\cite{isogenous}, prop. 3.15).

Hence  $ r \circ \Phi $ differs from the surjection
$r^{\s}$ via an automorphism $ \psi \in \Aut (G)$ such that

$$ r \circ \Phi = \psi \circ  r^{\s}.$$

The condition that moreover we get the same  \'etale marked  surface is that
the automorphism $\psi $ is inner. The above argument yields now the following

\begin{teo} \label{marked isogenous}The absolute Galois group $\Gal(\bar{\Q} /\Q)$ acts faithfully on the  set of connected
components of the (coarse) moduli space of  \'etale marked  surfaces isogenous to a higher product. 
\end{teo}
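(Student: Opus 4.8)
The plan is to combine the faithful action on marked triangle curves (Theorem \ref{markedtriangle}) with the construction of surfaces isogenous to a product built on the triangle curves $D_a$. Given a nontrivial $\sigma \in \Gal(\bar\Q/\Q)$, I would first produce an algebraic number $a \notin \Z$ with $b := \sigma(a) \neq a$ (possible since $\sigma \neq \id$), fix an auxiliary genus $g' \geq 2$, and consider the \'etale marked surfaces of the form $(S, D_a \times D', G_a, \eta, F)$ with $S = (D_a \times D')/G_a$, as in the definition of $\frak N_a$. The key point, already essentially assembled in the paragraphs preceding the statement, is the following: if $S^\sigma$ lies in the same connected component of the moduli space of \'etale marked surfaces as some such $S$, then there is a complex isomorphism $\Phi \colon \pi_1(S^\sigma) \to \pi_1(S)$, and by the uniqueness of the minimal realization of a surface isogenous to a product (prop.\ 3.15 of \cite{isogenous}) the composition $r \circ \Phi$ has kernel $\pi_1(C_1 \times C_2)^\sigma$; hence $r \circ \Phi = \psi \circ r^\sigma$ for some $\psi \in \Aut(G_a)$, and the \'etale marked surfaces agree precisely when $\psi$ is inner.

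The second step is to extract from such an isomorphism, in the inner case, an isomorphism of marked triangle curves. Here I would use the product structure: $S$ and $S^\sigma$ have the canonical pair of fibrations onto $D_a/G_a \cong \PP^1$ and $D'/G_a$, which are intrinsic (the two factors are distinguished, e.g., by the genera $g_1 = |G_a|(g-1)+1$ and $g_2 = |G_a|(g'-1)+1$, which we may arrange to differ by choosing $g, g'$ appropriately). An isomorphism of \'etale marked surfaces therefore respects these fibrations and, passing to the $G_a$-covers, induces an isomorphism of the first factors carrying the $G_a$-action on $D_a$ to the $G_a$-action on $D_{b}$ in a way compatible with $\Ad(\sigma)$ up to the inner automorphism $\psi$. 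In other words, $(D_a, G_a)$ and $(D_b, G_b)$ become isomorphic as marked triangle curves (the inner $\psi$ being harmless by part 2 of the Remark following Definition \ref{marked}). By Theorem \ref{markedtriangle} this forces $a = b$, a contradiction. Therefore $S^\sigma$ lies in a different connected component (or its complex conjugate) from every $S \in \frak N_a$ whose marking is transported to the given one — and more precisely, $\sigma$ moves the connected component of the marked moduli space containing $(S, D_a \times D', G_a, \eta, F)$.

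To finish, I would observe that the subscheme of the relevant Hilbert scheme parametrising these \'etale marked surfaces is defined over $\Q$, so that the action of $\sigma$ on its set of connected components — which is the set of connected components of the coarse moduli space of \'etale marked surfaces isogenous to a higher product — depends only on the image of $\sigma$ in $\Gal(\bar\Q/\Q)$; this is the same descent argument as in the Proposition on $\frak N_a$ above. Combining the previous two paragraphs, if $\sigma \neq \id$ then $\sigma$ does not fix the connected component attached to $a$, which proves faithfulness.

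The main obstacle is the rigorous passage from ``$S$ and $S^\sigma$ are in the same component of the \emph{marked} moduli space'' to ``$(D_a,G_a)$ and $(D_b,G_b)$ are isomorphic \emph{as marked triangle curves}'': one must be careful that the isomorphism $\Phi$ of fundamental groups, which a priori is only topological, is realised by an actual biholomorphism respecting the two fibrations and the group action, and that the marking datum $\eta$ (an isomorphism onto $G_a$, not just an abstract $G_a$-action) is exactly what pins down the identification of $G_a$ with $G_a^\sigma$ via $\Ad(\sigma)$ modulo $\Inn(G_a)$. This is where the rigidity of the triangle curve factor (the pair $(D_a, G_a)$ has no moduli) and prop.\ 3.15 of \cite{isogenous} do the essential work; once the fibration structure is used to split off the $D_a$-factor, the rest reduces cleanly to the already-proven Theorem \ref{markedtriangle}.
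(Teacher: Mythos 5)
Your proposal is correct and follows essentially the same route as the paper: build $(D_a\times D')/G_a$, use the structure theory of surfaces isogenous to a product to split an isomorphism of \'etale marked surfaces into a product $f_1\times f_2$, and feed the resulting isomorphism of marked triangle curves $(D_a,G_a,i_a)\cong(D_a^{\s},G_a,\Ad(\s)\circ i_a)$ into Theorem \ref{markedtriangle}. The only (harmless) divergence is how the two factors are told apart: the paper uses the rigidity lemma 3.8 of \cite{isogenous} together with the fact that the $G_a$-action on $D_a$ is \emph{not} free while that on the second factor is, which is cleaner than your genus comparison (note your formula $g_1=|G_a|(g-1)+1$ is not the genus of $D_a$, since $D_a\ra\PP^1$ is ramified, though your ``arrange the genera to differ'' fallback would still work).
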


\begin{proof}
Given $ a \in \bar{\Q}$, consider a connected component $\hat{ \frak N}^{\rho}_a$ of the space of
$G$-marked surfaces of general type $\frak M^G_{x,y}$
corresponding to a given homomorphism 
$$  \mu \colon \pi_{g'} \ra G: = G_a.$$

This homomorphism has a kernel isomorphic to $\pi_{g_2}$, and conjugation by elements of  $\pi_{g'} $
determines a homomorphism 
$$ \rho \colon G \ra \Out ^+ ( \pi_{g_2}) = \Map_{g_2} $$ 
which is well defined, up to conjugation in the mapping class group $\Map_{g_2}$
($\rho$ is the topological type of the action of $G$).

Our   theorem
 follows now from the following

\noindent
{\bf Main Claim:} if $\hat{ \frak N}^{\rho}_a  = \sigma (\hat{ \frak N}^{\rho}_a)$, then necessarily $a  = \sigma (a)$.

Our assumption says that there are  two curves $C, C' $ of genus $g'$, and two respective covering curves
$C_2$, $C_2'$, with group $G_a$ and monodromy  type $\mu$ (equivalently, with topological type $\rho$ of the action
of $G_a$), such that there exists an isomorphism

$$ f : D_a^{\s} \times  C_2^{\s} \ra D_a \times C_2'$$

commuting with the action of $G_a$ on both surfaces.

By the rigidity lemma 3.8 of  \cite{isogenous}, $f$ is of product type, and since one action is not free while the other is free,
we obtain that $ f = f_1 \times f_2$, where $f_1 : D_a^{\s}  \ra D_a$ commutes with the $G_a$ action.

Therefore the marked triangle curves $(D_a, G_a, i_a)$ and  $(D_a^{\s} , G_a, \Ad (\s)  i_a)$ are isomorphic and
by Theorem  \ref{markedtriangle} we get $ a = \s (a)$.

\end{proof}

\subsection{What happens, if we forget the marking?} \ 

\noindent
Assume now  that $\s$ acts as the identity on the subset  $\pi_0( \frak N_a)$ of   $\pi_0( \frak M)$,
whose  points correspond to the connected components  $\frak N^{\rho}_a$ (image of $\hat{\frak N}^{\rho}_a$ in the moduli space
$\frak M$ of surfaces of general type).

 Then we use the following trick:
let $\la \in \Aut (G)$, and consider now the epimorphism $ \la \circ \mu$, to which corresponds  
the connected component $\frak N^{\rho  \la^{-1}}_a$ of the moduli space.

The component contains a surface $S_{\la}$ which is the quotient of the same product of curves
$C_1 \times C_2$, but where the action of $G$ is different, since we divide  by another subgroup,  the subgroup

$$ G (\la) : = \{ ( g, \la(g)) \subset G \times G\}. $$

Therefore the Galois action on $C_1 \times C_2$  is always the same, and we get, by the 
 above assumption,  that there is an automorphism $\psi_{\la}$ of $G$, induced by an isomorphism of
 $ \pi_1 (S_{\la} ) $ with $ \pi_1 (S_{\la}^{\s} ) $. 
 
 Indeed, this isomorphism of fundamental groups is induced by an isomorphism of 
 $ S_{\la} $ with the conjugate $(S''_{\la})^{\s} $ of another surface $S''_{\la} $ in the connected component;
 this isomorphism lifts to an isomorphism   of product type 
 $$ f_1 \times f_2 : C_1 \times C_2 \cong C_1^{\s} \times (C''_2)^{\s}.$$
 
 Notice that, for each $\la$, $(S''_{\la})^{\s} $ is a quotient of $C_1^{\s} \times (C''_2)^{\s}$.

 Identifying these two surfaces to $ S_{\la}  = (C_1 \times C_2 )/G$ via this isomorphism, we get that
the Galois automorphism $\s$ acts on $G \times G$ by a product automorphism $\psi_1 \times \psi_2$,
 and the automorphism  $\psi_{\la}$ of $G$ is induced by the identification of $G \cong G (\la)$
 given by the first projection.
 
 Note  that, while $\psi_1$ is unique, $\psi_2$ is only defined up to an inner automorphism,
 corresponding to an automorphism of $C_2$ contained in $G$.
 
 We must now have that 
 $$( \psi_1 \times \psi_2) (G (\la) ) = G (\la)  \Leftrightarrow (\psi_1(g), \psi_2(\la (g))) \in G (\la)    \Leftrightarrow $$
$$   \Leftrightarrow  \psi_2(\la (g))) = \la (  \psi_1(g))   \forall g \in G. $$

By setting $\la = Id$, we obtain $\psi_1 = \psi_2$, and using
$$ \psi_2 \circ \la  = \la \circ  \psi_1 $$

 we reach the following conclusion 

\begin{prop}\label{center} $\psi_1$  lies in the centre $ Z ( \Aut (G))$ of   $ \Aut (G)$, 
in particular the class  $[\psi_1] \in \Out (G)$ lies in the centre $ Z ( \Out (G))$.
\end{prop}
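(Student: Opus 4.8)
The statement to prove is Proposition~\ref{center}: the automorphism $\psi_1 \in \Aut(G)$ arising from the Galois action lies in the centre $Z(\Aut(G))$, and hence $[\psi_1] \in Z(\Out(G))$. The whole setup preceding the proposition has already done the work of producing, for \emph{each} $\lambda \in \Aut(G)$, a relation coming from the requirement that the product automorphism $\psi_1 \times \psi_2$ of $G \times G$ preserve the ``twisted diagonal'' subgroup $G(\lambda) = \{(g, \lambda(g))\} \subset G \times G$. The plan is simply to extract the consequences of that family of relations.

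\textbf{Key steps.} First I would record the relation derived just above the proposition: preservation of $G(\lambda)$ forces $\psi_2(\lambda(g)) = \lambda(\psi_1(g))$ for all $g \in G$, i.e.\ $\psi_2 \circ \lambda = \lambda \circ \psi_1$ as maps $G \to G$. Next, specialize $\lambda = \id$: this yields $\psi_2 = \psi_1$, so that the single automorphism $\psi := \psi_1 = \psi_2$ satisfies $\psi \circ \lambda = \lambda \circ \psi$ for \emph{every} $\lambda \in \Aut(G)$. This is precisely the definition of $\psi$ lying in the centralizer of $\Aut(G)$ inside $\Aut(G)$, that is, $\psi \in Z(\Aut(G))$. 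Finally, for the $\Out(G)$ statement, I would note that the quotient homomorphism $\Aut(G) \to \Out(G)$ carries the centre $Z(\Aut(G))$ into the centre $Z(\Out(G))$: if $\psi$ commutes with all of $\Aut(G)$ then its class $[\psi]$ commutes with all of $\Out(G)$, since every element of $\Out(G)$ lifts to some element of $\Aut(G)$. This gives $[\psi_1] \in Z(\Out(G))$, completing the argument.

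\textbf{Main obstacle.} There is essentially no hard analytic or geometric content left at this stage — all the delicate points (rigidity of triangle curves, the product structure of isomorphisms between surfaces isogenous to a product via the rigidity lemma, the uniqueness up to inner automorphism of $\psi_2$, the fact that $\sigma$ acts by a \emph{product} automorphism on $G \times G$) have been established before the proposition is stated. The only subtlety to handle with care is the ambiguity in $\psi_2$: it is defined only up to an inner automorphism of $G$ corresponding to an element of $G$ viewed inside $\Aut(C_2)$. One must check that this ambiguity does not spoil the conclusion — but since the final claim is about the \emph{class} $[\psi_1] \in \Out(G)$, and the relation $\psi_2 \circ \lambda = \lambda \circ \psi_1$ holds for the chosen representatives, modifying $\psi_2$ by an inner automorphism only changes $\psi_1$ by a correspondingly controlled amount that disappears in $\Out(G)$; the cleanest route is to observe that the identity $\psi \circ \lambda = \lambda \circ \psi$ for all $\lambda$, read modulo $\Inn(G)$, directly says $[\psi]$ is central in $\Out(G)$, so no separate bookkeeping of the inner ambiguity is needed. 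Thus the proof is a short formal deduction from the displayed relations immediately preceding the proposition.
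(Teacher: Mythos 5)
Your argument is exactly the paper's: the paper proves this proposition by the displayed relation $\psi_2\circ\la=\la\circ\psi_1$ derived from $(\psi_1\times\psi_2)(G(\la))=G(\la)$, specializes $\la=\id$ to get $\psi_1=\psi_2$, and concludes centrality in $\Aut(G)$ and hence in $\Out(G)$. Your additional remark that the inner-automorphism ambiguity in $\psi_2$ is harmless because only the class $[\psi_1]\in Z(\Out(G))$ is used later is a correct and slightly more careful reading of the same point the paper makes in passing.
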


Clearly, if this class is trivial, then the  triangle curves $(D_a, G)$ and $(D_b,G)$  differ by an inner automorphism
of $G$ and we conclude by proposition \ref{innerOK}  that $C_a \cong C_b $, hence $a=b$, a contradiction.

Hence we may assume that the class $[\psi_1] \in  Z ( \Out (G))$ is nontrivial. 

We are now ready for the proof of 

\begin{teo} \label{isogenous2}The absolute Galois group $\Gal(\bar{\Q} /\Q)$ acts faithfully on the  set of connected
components of the (coarse) moduli space of    surfaces of general type. 
\end{teo}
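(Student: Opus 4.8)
The plan is to study the kernel $\frak K$ of the action of $\Gal(\bar{\Q}/\Q)$ on the set $\pi_0(\frak M)$ of connected components of the moduli space and to prove that $\frak K=\{1\}$. Being the kernel of an action, $\frak K$ is a normal subgroup of $\Gal(\bar{\Q}/\Q)$; since, by a classical result (see \cite{F-J}), $\Gal(\bar{\Q}/\Q)$ has no nontrivial normal abelian subgroup, it suffices to prove that $\frak K$ is abelian, and for this it suffices to construct an \emph{injective homomorphism} of $\frak K$ into an abelian group. (The marked analogue, Theorem \ref{marked isogenous}, is already available; the whole difficulty here is that forgetting the marking breaks the direct reduction to Theorem \ref{markedtriangle}, and must be compensated by the twisting construction of Proposition \ref{center}.)

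To build this homomorphism I would fix $g=3$, so that Proposition \ref{a=b}(1) gives $C_a\cong C_b\iff a=b$ for every $a\in\bar{\Q}\setminus\Q$ (such an $a$ is not in $\Z$, so Theorem \ref{markedtriangle} attaches to it the marked triangle curve $(D_a,G_a)$). Let $\sigma\in\frak K$. Every connected component $\frak N^{\rho}_a$ of $\frak M$, attached to $a$ and to auxiliary data $g'\geq 2$, $\mu\colon\pi_{g'}\to G_a$, is $\sigma$-invariant, so we are in the situation analysed right before Proposition \ref{center}: a surface $S=(D_a\times C_2)/G_a$ in $\frak N^{\rho}_a$ is isomorphic to the Galois conjugate of a surface in the same component, this isomorphism lifts to a product-type isomorphism $f_1\times f_2$ of the covering products, and Proposition \ref{center} shows that the resulting twisting automorphism $\psi_1\in\Aut(G_a)$ has class $\beta_a(\sigma):=[\psi_1]\in Z(\Out(G_a))$. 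I would first check that $\beta_a(\sigma)$ is well defined, independently of $(g',\mu)$ and of the lift, using that $\psi_1$ is canonically attached on $G_a$ to a $G_a$-twisted equivariant isomorphism $D_a\to D_a^{\sigma}$ in the sense of the Remark preceding Proposition \ref{innerOK}, the residual freedom changing it only by an inner automorphism. The crucial point is then the additivity
$$\beta_a(\sigma\tau)=\beta_a(\sigma)\,\beta_a(\tau)\qquad(\sigma,\tau\in\frak K)$$
in $Z(\Out(G_a))$: composing the twisted equivariance of $f_1$ for $\sigma$ with the $\tau$-conjugate of the one for $\tau$ realises the twist for $\sigma\tau$ and gives $\psi_1(\sigma\tau)=\psi_1(\sigma)\circ\psi_1(\tau)$ up to an inner automorphism, and it is precisely the centrality furnished by Proposition \ref{center} that turns this cocycle identity into a homomorphism identity. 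Granting it, $\beta:=(\beta_a)_a$ is a homomorphism of $\frak K$ into the abelian group $\prod_{a\in\bar{\Q}\setminus\Q}Z(\Out(G_a))$, which is of the form $\bigoplus_G Z(\Out(G))$ announced in the Introduction.

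It remains to prove that $\beta$ is injective. If $\beta(\sigma)=0$, then for every $a\in\bar{\Q}\setminus\Q$ the twisting automorphism $\psi_1$ is inner, so Proposition \ref{innerOK}, applied to $D_a$ and $D_a^{\sigma}=D_{\sigma(a)}$, yields $C_a\cong C_{\sigma(a)}$ and hence $\sigma(a)=a$ by the choice $g=3$; as this holds for all $a\in\bar{\Q}\setminus\Q$ we get $\sigma=\id$. Thus $\frak K$ embeds into an abelian group, so it is abelian, and being normal in $\Gal(\bar{\Q}/\Q)$ it is trivial by \cite{F-J}; hence the action on $\pi_0(\frak M)$ is faithful. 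The main obstacle I anticipate is the homomorphism property of $\beta$: it requires organising carefully the bookkeeping of the product-type isomorphisms $f_1\times f_2$ and of their Galois conjugates --- tracking the inner ambiguity of $\psi_2$ coming from $\Aut(C_2)\cap G_a$ as well as the normaliser ambiguity of $f_1$ inside $\Aut(D_a)$ --- and checking that, after projecting to $\Out(G_a)$ and using the centrality of Proposition \ref{center}, all twisting contributions cancel; by contrast the well-definedness, the injectivity, and the final appeal to \cite{F-J} are routine.
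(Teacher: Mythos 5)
Your overall strategy is exactly the paper's: embed the kernel $\mathfrak K$ of the action on $\pi_0(\mathfrak M)$ into an abelian group of the form $\prod_a\prod Z(\Out(\cdot))$ via the twisting automorphism $\psi_1$ of Proposition \ref{center}, conclude that $\mathfrak K$ is an abelian normal subgroup, and kill it by the Fried--Jarden result (Propositions \ref{Abeliankernel} and \ref{noAbeliankernel}). The cocycle computation you sketch for $\beta_a(\sigma\tau)$ is also the one the paper carries out, with $\Psi:=\varphi^{\tau}\circ\Phi$.

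There is, however, one genuine gap, and it is precisely the point the paper goes out of its way to repair: you define $\beta_a(\sigma)$ as a class in $Z(\Out(G_a))$, where $G_a$ is the monodromy group of the Belyi closure, and you assert that ``the residual freedom chang[es] it only by an inner automorphism.'' That assertion is not justified for $G_a$. Two admissible isomorphisms $\Phi,\Phi'\colon D_a\to D_{\sigma(a)}$ compatible with the $G_a$-actions differ by an element $u=\Phi^{-1}\circ\Phi'$ of the normalizer of $G_a$ in $\Aut(D_a)$, and $\Ad(u)\vert_{G_a}$ need only be inner when $u\in G_a$; in general it is an outer automorphism of $G_a$, so your $\beta_a(\sigma)$ is only defined modulo the image of $N_{\Aut(D_a)}(G_a)$ in $\Out(G_a)$, and neither the homomorphism property nor the injectivity argument survives this ambiguity as stated. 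The paper's fix is Definition \ref{fullmarked} and Theorem \ref{fullmarkedtr}: one replaces $(D_a,G_a)$ by the \emph{fully marked} triangle curve $(D_a,\tilde G_a)$ with $\tilde G_a:=\Aut(D_a)$, and builds the components $\tilde{\mathfrak N}^{\tilde\rho}_a$ from coverings with group $\tilde G_a$. Then any two isomorphisms $D_a\to D_{\sigma(a)}$ differ by an element of $\Aut(D_a)=\tilde G_a$ itself, so $\psi_{\sigma,a}$ is genuinely well defined in $\Out(\tilde G_a)$, and the target becomes $\prod_{a}\prod_{\tilde\rho}Z(\Out(\tilde G_a))$ (which also disposes of your worry about independence of $(g',\mu)$: one simply keeps one factor per $\tilde\rho$ rather than proving independence). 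With that substitution your argument closes up and coincides with the paper's proof; without it, the well-definedness step --- which you dismiss as routine --- is exactly where the proof breaks.
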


There are two main intermediate results, which obviously together  imply theorem \ref{isogenous}.

For the first we need a new definition.

\begin{df}
Let $ a\in \bar{\Q} \setminus \Q$ and define $\tilde{G}_a : = \Aut (D_a)$. 

Given a 
surjective homomorphism $$  \tilde{\mu} \colon \pi_{g'} \ra \tilde{G}_a ,$$
with topological type $\tilde{\rho}$, consider all the \'etale covering spaces $C_2 \ra C_2 / \tilde{G}_a = C'$
of curves $C'$ of genus $g'$ with this given topological type.

 Consider then the connected component $\tilde{ \frak N}^{\tilde{\rho}}_a$ of the moduli space of
surfaces of general type $\frak M$
corresponding to surfaces isogenous to a product of the type
$$ S = (D_a \times C_2) /   \tilde{G}_a.$$

\end{df}

\begin{prop}\label{Abeliankernel}
Let $\mathfrak K$ be the kernel of the action of $\Gal(\bar{\Q} /\Q)$ on $\pi_0 (\mathfrak M)$. Then
$\mathfrak K$ is an abelian subgroup.
\end{prop}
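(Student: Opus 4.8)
<br>

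The plan is to show that $\mathfrak K$ embeds into an abelian group, namely a direct sum of copies of the center $Z(\Out(G))$ as $G$ ranges over the relevant finite groups. The starting point is the construction preceding Proposition \ref{center}: given $\sigma \in \mathfrak K$ and an algebraic number $a \notin \Q$, fix the group $G = \tilde G_a = \Aut(D_a)$ and a surjection $\mu \colon \pi_{g'} \ra G$ with topological type $\rho$. Since $\sigma$ fixes every connected component of $\mathfrak M$, in particular it fixes $\tilde{\mathfrak N}^{\rho\lambda^{-1}}_a$ for every $\lambda \in \Aut(G)$; running the twisting argument that produced Proposition \ref{center} yields, for each such $\sigma$ and each pair $(a,\mu)$, a well-defined element $[\psi_1] = [\psi_1(\sigma, a, \mu)] \in Z(\Out(G))$.

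The heart of the matter is then to check two things. First, that the assignment $\sigma \mapsto ([\psi_1(\sigma,a,\mu)])_{(a,\mu)}$ is a group homomorphism from $\mathfrak K$ to $\bigoplus Z(\Out(\tilde G_a))$ (the index set running over all $a \in \bar\Q \setminus \Q$ and all topological types, or it suffices to use $\bigoplus_G Z(\Out(G))$ as indicated in the introduction): this is essentially the cocycle/composition computation, using that $\psi_1$ arises from the Galois action on the fixed product $C_1 \times C_2$ and that composing two Galois automorphisms composes the induced automorphisms of $G$, modulo inner ones. Because the target is abelian, any commutator of elements of $\mathfrak K$ lands in the kernel of this homomorphism. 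Second — and this is where Proposition \ref{innerOK} is used — if $[\psi_1(\sigma,a,\mu)]$ is trivial for a suitable choice of $a,\mu$, then the marked triangle curves $(D_a, G)$ and $(D_{\sigma(a)}, G)$ differ by an inner automorphism, so $C_a \cong C_{\sigma(a)}$ and hence $\sigma(a) = a$ by Proposition \ref{Belyi}; since $a$ was an arbitrary element of $\bar\Q\setminus\Q$, triviality of the whole tuple of classes forces $\sigma = \id$. Therefore the homomorphism $\mathfrak K \ra \bigoplus Z(\Out(\tilde G_a))$ is injective, $\mathfrak K$ is abelian, and Proposition \ref{Abeliankernel} follows.

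I expect the main obstacle to be verifying cleanly that $\sigma \mapsto ([\psi_1])$ is a homomorphism, i.e. tracking the inner-automorphism ambiguities through the construction. The automorphism $\psi_2$ (and hence the identification used to read off $\psi_1$) is only defined up to an inner automorphism coming from an automorphism of $C_2$ contained in $G$, so one must be careful that the class in $\Out(G)$ — in fact in $Z(\Out(G))$ — is genuinely canonical and that the composition of the constructions for $\sigma$ and $\sigma'$ matches the construction for $\sigma\sigma'$ on the nose after passing to $\Out$. A secondary subtlety is making sure the index set of the direct sum is stable under the action, so that the map is $\mathfrak K$-equivariant in the trivial sense needed, and that finiteness of the number of marked triangle curves of bounded genus keeps everything well-defined over $\bar\Q$ as in Theorem \ref{markedtriangle}. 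Once these bookkeeping points are settled, the conclusion is immediate from the fact that a subgroup of an abelian group is abelian.
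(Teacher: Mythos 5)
Your proposal follows essentially the same route as the paper: embed $\mathfrak K$ into a product of groups $Z(\Out(\tilde G_a))$ via the twisted classes $[\psi_1(\sigma,a)]$, verify the homomorphism property by the composition computation $\psi_{\tau\sigma,a}=\psi_{\sigma,a}\circ\psi_{\tau,a}$ (the paper settles the inner-automorphism ambiguity you flag exactly by working with the \emph{fully marked} curve $(D_a,\Aut(D_a))$, so that any two isomorphisms $D_a\to D_{\sigma(a)}$ differ by an element of $G$), and get injectivity from Proposition \ref{innerOK} plus Proposition \ref{a=b}. One small correction: the target must be the direct \emph{product} $\prod_a\prod_{\tilde\rho} Z(\Out(\tilde G_a))$ rather than the direct sum, since a nontrivial $\sigma$ moves infinitely many $a$ and so its image would not have finite support; this does not affect the conclusion, as a subgroup of the product is still abelian.
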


\begin{proof}
We want to embed the kernel $\mathfrak K$ in  an abelian group, e.g. a  direct product of  groups of the form $ Z ( \Out (G))$, using proposition \ref{center}.

Assume that $\s $ lies in the kernel $\mathfrak K$. Then, for each algebraic number $a$, and every $\tilde{\rho}$ 
as above, $\s$ stabilizes the connected component  $\tilde{ \frak N}^{\tilde{\rho}}_a$ .

Let us denote here for simplicity $G : =  \tilde{G}_a.$

Hence to $\s$ we associate an element $[\psi_1] \in  Z ( \Out (G))$, which is nontrivial if and only if $\s (a) \neq a$.

Therefore it  suffices to show that, for a fixed $a \in \bar{\Q}$, and $\tilde{\rho} \colon G \ra \Map_{{g}_2} = \Out (\pi_{{g}_2})$,
the function explained before stating proposition \ref{center}
$$ \s \mapsto  [\psi_1] \in   Z ( \Out (G))$$
is a homomorphism. 

Observe that in fact, there is a dependence of $\psi_1$ on $\sigma$ and on the algebraic number $a$. To stress these dependences, we change the notation and denote the isomorphism $\psi_1$
corresponding to $\s$ and $a$,  by $\psi_{\s, a}$, i.e., 
$$\psi_{\s, a} (g) = \Phi_1^{-1} \circ g^{\s} \circ \Phi_1 = \Phi_1^{-1} \circ \s g  \s^{-1} \circ \Phi_1 ,$$
where $\Phi_1 \colon D_a \ra D_{\s(a)}$ is the isomorphism induced by the fact that $\sigma$ stabilizes the component 
$\tilde{ \frak N}^{\tilde{\rho}}_a$.

Since $D_a$ is fully marked, whenever we take  another isomorphism $\Phi \colon D_a \rightarrow D_{\s(a)}$  we have that $  (\Phi)^{-1} \circ\Phi_1\in \Aut(D_a) = G$. Therefore,  since we work in $\Out(G_a)$,  $\psi_{\s, a}$ does not depend on the chosen isomorphism $\Phi \colon D_a \rightarrow D_{\s(a)}$.

Let now $\sigma, \tau$ be elements of $\mathfrak{K}$. We have  then (working always up to inner automorphisms of $G$):
\begin{itemize}
\item $\psi_{\s, a} = \Ad(\varphi^{-1}\sigma)$, for any isomorphism $\varphi \colon D_a \ra D_{\s (a)}$, and any algebraic number $a$;
\item $\psi_{\tau, a} = \Ad(\Phi^{-1}\tau)$, for any isomorphism $\Phi \colon D_a \ra D_{\tau (a)}$, and any algebraic number $a$; 
\item $\psi_{  \tau \s, a} = \Ad(\Psi^{-1}\tau\s)$, for any isomorphism $\Psi \colon D_a \ra D_{\tau \s (a)}$.
\end{itemize}

We can choose $\Psi := \varphi^{\tau} \circ \Phi$, and  then we see immediately that 
\begin{multline}
\psi_{\tau \s, a} = \Ad((\varphi^{\tau} \circ \Phi)^{-1} \tau \s) = \Ad(\Phi^{-1}(\varphi^{\tau})^{-1} \tau \sigma) = \\
= \Ad(\Phi^{-1} \tau \varphi^{-1} \tau^{-1} \tau \s) = \Ad(\Phi^{-1}\tau) \Ad(\varphi^{-1}\sigma) = \psi_{\s, a} \circ \psi_{\tau , a} .
\end{multline}

This shows that the injective map 
$$
\mathfrak{K} \ra \prod_{a \in \bar{\QQ}} \prod _{\tilde{\rho}} Z(\Out( \tilde{G}_a)),
$$
is in fact a group homomorphism. Therefore $\mathfrak{K}$ is abelian (as subgroup of an abelian group).

\end{proof}

\begin{prop}\label{noAbeliankernel}
Any abelian normal subgroup $\mathfrak K$  of $\Gal(\bar{\Q} /\Q)$ is trivial.
\end{prop}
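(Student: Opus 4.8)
The plan is to invoke the classical fact, due to ArtinSchreier, that the only nontrivial finite subgroups of $\Gal(\bar\Q/\Q)$ are of order $2$, generated by an involution conjugate to complex conjugation. From this, abelian normal subgroups are handled by a short structural argument. First I would recall the statement we quote from \cite{F-J}: $\Gal(\bar\Q/\Q)$ contains no nontrivial normal \emph{abelian} subgroup. So strictly speaking, once Proposition \ref{Abeliankernel} is in hand, the present proposition is a direct citation. Nonetheless, since the excerpt makes $\mathfrak K$ normal (kernels of actions are normal) and abelian, I will spell out why these two properties together force triviality.

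Here is the argument I would give. Suppose $\mathfrak K \neq 1$ is abelian and normal in $\Gal(\bar\Q/\Q)$, and pick $1 \neq \s \in \mathfrak K$. Because $\Gal(\bar\Q/\Q)$ is profinite, $\s$ generates a closed subgroup $\overline{\langle \s \rangle}$; if $\s$ has finite order, then by ArtinSchreier $\s$ has order $2$ and $\overline{\langle\s\rangle} = \Gal(\bar\Q/K)$ for a real closure $K$ of $\Q$ inside $\bar\Q$, so $\s$ is conjugate to complex conjugation $\gc$. But then the normal abelian subgroup $\mathfrak K$ contains all $\Gal(\bar\Q/\Q)$-conjugates of $\s$, hence all conjugates of $\gc$; since two distinct complex conjugations generate an infinite (even non-abelian, indeed a large) subgroup, $\mathfrak K$ cannot be abelian unless there is only one such conjugate, which is false as the real closures of $\Q$ in $\bar\Q$ are numerous. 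If instead $\s$ has infinite order, one argues similarly: the normal closure of $\overline{\langle\s\rangle}$ is contained in the abelian group $\mathfrak K$, so all conjugates of $\s$ commute; a standard fact (again from \cite{F-J}, using that $\Gal(\bar\Q/\Q)$ has trivial centre and is "strongly complete") rules this out. Either way we reach a contradiction, so $\mathfrak K = 1$.

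The main obstacle is simply that the cleanest route is to cite the known theorem rather than reprove it: the honest statement is that $\Gal(\bar\Q/\Q)$ has trivial centre and no nontrivial abelian normal (even subnormal) subgroup, which rests on ArtinSchreier together with the fact that complex conjugations form a single conjugacy class that is "everywhere dense" in a suitable sense. I would therefore keep the write-up short: state that $\mathfrak K$ is normal (being a kernel), recall from Proposition \ref{Abeliankernel} that it is abelian, and then conclude by the cited result of \cite{F-J} that $\mathfrak K$ is trivial, perhaps appending the ArtinSchreier remark to explain \emph{why} the cited result holds. Combined with Proposition \ref{Abeliankernel}, this immediately yields Theorem \ref{isogenous2}: the kernel $\mathfrak K$ of the $\Gal(\bar\Q/\Q)$-action on $\pi_0(\mathfrak M)$ is abelian and normal, hence trivial, i.e.\ the action is faithful.
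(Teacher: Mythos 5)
Your bottom line --- that the proposition is, in the end, a citation of \cite{F-J} --- is exactly what the paper does, so the final write-up you propose coincides in substance with the paper's proof. The paper is just slightly more explicit about \emph{which} result of \cite{F-J} is used and how: it passes to the fixed field $N$ of $\mathfrak K$ (Galois over $\Q$ since $\mathfrak K$ is normal), observes that $\Q$ is Hilbertian and that $N$ is not separably closed if $\mathfrak K \neq 1$, and invokes Proposition 16.11.6 of \cite{F-J}, which says that $\Gal(N)$ is then not prosolvable --- in particular not abelian. If you keep the citation route, I would recommend citing that specific proposition and making the reduction to it explicit, rather than quoting the bare slogan ``no nontrivial normal abelian subgroup.''

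The supplementary argument you sketch should not be presented as a proof, because it only works in half the cases. The Artin--Schreier part is fine: a torsion element of $\mathfrak K$ would be an involution conjugate to $\gc$, normality would force all conjugates of $\gc$ into $\mathfrak K$, and two distinct such involutions generate an infinite dihedral (hence non-abelian) group since their product has infinite order by Artin--Schreier. But for an element $\s \in \mathfrak K$ of infinite order you only conclude that all conjugates of $\s$ commute, and then appeal to ``a standard fact'' resting on trivial centre and strong completeness. Neither of these delivers the conclusion: a profinite group can have trivial centre and still contain a nontrivial non-central abelian normal subgroup, and strong completeness is about continuity of finite-index subgroups, not about normal abelian subgroups. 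So the infinite-order case is a genuine gap in the self-contained version; what actually closes it is precisely the Hilbertianity argument from \cite{F-J} that the paper cites. Either commit to the citation (as the paper does) or be aware that the hard content of the statement lives entirely in the infinite-order case you have not proved.
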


\begin{proof}
Let $ N \subset \bar{\Q} $ be the fixed subfield for the subgroup $\mathfrak K$ of $\Gal(\bar{\Q} /\Q)$.

$N$ is 
a Galois extension of the  Hilbertian field $\Q$ and $N$, if  $\mathfrak K$ is not trivial,  is not separably
closed. Hence,  by proposition 16.11.6 of \cite{F-J} then $\Gal(N)$ is not prosolvable, in particular, 
$\mathfrak K = \Gal(N)$ is not abelian, a contradiction.

\end{proof}

Theorem \ref{isogenous} has the following consequence:

\begin{teo} \label{fundamentalgroup}  If $\sigma \in \Gal(\bar{\Q} /\Q)$ is not in the conjugacy class of complex conjugation $\gc$,
then there exists a surface isogenous to a product $X$ such that $X$ and the Galois conjugate surface $X^{\sigma}$  have non isomorphic fundamental groups. 
\end{teo}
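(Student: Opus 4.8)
The plan is to deduce Theorem~\ref{fundamentalgroup} from Theorem~\ref{isogenous} together with the structure theory of surfaces isogenous to a product. The starting point is the observation that if $\sigma \in \Gal(\bar\Q/\Q)$ acts nontrivially on $\pi_0(\frak M)$ via Theorem~\ref{isogenous}, there is by construction a surface isogenous to a product $X$ (indeed one of our $S \cong (D_a \times D')/G_a$, or one of the fully marked variants $S = (D_a \times C_2)/\tilde G_a$) such that $X^\sigma$ lies in a \emph{different} connected component of the moduli space $\frak M$ of surfaces of general type, i.e.\ $[X^\sigma] \neq [X]$ in $\pi_0(\frak M)$. The key point is then that for surfaces isogenous to a product, being in different connected components is incompatible with having isomorphic fundamental groups — \emph{unless} the discrepancy is explained by complex conjugation, which is exactly the exceptional case excluded in the hypothesis.

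First I would invoke the weak rigidity / structure theorem for surfaces isogenous to a product (the theorem recalled before Definition~\ref{fullmarked}, from \cite{isogenous} and \cite{cat03}): the subset $\frak M^{top}_X = \frak M^{diff}_X$ of the moduli space consisting of surfaces homeomorphic (equivalently diffeomorphic) to $X$ is either a single connected component, or the union of two connected components exchanged by complex conjugation. In particular, if $X^\sigma$ had fundamental group isomorphic to $\pi_1(X)$, then (the Euler number being a topological, hence Galois, invariant, so $e(X^\sigma)=e(X)$) $X^\sigma$ would be homeomorphic to $X$, hence $[X^\sigma]$ would lie in $\frak M^{top}_X$. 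Since $[X^\sigma]\neq[X]$ in $\pi_0(\frak M)$, this forces $\frak M^{top}_X$ to consist of two components swapped by $\gc$, and $[X^\sigma]$ to be the image of $[X]$ under complex conjugation; that is, $X^\sigma \cong \bar X \cong X^{\gc}$ up to a choice inside the same component.

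The remaining, and main, step is to upgrade this into a statement about $\sigma$ itself, namely that $\sigma$ lies in the conjugacy class of $\gc$ — contrapositive to the hypothesis. Here I would argue as follows: if $X^\sigma$ and $X^{\gc}$ were to land in the same connected component of $\frak M$ for \emph{every} surface $X$ isogenous to a product produced by our construction, then $\sigma$ and $\gc$ would act identically on the relevant part of $\pi_0(\frak M)$ on which we have already shown the action to be faithful; more precisely, $\gc^{-1}\sigma$ would act trivially on $\pi_0(\frak M)$, hence lie in the kernel $\frak K$, which by Propositions~\ref{Abeliankernel} and~\ref{noAbeliankernel} is trivial, giving $\sigma = \gc$. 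The only subtlety — and the real obstacle — is that $\gc$ has infinitely many conjugates in $\Gal(\bar\Q/\Q)$, and a priori $X^\sigma$ might agree with $\bar X$ for \emph{some} choices of $a$ and topological type $\tilde\rho$ but not others; so one must run the argument uniformly. The clean way is: $\sigma$ not conjugate to $\gc$ means $\sigma \neq \tau\gc\tau^{-1}$ for all $\tau$; applying a suitable $\tau$ amounts to replacing the base point $a$ by $\tau(a)$ and the surface $X$ by $X^\tau$, and since our construction is available for \emph{every} algebraic number $a$, some choice of $a$ (equivalently some $\tau$) will then produce a surface $X$ with $[X^\sigma]\neq[X^{\gc}]$ in $\pi_0(\frak M)$; combined with the structure theorem above, $[X^\sigma]$ cannot then lie in $\frak M^{top}_X$ at all, so $\pi_1(X^\sigma)\not\cong\pi_1(X)$, as desired. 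I expect the bookkeeping in this last uniformity argument — keeping track of how conjugating $\sigma$ by $\tau$ interacts with changing the hyperelliptic parameter $a$ and the marked étale structure, so that the faithfulness result of Theorem~\ref{isogenous} genuinely applies — to be the part requiring the most care.
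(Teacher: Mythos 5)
Your first half is sound and matches the paper's underlying mechanism: Galois conjugation preserves the Euler number, so if $\pi_1(X^{\sigma})\cong\pi_1(X)$ then $[X^{\sigma}]$ lies in $\mathfrak M^{top}_X$, which by the structure theorem for surfaces isogenous to a product consists of at most two connected components, exchanged by complex conjugation. The gap is in your final ``uniformity'' step. What the structure theorem gives you is only a \emph{pointwise} dichotomy: for each $X$ with $\pi_1(X^\sigma)\cong\pi_1(X)$, either $[X^{\sigma}]=[X]$ or $[X^{\sigma}]=[\bar X]$. To conclude that $\sigma=\mathrm{id}$ or $\sigma=\gc$ you would need one branch of the dichotomy to hold for \emph{all} $X$ simultaneously, and nothing you write rules out the scenario in which the components split into two families, on one of which $\sigma$ acts as the identity and on the other as $\gc$ does. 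Your proposed fix does not close this: faithfulness applied to $\sigma\neq 1$ produces some $X_1$ with $[X_1^{\sigma}]\neq[X_1]$, and applied to $\sigma\gc^{-1}\neq 1$ produces some $X_2$ with $[X_2^{\sigma}]\neq[X_2^{\gc}]$, but you need both inequalities for the \emph{same} surface, and conjugating by $\tau$ (which moreover interacts nontrivially with complex conjugation, since $\gc$ and $\tau$ need not commute) does not manufacture such a surface. Note also that you never use the actual hypothesis ``not conjugate to $\gc$'' beyond ``$\sigma\neq\gc$ and $\sigma\neq 1$''; a correct proof must exploit the full conjugacy class.

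The paper closes exactly this gap with an orbit argument. By Artin's theorem the only elements of finite order in $\Gal(\bar\Q/\Q)$ are the identity and the conjugates of $\gc$, so the hypothesis forces $\sigma$ to have infinite order; since by Theorem \ref{isogenous} (triviality of the kernel, applied to all powers $\sigma^{N}$) no power of $\sigma$ acts trivially on the relevant subset of $\pi_0(\mathfrak M)$, the $\sigma$-orbits there have unbounded size, in particular some orbit contains three distinct components $[S_0],[S_1],[S_2]$ with $S_{i+1}=S_i^{\sigma}$. Since at most two components can carry a fixed pair (Euler number, fundamental group), either $\pi_1(S_0)\not\cong\pi_1(S_1)$ or $\pi_1(S_1)\not\cong\pi_1(S_2)$. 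You can recover this from your own setup: iterating your dichotomy shows that if $\pi_1(X^{\sigma})\cong\pi_1(X)$ for every $X$ in the family, then every $\sigma$-orbit is contained in $\{[X],[\bar X]\}$, hence $\sigma^2$ lies in the kernel $\mathfrak K=\{1\}$, and Artin's theorem then forces $\sigma\in\{1\}\cup\mathrm{conj}(\gc)$ --- but this completion is precisely the orbit argument, not the $\tau$-conjugation argument you sketched.
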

\begin{proof} By a  theorem of Artin (see cor. 9.3 in \cite{Lang})) we know that any $\s$ which is not in the conjugacy class of $\mathfrak c$ has infinite order.

By theorem \ref{isogenous} the orbits of $\s$ on the subset of $\pi_0 (\mathfrak M)$ corresponding to
the union of the $\mathfrak N_a$'s have unbounded cardinality, otherwise there is a power 
of $\s$ acting trivially,  contradicting the statement of \ref{isogenous}.

Take now an orbit with three elements at least: then we get surfaces $S_0$, $S_1 : = S_0^{\s}$, $S_2 : = S_1^{\s}$,
which belong to three different components. Since we have at most two different connected
components where the fundamental group is the same, we conclude that either  $\pi_1 (S_0) \neq  \pi_1 (S_1) $
or  $\pi_1 (S_1) \neq  \pi_1 (S_2) $.

\end{proof}

 Observe that $X_a$ and $(X_a)^{\sigma}$ have isomorphic
 Grothendieck \'etale fundamental groups. In particular, the profinite completion of $\pi_1(X_a)$ and
$\pi_1((X_a)^{\sigma})$ are isomorphic. In the last section we shall give explicit examples
where the actual fundamental groups are not isomorphic.

Another interesting consequence is the following.
Observe that the absolute Galois group
$\Gal(\bar{\Q} /\Q)$ acts on the set  of connected components of the (coarse) moduli spaces of minimal
surfaces of general type.
Theorem \ref{fundamentalgroup} has as a consequence that this action of $\Gal(\bar{\Q} /\Q)$
  {\em does not}  induce an action on the set of isomorphism classes of fundamental groups of
surfaces of general type.

\begin{cor}
$\Gal(\bar{\Q} /\Q)$
  {\em does not}  act on the set of isomorphism classes of fundamental groups of
surfaces of general type.
\end{cor}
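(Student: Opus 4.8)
The plan is to argue by contradiction, deducing everything from Theorem~\ref{fundamentalgroup} together with Proposition~\ref{noAbeliankernel}. Suppose that the action of $\Gal(\bar{\Q}/\Q)$ on $\pi_0(\mathfrak M)$ did descend to an action on the set $\mathcal S$ of isomorphism classes of fundamental groups of surfaces of general type; concretely, that for each $\s\in\Gal(\bar{\Q}/\Q)$ there is a permutation $\rho(\s)$ of $\mathcal S$, depending homomorphically on $\s$, with
$$\rho(\s)\bigl([\pi_1(X)]\bigr)\ =\ [\pi_1(X^{\s})]$$
for every surface of general type $X$. (This is exactly the ``action induced by the action on moduli'': it presupposes in particular that $\pi_1(X)\cong\pi_1(Y)$ forces $\pi_1(X^{\s})\cong\pi_1(Y^{\s})$.) First I would observe that complex conjugation lies in $\mathfrak L:=\ker\rho$: since $X^{\gc}=\bar X$ is diffeomorphic to $X$ one has $\pi_1(X^{\gc})\cong\pi_1(X)$ for every $X$, hence $\rho(\gc)=\id_{\mathcal S}$. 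Thus $\mathfrak L$ is a \emph{nontrivial} normal subgroup of $\Gal(\bar{\Q}/\Q)$.

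The core of the proof is to exhibit inside $\mathfrak L$ an element $\s\neq 1$ which is \emph{not} in the conjugacy class of $\gc$. Suppose no such element existed, i.e.\ every nontrivial element of $\mathfrak L$ were conjugate to $\gc$. Each such element, being conjugate to the involution $\gc$, would have order $2$, so every $x\in\mathfrak L$ would satisfy $x^{2}=1$; consequently $\mathfrak L$ would be abelian, since $xy=(xy)^{-1}=y^{-1}x^{-1}=yx$ for all $x,y\in\mathfrak L$. But by Proposition~\ref{noAbeliankernel} an abelian normal subgroup of $\Gal(\bar{\Q}/\Q)$ is trivial, contradicting $\gc\in\mathfrak L$. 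Hence there is $\s\in\mathfrak L$ with $\s\neq 1$ and $\s$ not conjugate to $\gc$.

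To this $\s$ I would then apply Theorem~\ref{fundamentalgroup}: it produces a surface isogenous to a product $X$ (in particular a surface of general type) with $\pi_1(X)\not\cong\pi_1(X^{\s})$. On the other hand $\s\in\ker\rho$ gives $\rho(\s)=\id_{\mathcal S}$, whence $[\pi_1(X)]=\rho(\s)\bigl([\pi_1(X)]\bigr)=[\pi_1(X^{\s})]$, i.e.\ $\pi_1(X)\cong\pi_1(X^{\s})$ --- a contradiction, which proves the corollary. I expect the only genuinely delicate point to be the initial reduction: one has to agree that ``$\Gal(\bar{\Q}/\Q)$ acts on isomorphism classes of fundamental groups of surfaces of general type'' means precisely the existence of a compatible homomorphism $\rho$ as above, and then note that the action on $\pi_0(\mathfrak M)$ would indeed provide it; granting this, the remainder is a short formal argument whose only non-elementary ingredient is Proposition~\ref{noAbeliankernel} (and hence, ultimately, the Hilbertianity of $\Q$).
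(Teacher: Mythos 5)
Your proof is correct and follows essentially the same route as the paper: both arguments place complex conjugation in the (normal) kernel of the hypothetical induced action, use Theorem~\ref{fundamentalgroup} to force every nontrivial kernel element to be conjugate to $\gc$ and hence of order $2$, conclude that the kernel is abelian, and contradict Proposition~\ref{noAbeliankernel}. The only cosmetic difference is that you run a dichotomy on the full kernel $\mathfrak L$, while the paper phrases the same argument via the normal closure of $\langle\gc\rangle$ inside that kernel.
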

\Proof
 In fact, complex conjugation does not change the isomorphism
class of the fundamental group ($X$ and $\bar{X}$ are diffeomorphic). Now, if we had an action on the set
of isomorphism classes of fundamental groups, then the whole  normal closure $\mathfrak H$ of the  $\Z / 2$ generated by
complex conjugation (the set of automorphisms of finite order, by the cited theorem of E. Artin,
see corollary 9.3 in \cite{Lang}) would act trivially. 

By Theorem \ref{fundamentalgroup} the subgroup  $\mathfrak H$ would then be equal to  the union of these
 elements of order $2$ in  $\Gal(\bar{\Q} /\Q)$. But a group where each element has order $\leq 2$
 is abelian, and again we would have a normal abelian subgroup, $\mathfrak H$, of $\Gal(\bar{\Q} /\Q)$,
 contradicting \ref{noAbeliankernel}.
 
\qed

The above arguments show that the set of elements $ \s \in \Gal(\bar{\Q} /\Q)$ such that for each surface of general type
$S$ and $S^{\s}$ have isomorphic fundamental groups is indeed a subgroup where all elements of order two,
in particular it is an abelian group of exponent 2.

\begin{Question} (Conjecture 2.5 in \cite{catbumi})
Is it true that for each $ \s \in \Gal(\bar{\Q} /\Q)$, different from the identity and from complex conjugation,
there exists a  surface of general type $S$ such that
$S$ and $S^{\s}$ have non  isomorphic fundamental groups ?

\end{Question}

It is almost impossible to calculate explicitly the fundamental groups of the
 surfaces constructed above, since one has to explicitly  calculate the monodromy of the Belyi function  of
the very special hyperelliptic curves $C_a$.

Therefore we give in the next section  explicit examples of pairs of   rigid surfaces with
non isomorphic fundamental groups which are Galois conjugate.

\section{Explicit examples}

In this section we provide, as we already mentioned,  explicit examples of pairs of  surfaces with
non isomorphic fundamental groups which are conjugate under the absolute Galois group. Hence 
they have non isomorphic fundamental groups with
isomorphic profinite completions 
(recall that the completion of a group $G$ is the inverse limit
$$ {\hat G} = lim_{K\unlhd_f G}  (G/K),$$
of the factors $G/K$, $K$ being a normal subgroup of finite index in $G$).

The surfaces in our examples are rigid.

In fact, we can prove the following
\begin{teo}\label{beauville}
Beauville surfaces  yield explicit examples of Galois conjugate surfaces with
non-isomorphic fundamental groups (whose profinite completions are isomorphic).

\end{teo}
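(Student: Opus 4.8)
The plan is to exhibit a single explicit pair of Beauville surfaces, Galois conjugate but with non-isomorphic fundamental groups, rather than to re-run the abstract machinery of Theorem \ref{isogenous}. The idea is to use polynomials with exactly two critical values (as announced in the introduction): such a polynomial $p \colon \PP^1 \ra \PP^1$ of degree $n$ with critical values $\{0,1\}$ is automatically a Belyi function, and the combinatorics of its two fibres over $0$ and $1$ (the associated ``dessin'', here a bipartite tree when $p$ is a polynomial, or a more general planar picture) records the two permutations $\tau_0,\tau_1$ generating a transitive subgroup $G \subset \FS_n$; setting $\tau_\infty := (\tau_0\tau_1)^{-1}$ one gets a spherical system of generators. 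First I would recall from the theory of dessins that the absolute Galois group $\Gal(\bar\Q/\Q)$ acts on the finite set of such dessins of given degree (preserving the underlying permutation triple data up to the relevant equivalence only on each orbit), and that there exist orbits — already among plane trees, by classical examples going back to work around Grothendieck's \emph{Esquisse} — on which $\Gal(\bar\Q/\Q)$ acts with at least two elements that are distinguished by a Galois-invariant \emph{yet not purely combinatorial} refinement.

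Second, I would set up the Beauville surface: choose two such generating triples $(a_1,b_1,c_1)$ and $(a_2,b_2,c_2)$ in a fixed finite group $G$, with the usual disjointness condition on the stabilisers guaranteeing that the diagonal $G$-action on $C_1 \times C_2$ is free, where $C_i$ is the triangle curve determined by the $i$-th triple. This produces a rigid surface $S = (C_1\times C_2)/G$ isogenous to a product, defined over $\bar\Q$. For the first factor I would take the triangle curve coming from a Galois orbit of dessins of size $\geq 2$, arranged so that the \emph{marked} triangle curve changes under a chosen $\sigma$; the second factor is auxiliary and chosen rigidly (e.g. a fixed Beauville-type triple) just to make the product surface of general type and rigid. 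Then $S^\sigma = (C_1^\sigma \times C_2^\sigma)/G$, and by the weak rigidity theorem for surfaces isogenous to a product (the quoted theorem from \cite{isogenous}, \cite{cat03}) any surface with the same Euler number and fundamental group as $S$ is diffeomorphic to $S$, hence is $S$ or $\bar S$; so to force $\pi_1(S) \not\cong \pi_1(S^\sigma)$ it suffices to show $S^\sigma$ is neither isomorphic to $S$ nor to $\bar S$, equivalently that the unordered pair of \emph{unmarked} triangle curves $\{C_1^\sigma, C_2^\sigma\}$ is not $\{C_1, C_2\}$ nor its complex conjugate. This is where the explicit two-critical-value polynomials earn their keep: one picks the example so that $C_1^\sigma$ is distinguished from $C_1$, $C_2$, $\bar C_1$, $\bar C_2$ by a manifestly Galois-equivariant invariant of the dessin (cardinalities of vertex valency multisets, the field of moduli, etc.), which rules out all the bad coincidences at once.

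The key steps in order are therefore: (1) recall the dessins/Belyi dictionary and the $\Gal(\bar\Q/\Q)$-action on dessins attached to degree-$n$ polynomials with two critical values; (2) produce a concrete pair of such dessins in one Galois orbit, with a distinguishing Galois-invariant; (3) package each as a spherical generating triple of an explicit finite group $G$, verify the freeness (stabiliser-disjointness) hypothesis so the quotients are genuine Beauville surfaces; (4) invoke weak rigidity to reduce non-isomorphism of fundamental groups to non-isomorphism of the associated (unordered, up to conjugation) pairs of unmarked triangle curves; (5) conclude from step (2) that $S$ and $S^\sigma$ have non-isomorphic $\pi_1$, while their profinite completions agree because $S$ and $S^\sigma$ have isomorphic Grothendieck étale fundamental groups (conjugate varieties over $\bar\Q$). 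I expect the main obstacle to be step (3) together with the bookkeeping in step (4): one must choose $G$ and the second triple so that \emph{none} of the finitely many potential isomorphisms $C_i^\sigma \cong C_j^{(\pm)}$ survives, and in particular so that the ambiguity ``$S$ or $\bar S$'' is genuinely excluded — this forces a careful choice of a non-real, non-self-conjugate dessin and a check that complex conjugation does not accidentally identify the orbit members. The rest (verifying generation, computing valency lists, checking the disjointness condition in $G$) is explicit but routine, and is exactly the content we defer to the worked examples that follow.
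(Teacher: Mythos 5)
Your overall architecture coincides with the paper's: a Galois orbit of degree-$n$ polynomials with critical values $\{0,1\}$ supplies the varying triangle curve, an auxiliary Galois-rigid triple (in the paper: the unique Hurwitz class of signature $(5,5,5)$ in $\mathfrak{A}_7$) supplies the second factor, freeness follows from the disjointness of the signatures $(2,6,7)$ and $(5,5,5)$, and weak rigidity reduces everything to showing that $S^{\sigma}$ is neither $S$ nor $\bar S$. But your resolution of the crucial ``$S$ versus $\bar S$'' ambiguity points in the wrong direction. You propose to take a \emph{non-real, non-self-conjugate} dessin and then check that complex conjugation does not identify the orbit members. For the minimal situation actually used in the paper --- the zero-dimensional variety $\mathbb{W}(7;(2,2,1,1,1);(3,2,2))$ is irreducible over $\Q$ and has exactly two geometric points --- this is impossible: complex conjugation, being an element of $\Gal(\bar{\Q}/\Q)$, permutes the two-element orbit, so it either fixes both curves or swaps them; if it swaps them then $C_1^{\sigma}\cong\bar C_1$, hence $S^{\sigma}\cong\bar S$, and by weak rigidity the fundamental groups \emph{are} isomorphic, killing the example. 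The paper's fix is the exact opposite of yours: one verifies that \emph{both} polynomials in the orbit are real, so $\bar C_i\cong C_i$, and then $C_1^{\sigma}= C_2\not\cong C_1\cong\bar C_1$ settles both exclusions at once. (Your variant could in principle be salvaged with an orbit of length at least $3$ and a careful choice of $\sigma$, but that is a different and harder bookkeeping problem for which you give no mechanism.)

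A second, smaller defect: you propose to distinguish $C_1^{\sigma}$ from $C_1$ by ``a manifestly Galois-equivariant invariant of the dessin (cardinalities of vertex valency multisets, \dots)''. Valency multisets are the cycle types of the monodromy, which are constant on a Galois orbit, so they can never separate two members of the same orbit. What the paper actually does is show that the two monodromy triples are not conjugate under $\Aut(\mathfrak{A}_7)=\mathfrak{S}_7$ (an explicit finite check), together with a MAGMA verification that the coefficient variety is irreducible over $\Q$ but reducible over $\C$ --- which is what produces the Galois conjugation in the first place. So the two genuinely load-bearing verifications --- reality of both dessins, and non-conjugacy of the two triples in $\mathfrak{S}_7$ --- are exactly the ones your outline either inverts or replaces with an invariant that cannot do the job.
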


We consider (see \cite{almeria} for an elementary treatment of what follows) polynomials 
 with only two critical values:
$\{0, 1 \}$.

Let $P \in \mathbb{C}[z]$ be a polynomial with critical values
$\{0,1\}$. 

In order not to have infinitely many polynomials with the same branching behaviour, one considers {\em
normalized polynomials}
 $P(z):= z^n + a_{n-2}z^{n-2} + \ldots a_0$. The condition that $P$ has only $\{0, 1\}$ as critical values,
implies, as we shall briefly recall, that $P$ has coefficients in $\bar{\mathbb{Q}}$. Denote by $K$ the
number field generated by the coefficients of $P$. 

Fix  the types $(m_1, \ldots, m_r)$ and $(n_1, \ldots, n_s)$ of the cycle
decompositions of the respective local monodromies around $0$ and $1$: we can then write our polynomial $P$
in two ways, namely as:
$$
P(z) = \prod_{i=1}^r (z - \beta_i)^{m_i},
$$ and 
$$P(z)  = 1 + \prod_{k=1}^s (z - \gamma_k)^{n_k}.$$

We have the equations $F_1 = \sum m_i \beta_i = 0$ and $F_2=\sum n_k \gamma_k = 0$ (since $P$ is
normalized). Moreover, $m_1 + \ldots + m_r = n_1 + \ldots + n_s = n = degP$ and therefore, since
$\sum_j (m_j-1) + \sum_i(n_i -1) = n-1$, we get $r+s = n+1$.

Since we have
$\prod_{i=1}^r (z -
\beta_i)^{m_i} = 1+ \prod_{k=1}^s (z - \gamma_k)^{n_k}$, comparing coefficients we obtain further
$n-1$ polynomial equations with integer coefficients in the variables $\beta_i$, $\gamma_k$, which we
denote by $F_3=0, \ldots, F_{n+1}=0$. Let $\mathbb{V}(n;(m_1,\ldots,m_n),(n_1,\ldots,n_s))$ be the
algebraic set in affine $(n+1)$-space defined by the equations $F_1=0, \ldots,F_{n+1}=0$. Mapping a
point of this algebraic set to the vector $(a_0,\ldots,a_{n-2})$ of coefficients of the corresponding
polynomial $P$ we obtain a set 
$$\mathbb{W}(n;(m_1,\ldots,m_n),(n_1,\ldots,n_s))$$ (by elimination of variables) in affine $(n-1)$
space. Both these are finite algebraic sets  defined over $\Q$ since by Riemann's existence theorem  they
are either empty or have dimension $0$. 

Observe also that the equivalence classes of monodromies $ \mu \colon \pi_1 (\PP^1 \setminus \{ 0.1.\infty \})
\ra \FS_n$ correspond to the orbits of the group of n-th roots of $1$
(we refer to \cite{almeria} for more details).

\begin{lem}\label{ex} $$\mathbb{W}:=\mathbb{W}(7;(2,2,1,1,1);(3,2,2))$$
 is irreducible over
$\mathbb{Q}$ and splits into two components over $\mathbb{C}$.
\end{lem}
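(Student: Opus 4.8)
The plan is to exhibit the algebraic set $\mathbb{V} := \mathbb{V}(7;(2,2,1,1,1);(3,2,2))$ — equivalently $\mathbb{W}$, since elimination of variables does not change the number field of definition nor the splitting behaviour over $\mathbb C$ — by a direct, if somewhat tedious, computation, and then to read off from it both irreducibility over $\mathbb Q$ and the splitting into two components over $\mathbb C$. First I would write the normalized polynomial $P(z) = z^7 + a_5 z^5 + a_4 z^4 + a_3 z^3 + a_2 z^2 + a_1 z + a_0$ of degree $n=7$, noting that $r+s = n+1 = 8$ with $(r,s) = (5,3)$ as required by the prescribed cycle types. The two factorizations
$$ P(z) = (z-\beta_1)^2 (z-\beta_2)^2 (z-\beta_3)(z-\beta_4)(z-\beta_5), \qquad P(z) = 1 + (z-\gamma_1)^3 (z-\gamma_2)^2 (z-\gamma_3)^2 $$
together with the normalization conditions $F_1 = 2\beta_1 + 2\beta_2 + \beta_3 + \beta_4 + \beta_5 = 0$ and $F_2 = 3\gamma_1 + 2\gamma_2 + 2\gamma_3 = 0$ give, upon comparing the remaining coefficients, the system $F_3 = \cdots = F_8 = 0$. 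The cleanest way to make this tractable is to use the derivative: $P'(z) = 7(z-\beta_1)(z-\beta_2)(z-\gamma_1)(z-\gamma_2)(z-\gamma_3)$, since the simple roots $\beta_3,\beta_4,\beta_5$ of $P$ are not roots of $P'$, while $\beta_1,\beta_2$ are simple roots of $P'$ and $\gamma_1$ is a double root and $\gamma_2,\gamma_3$ simple roots of $P'$. Thus $P$ and $P-1$ share the factorization data of $P'$, and one is reduced to a small polynomial identity.

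Second, having reduced to a finite system over $\mathbb Q$, I would compute (by hand with the above derivative trick, or citing a Gröbner basis / resultant computation) that the ideal $(F_1,\dots,F_8)$ defines a zero-dimensional scheme whose coordinate ring is a field, namely a single number field $M$ with $[M:\mathbb Q] = 2$; this is exactly the statement that $\mathbb W$ is irreducible over $\mathbb Q$ and consists of two points after base change to $\mathbb C$ (equivalently $\bar{\mathbb Q}$), which are exchanged by $\Gal(\bar{\mathbb Q}/\mathbb Q)$ acting through $\Gal(M/\mathbb Q) \cong \mathbb Z/2$. The fact that the dimension is $0$ is guaranteed a priori by Riemann's existence theorem (the set of such normalized polynomials, up to the finite root-of-unity action, is in bijection with a finite set of combinatorial monodromy data), as already noted in the excerpt; so the only genuine content is that the set is \emph{nonempty} and has \emph{exactly two} complex points forming a single Galois orbit.

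Third, I would verify nonemptiness and the count of two: by Riemann's existence theorem the number of complex points of $\mathbb V$ equals the number of equivalence classes (under the $\mathbb Z/7$ root-of-unity action) of triples $(\tau_0,\tau_1,\tau_\infty) \in \FS_7^3$ with $\tau_0\tau_1\tau_\infty = 1$, generating a transitive subgroup, where $\tau_0$ has cycle type $(2,2,1,1,1)$, $\tau_1$ has cycle type $(3,2,2)$, and $\tau_\infty$ is a $7$-cycle. So I would enumerate these "dessins": count the $7$-cycles $\tau_\infty$, for each factor it as $\tau_0^{-1}$ with the correct cycle type times $\tau_1^{-1}$ with the correct cycle type, check transitivity (automatic once $\tau_\infty$ is a $7$-cycle), and quotient by the normalizer action; the claim is that this yields exactly $2$ classes. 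That these two are not individually defined over $\mathbb Q$ (hence genuinely conjugate, not each rational) follows because if both were rational the field $M$ above would be $\mathbb Q$, contradicting the degree-$2$ computation — alternatively one checks directly that no real normalized polynomial with these branching data exists with all coefficients rational, e.g. by showing the two solutions are complex conjugate with an irrational coefficient.

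\textbf{Main obstacle.} The hard part is the explicit combinatorial/algebraic computation itself: enumerating the braid orbit of dessins of type $((2,2,1,1,1),(3,2,2),(7))$ and confirming the count is exactly $2$, and symmetrically checking that the polynomial system $(F_1,\dots,F_8)$ really cuts out a reduced, irreducible, degree-$2$ scheme over $\mathbb Q$ (in particular that it is reduced — no embedded or multiple structure — and that the two points do not descend individually to $\mathbb Q$). Everything else — the bookkeeping $r+s = n+1$, the product formula for $P'$, the equivalence via Riemann existence — is routine and already set up in the preceding discussion. I expect the derivative factorization $P' = 7(z-\beta_1)(z-\beta_2)(z-\gamma_1)^2\cdots$ wait, more precisely $P' = 7(z-\beta_1)(z-\beta_2)(z-\gamma_1)(z-\gamma_2)(z-\gamma_3)$ after accounting for multiplicities, to cut the computation down to a manageable resultant, so that the two-point splitting becomes visible as the factorization of a single irreducible quadratic over $\mathbb Q$.
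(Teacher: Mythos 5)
Your proposal takes essentially the same route as the paper: the paper's entire proof of this lemma is the single sentence that the claim ``can easily be calculated by a MAGMA routine,'' i.e.\ it defers precisely the finite computation (solving $F_1=\dots=F_8=0$, equivalently enumerating the monodromy classes up to the root-of-unity action) that you correctly identify as the whole content and likewise leave to a Gr\"obner-basis or dessin count. Two small slips in your write-up are worth flagging: the derivative factorization you finally settle on should read $P'(z)=7(z-\beta_1)(z-\beta_2)(z-\gamma_1)^2(z-\gamma_2)(z-\gamma_3)$ --- degree $6$, with $\gamma_1$ a double root exactly as you say in prose; the degree-$5$ version you end with would make the hand computation inconsistent. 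Also, your suggested alternative check that ``the two solutions are complex conjugate'' is not what happens in this example: the paper observes that both polynomials (hence both triangle curves $C_1$, $C_2$) are real, the two points of $\mathbb{W}$ being conjugate over a real quadratic field, and this reality is used later in the paper, so only your primary route (the degree-$2$, irreducible, reduced computation over $\mathbb{Q}$) is the right one.
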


\begin{proof}
This can easily be  calculated  by a MAGMA routine.\end{proof}

 The above lemma implies that
$\Gal(\bar{\mathbb{Q}}/\mathbb{Q})$ acts transitively on
$\mathbb{W}$. Looking at the possible monodromies,  one sees that there are exactly two real non
equivalent polynomials.

In both cases, which will be explicitly described later on, the two
permutations, of types
$(2,2)$ and
$(3,2,2)$, are seen to generate
$\mathfrak{A}_7$ and the respective normal closures of the two polynomial maps 
are easily seen to give (we use here the fact that the automorphism group of $\mathfrak{A}_7$ is
$\mathfrak{S}_7$) nonequivalent  triangle curves $D_1$, $D_2$. 

By Hurwitz's formula, we see that $g(D_i) = \frac{|\mathfrak{A}_7|}{2}(1 - \frac{1}{2} - \frac{1}{6}
-\frac{1}{7}) + 1 = 241$.

\begin{df}
Let $(a_1,a_2,a_3)$ and $(b_1,b_2,b_3)$ be two spherical systems of generators of a finite group $G$ of
the same signature, i.e., $\{ord(a_1), ord(a_2), ord(a_3) \} = \{ord(b_1), ord(b_2), ord(b_3) \}$.
Then $(a_1,a_2,a_3)$ and $(b_1,b_2,b_3)$ are called {\em Hurwitz equivalent} iff they are equivalent
under the equivalence relation generated by
$$
(a_1,a_2,a_3) \equiv (a_2, a_2^{-1}a_1a_2, a_3),
$$
$$
(a_1,a_2,a_3) \equiv (a_1,a_3, a_3^{-1}a_2a_3).
$$
\end{df}

It is well known that two such triangle curves are isomorphic,
compatibly with the action of the group $G$, if and only if the two spherical
systems of generators are {\em Hurwitz equivalent}.  

\begin{lem}
There is exactly one Hurwitz equivalence class of triangle curves given by 
a {\em spherical system of generators of   signature $(5,5,5)$} of
$\mathfrak{A}_7$. 
\end{lem}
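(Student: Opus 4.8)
The plan is to reduce the statement to a finite computation in the alternating group $\mathfrak{A}_7$, following the same pattern as Lemma~\ref{ex} and the discussion of the $(3,2,2)$ polynomials. First I would recall that, by Riemann's existence theorem, isomorphism classes of triangle curves $D$ with $D/G \cong \PP^1$ branched in three points and with a \emph{chosen} identification of $G$ with a prescribed abstract group correspond bijectively to Hurwitz equivalence classes of spherical systems of generators $(a_1,a_2,a_3)$ of $G$ with $a_1a_2a_3 = 1$; here the prescribed signature is $(5,5,5)$, so each $a_i$ has order $5$. So the task is purely group-theoretic: enumerate all triples $(a_1,a_2,a_3)$ of elements of order $5$ in $\mathfrak{A}_7$ with $a_1a_2a_3=1$ and $\langle a_1,a_2,a_3\rangle = \mathfrak{A}_7$, and show that they form exactly one orbit under the combined action of the braid (Hurwitz) moves and—since we are counting \emph{unmarked} triangle curves, or at least triangle curves up to the action of $\Aut(G)=\mathfrak{S}_7$ as used just above for the $(3,2,2)$ case—the outer automorphism group.

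The key steps, in order, are: (1) identify the conjugacy classes of order-$5$ elements in $\mathfrak{A}_7$ (the $5$-cycles, forming a single $\mathfrak{A}_7$-class that splits from the $\mathfrak{S}_7$-class or stays single—one checks the centralizer order to see there is essentially one relevant class up to $\mathfrak{S}_7$-conjugacy); (2) since the three orders are all $5$, there is no constraint forcing the $a_i$ into distinct classes, so one simply runs over representatives; (3) count the structure constant, i.e.\ the number of pairs $(a_1,a_2)$ with $a_1,a_2$ of order $5$ and $a_1a_2$ of order $5$, using the character table of $\mathfrak{A}_7$ via the Frobenius formula $\frac{|C_1||C_2|}{|G|}\sum_\chi \frac{\chi(g_1)\chi(g_2)\chi(g_3^{-1})}{\chi(1)}$; (4) discard the triples generating a proper subgroup (the only maximal subgroups that could contain three $5$-cycles with product $1$ are handled by a short list—e.g.\ subgroups isomorphic to $\mathfrak{A}_6$, $\mathrm{PSL}(2,7)$, etc.—and one checks which of these admit a $(5,5,5)$ generating triple); (5) compute the orbits of the Hurwitz moves together with simultaneous conjugation by $\mathfrak{S}_7$ on the remaining triples and verify there is exactly one. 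In practice steps (3)–(5) are exactly what a MAGMA routine does, so I would simply assert, as in Lemma~\ref{ex}, that this is a finite check carried out by computer.

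The main obstacle I expect is step (5): the number of generating $(5,5,5)$-triples in $\mathfrak{A}_7$ can be sizeable, and verifying that the Hurwitz moves (a non-trivial groupoid action, since the braid group acts) together with $\Aut(G)$-conjugation fuse all of them into a single orbit is not something one sees by hand—it genuinely requires the orbit computation. A secondary subtlety is being careful about \emph{which} equivalence is intended: "triangle curves given by a spherical system of generators" up to Hurwitz equivalence as literally defined in the preceding Definition counts \emph{marked} triangle curves $(D,G,\eta)$ up to the $\Aut(G)$-action that was invoked a few lines above for the $\mathfrak{A}_7$ polynomials; one must make sure the braid relations as written (which only conjugate $a_1$ by $a_2$, etc.) generate the full braid action on $3$-tuples, which they do, so no issue there. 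Modulo those points, the proof is: \emph{by Riemann's existence theorem the claim is equivalent to the assertion that the set of $\Aut(\mathfrak{A}_7)$-classes of generating spherical $(5,5,5)$-systems of $\mathfrak{A}_7$ has exactly one element under Hurwitz equivalence, and this is verified by a direct (MAGMA) computation}, entirely parallel to Lemma~\ref{ex}.
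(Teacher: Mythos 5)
Your proposal is essentially the paper's proof: the paper disposes of this lemma with the single sentence ``This is shown by an easy MAGMA routine,'' i.e.\ exactly the reduction via Riemann's existence theorem to a finite orbit computation on spherical $(5,5,5)$-generating systems of $\mathfrak{A}_7$ that you describe. The one point to watch is the equivalence relation: the lemma (and the remark following it, which concludes the two curves carry \emph{the same action} of $G$) uses Hurwitz equivalence as literally defined by the braid moves, \emph{without} fusing orbits by $\Aut(\mathfrak{A}_7)=\mathfrak{S}_7$, so your computation must confirm a single orbit under the braid moves alone rather than only after adding the $\Aut(G)$-action.
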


\begin{proof}
This is shown by an easy MAGMA routine.
\end{proof}
\begin{oss}
In other words, if $D_1$ and $D_2$ are two triangle curves given by spherical systems
of generators of signature
$(5,5,5)$ of
$\mathfrak{A}_7$, then $D_1$ and $D_2$ are not only isomorphic as algebraic curves, but they have the
same action of $G$.
\end{oss}

Let $D$ be  the triangle curve given by a(ny) spherical systems
of generators of  signature
$(5,5,5)$ of
$\mathfrak{A}_7$. Then by Hurwitz' formula $D$ has genus 505. 

Consider the two triangle curves $D_1$ and $D_2$ as in example \ref{ex}.
Clearly $\mathfrak{A}_7$ acts freely on $D_1 \times D$ as well as on
$D_2 \times D$ and we obtain two non isomorphic Beauville surfaces 
$S_1 : = (D_1 \times D) / G$, $S_2 : = (D_2 \times D) / G$.

\begin{prop} 1) $S_1$ and $S_2$ have nonisomorphic fundamental groups.

2) There is a field automorphism $\sigma \in \Gal(\bar{\Q}/\Q)$ such that $S_2 =
(S_1)^{\sigma}$. In particular, the profinite completions of $\pi_1(S_1)$ and  $\pi_1(S_2)$ are isomorphic.
\end{prop}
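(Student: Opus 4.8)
The plan is to prove the two assertions separately, using the structural results on surfaces isogenous to a product together with the explicit group theory over $\mathfrak{A}_7$.

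For part 1), the key point is that $S_1$ and $S_2$ are Beauville surfaces, hence rigid; by the theorem on surfaces isogenous to a product quoted in the excerpt, a surface with the same fundamental group and Euler number as $S_i$ is either isomorphic to $S_i$ or to $\bar S_i$. So to show $\pi_1(S_1)\not\cong\pi_1(S_2)$ it suffices to show that $S_2$ is neither isomorphic to $S_1$ nor to $\bar S_1$. Now $S_1 = (D_1\times D)/\mathfrak{A}_7$ and $S_2 = (D_2\times D)/\mathfrak{A}_7$, where $D$ is \emph{the} curve of signature $(5,5,5)$ (unique up to isomorphism compatible with the $G$-action, by the Hurwitz-class lemma). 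By the rigidity lemma of \cite{isogenous}, any isomorphism $S_1 \cong S_2$ (or $S_1\cong\bar S_2$) lifts to an isomorphism of product type between $D_1\times D$ and $D_2\times D$ (or their conjugates) commuting with the $G$-actions; since the $D$-factors have signature $(5,5,5)$ and the $D_i$-factors have signature $(7,6,6)$, the factors cannot be interchanged, so we would get an isomorphism $D_1\cong D_2$ compatible with the $\mathfrak{A}_7$-action (up to $\Aut(\mathfrak{A}_7)=\mathfrak{S}_7$). But $D_1$ and $D_2$ arise from the two polynomials in Lemma \ref{ex}, which give \emph{non-equivalent} triangle curves — precisely the non-equivalence recorded after Lemma \ref{ex}, where the $\mathfrak{S}_7$-action on Hurwitz classes is taken into account. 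Passing to the complex conjugate does not help either: complex conjugation acts on the two components of $\mathbb{W}(7;(2,2,1,1,1);(3,2,2))$, and the two \emph{real} polynomials are still inequivalent, so $\bar D_1 \not\cong D_2$ compatibly with $G$. This gives the contradiction and hence part 1).

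For part 2), by Lemma \ref{ex} the set $\mathbb{W}(7;(2,2,1,1,1);(3,2,2))$ is irreducible over $\mathbb{Q}$ but splits into two components over $\mathbb{C}$; hence $\Gal(\bar\Q/\Q)$ acts transitively on $\mathbb{W}$, and in particular there is $\sigma\in\Gal(\bar\Q/\Q)$ carrying the point defining $P_1$ (hence $D_1$) to the point defining $P_2$ (hence $D_2$). Since the normal closure, the Belyi/polynomial map, and the whole construction are functorial under field automorphisms (as in the remarks preceding Theorem \ref{markedtriangle} and in the functoriality of Belyi maps), $\sigma$ carries $D_1$ to $D_2$ together with its $\mathfrak{A}_7$-action. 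The curve $D$ of signature $(5,5,5)$, being the unique one in its Hurwitz class, is defined over $\mathbb{Q}$ (its moduli point is Galois-invariant, and by rigidity $D^\sigma\cong D$ compatibly with $G$). Therefore $\sigma$ sends $D_1\times D$ to $D_2\times D$ equivariantly for $\mathfrak{A}_7$, and hence the quotient $S_1=(D_1\times D)/\mathfrak{A}_7$ to $S_2=(D_2\times D)/\mathfrak{A}_7$; applying $\sigma$ to a point of the Hilbert scheme of the $5$-canonical image of $S_1$, as in the proof of the proposition on $\mathfrak{N}_a$, yields $S_2\cong (S_1)^\sigma$. Finally, Galois conjugate varieties have isomorphic \'etale fundamental groups, so $\widehat{\pi_1(S_1)}\cong\widehat{\pi_1(S_2)}$.

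The main obstacle is the bookkeeping in part 1): one must be careful that ``non-equivalent triangle curves'' really does rule out an isomorphism of \emph{surfaces}, i.e.\ that the rigidity lemma forces product type and forbids swapping the two factors (here guaranteed by the distinct signatures $(7,6,6)$ versus $(5,5,5)$), and that the complex-conjugate case $\bar S_1\cong S_2$ is genuinely excluded — which is exactly why one needs that both \emph{real} polynomials of type $(2,2,1,1,1),(3,2,2)$ are inequivalent, not merely that the two complex components are conjugate. Everything else is a direct application of the cited structure and functoriality results together with the two MAGMA computations.
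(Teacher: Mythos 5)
Your proposal is correct and follows essentially the same route as the paper: reduce part 1) via the structure theorem for surfaces isogenous to a product to excluding $S_2\cong S_1$ and $S_2\cong \bar S_1$, which comes down to the inequivalence of the two real triangle curves $D_1, D_2$, and deduce part 2) from the transitivity of the Galois action on the irreducible-over-$\Q$ set $\mathbb{W}$ together with the uniqueness of the $(5,5,5)$ Hurwitz class forcing $D^\sigma\cong D$. (One trivial slip: the signature of the $D_i$ is $(2,6,7)$, not $(7,6,6)$; the factors still cannot be swapped since the genera are $241$ and $505$.)
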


\begin{oss}
The above proposition proves theorem \ref{beauville}.
\end{oss}
\begin{proof}
 1) Obviously, the two surfaces $S_1$ and $S_2$ have the same topological Euler characteristic. If they had isomorphic
fundamental groups, by theorem 3.3 of \cite{cat03}, $S_2$ would be the complex conjugate surface of
$S_1$. In particular,  $C_1$ would be the complex conjugate triangle curve
of  $C_2$: but this is absurd since  we shall show that both $C_1$ and
$C_2$ are real triangle curves. 

2) We know that $(S_1)^{\sigma} = ((C_1)^{\sigma} \times (C)^{\sigma}) /G$.  Since there is only one
Hurwitz class of triangle curves given by a spherical system  of generators of  signature
$(5,5,5)$ of $\mathfrak{A}_7$, we have $(C)^{\sigma} \cong C$ (with the same action of $G$).
\end{proof}

We determine now explicitly the respective fundamental groups of $S_1$ and $S_2$.

In general, let $(a_1, \ldots, a_n)$ and $(b_1, \ldots, b_m)$ be two sets of spherical generators of a finite
group $G$ of respective order  signatures $r:=(r_1, \ldots, r_n)$,
$s:=(s_1, \ldots, s_m)$.  We denote the corresponding `polygonal' curves by $D_1$, resp. $D_2$.

Assume now that the diagonal action of $G$ on $D_1 \times D_2$ is free. We get then the smooth surface
$S:= ( D_1 \times D_2 ) /G$, isogenous to a product.

Denote by $T_r := T(r_1, \ldots, r_n)$ the {\em polygonal group} 
$$
\langle x_1, \ldots, x_{n-1} | x_1^ {r_1} =
\ldots = x_{n-1}^ {r_{n-1}} = (x_1 x_2 \ldots  x_{n-1})^ {r_n} = 1\rangle.
$$ 

We have the exact sequence (cf. \cite{isogenous} cor. 4.7)
$$ 1 \rightarrow \pi_1 \times \pi_2 \rightarrow T_r \times T_s
\rightarrow G \times G \rightarrow 1,
$$ where $\pi_i := \pi_1(D_i)$.

Let $\Delta_G$ be the diagonal in $G \times G$ and let $H$ be the inverse image of $\Delta_G$ under
$\Phi \colon T_r \times T_s
\rightarrow G \times G$.  We get the exact sequence 
$$ 1 \rightarrow \pi_1 \times \pi_2 \rightarrow H
\rightarrow G \cong \Delta_G \rightarrow 1.
$$ 

\begin{oss}
$\pi_1(S) \cong H$ (cf. \cite{isogenous} cor. 4.7).
\end{oss}

We choose now an arbitrary spherical system of generators of  signature $(5,5,5)$ of $\mathfrak{A}_7$, 
for instance
$((1,7,6,5,4), (1,3,2,6,7), (2,3,4,5,6))$. Note that we use here MAGMA's notation, where permutations act
on the right (i.e., $ab$ sends $x$ to $(xa)b$).

A MAGMA routine shows that 
\begin{equation}
((1,2)(3,4), (1,5,7)(2,3)(4,6), (1,7,5,2,4,6,3))
\end{equation}
 and 
\begin{equation}
((1,2)(3,4), (1,7,4)(2,5)(3,6), (1,3,6,4,7,2,5))
\end{equation}
are two representatives of spherical generators of  signature $(2,6,7)$ yielding
two non isomorphic  triangle curves $C_1$ and $C_2$, each of which
is isomorphic to its complex conjugate. In fact, an alternative
 direct argument is as follows.  First of all, $C_i$ is isomorphic to its complex
conjugate triangle curve since, for an appropriate choice of the real base point,
complex conjugation sends $ a \mapsto a^{-1}, b \mapsto b^{-1}$
and  one sees that the two corresponding monodromies
are permutation equivalent (see  Figure \ref{figura1} and  Figure \ref{figura2}).

Moreover, since $\Aut(\mathfrak{A}_7) = 
\mathfrak{S}_7$, if the two triangle curves were isomorphic, then the two monodromies were conjugate
in $\mathfrak{S}_7$. That this is not the case is seen again by the following pictures. 
\begin{figure}[htbp]
\begin{center}
\input{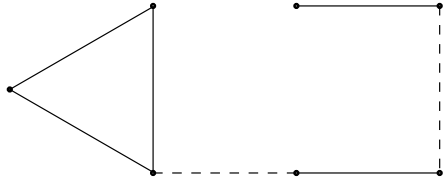}
\end{center}
\caption{Monodromy corresponding to (1)}
\label{figura1}
\end{figure}

\begin{figure}[htbp]
\begin{center}
\input{figura1.pstex_t}
\end{center}
\caption{Monodromy corresponding to (2)}
\label{figura2}
\end{figure}

The two corresponding
homomorphisms
$\Phi_1 \colon  T_{(2,6,7)} \times T_{(5,5,5)}
\rightarrow \mathfrak{A}_7 \times \mathfrak{A}_7$ and $\Phi_2 \colon  T_{(2,6,7)} \times T_{(5,5,5)}
\rightarrow \mathfrak{A}_7 \times \mathfrak{A}_7$ give two exact sequences

$$ 1 \rightarrow \pi_1(C_1) \times \pi_1(C) \rightarrow T_{(2,6,7)} \times T_{(5,5,5)}
\rightarrow \mathfrak{A}_7 \times \mathfrak{A}_7 \rightarrow 1,
$$ and
$$ 1 \rightarrow \pi_1(C_2) \times \pi_1(C) \rightarrow T_{(2,6,7)} \times T_{(5,5,5)}
\rightarrow \mathfrak{A}_7 \times \mathfrak{A}_7 \rightarrow 1,
$$

yielding two non isomorphic fundamental groups $\pi_1(S_1) = \Phi_1 ^{-1} (\Delta_{\mathfrak{A}_7})$
and $\pi_1(S_2) = \Phi_2 ^{-1} (\Delta_{\mathfrak{A}_7})$ fitting both in an exact sequence of type
$$ 1 \rightarrow \pi_{241} \times \pi_{505}  \rightarrow \pi_1(S_j) 
\rightarrow
\Delta_{\mathfrak{A}_7} \cong \mathfrak{A}_7 \rightarrow 1,
$$

where $\pi_{241} \cong \pi_1(C_1) \cong \pi_1(C_2)$, $\pi_{505} = \pi_1(C)$.

Using the same trick that we used for our main theorems, namely, using a surjection
of a group $\Pi_g \ra \mathfrak{A}_7$, $g \geq 2$, we get infinitely many examples of pairs of
fundamental groups which are nonisomorphic, but which have 
isomorphic profinite completions. 

This implies the following

\begin{teo}
There is an infinite sequence $g_1 < g_2 < \ldots < g_i < \ldots$ and for each $g_i$ there is a pair of surfaces $S_1(g_i)$ and $S_2(g_i)$ isogenous to a product, such that 
\begin{itemize}
\item the corresponding connected components $\mathfrak{N}(S_1(g_i))$ and $\mathfrak{N}(S_2(g_i))$ are disjoint,
\item there is a $\sigma \in \Gal(\bar{\Q} / \Q)$ such that $\sigma(\mathfrak{N}(S_1(g_i))) = \mathfrak{N}(S_2(g_i))$,
\item $\pi_1(S_1(g_i))$ is non isomorphic to $\pi_1(S_2(g_i))$, but they have isomorphic profinite completions,
\item the fundamental groups fit into an exact sequence $$ 1 \rightarrow \Pi_{241} \times \Pi_{g_i'}  \rightarrow \pi_1(S_j(g_i)) 
\rightarrow
 \mathfrak{A}_7 \rightarrow 1, \ j = 1,2.
$$
\end{itemize}
\end{teo}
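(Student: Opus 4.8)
The plan is to re-run the construction of the previous proposition, with the rigid $(5,5,5)$--triangle curve $D$ (of genus $505$) replaced by a one--parameter family, keeping the genus--$241$ factor fixed. For each integer $g\geq 2$ fix, over $\bar{\Q}$, a curve $C'$ of genus $g$ together with a surjection $\mu_g\colon\Pi_g=\pi_1(C')\ra\mathfrak{A}_7$, and let $C^{(g)}\ra C'$ be the associated connected \'etale $\mathfrak{A}_7$--cover; by Hurwitz' formula $C^{(g)}$ has genus $g_i':=|\mathfrak{A}_7|(g-1)+1=2520(g-1)+1$. As $\mathfrak{A}_7$ acts freely on $C^{(g)}$, the diagonal action on $C_1\times C^{(g)}$ is free, and since $C_1$ has genus $241\geq 2$ the quotient $S_1(g):=(C_1\times C^{(g)})/\mathfrak{A}_7$ is a surface isogenous to a higher product of unmixed type (defined over $\bar{\Q}$). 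As in the previous proposition (Lemma \ref{ex} together with the transitivity of $\Gal(\bar{\Q}/\Q)$ on $\mathbb{W}$) fix once and for all a $\sigma\in\Gal(\bar{\Q}/\Q)$ with $(C_1)^{\sigma}\cong C_2$ compatibly with the $\mathfrak{A}_7$--action, and set $S_2(g):=(S_1(g))^{\sigma}$.

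Applying $\sigma$ to the construction and identifying $\mathfrak{A}_7^{\sigma}$ with $\mathfrak{A}_7$ via $\Ad(\sigma)$ gives $S_2(g)=(C_2\times (C^{(g)})^{\sigma})/\mathfrak{A}_7$, where $(C^{(g)})^{\sigma}\ra (C')^{\sigma}$ is again a connected \'etale $\mathfrak{A}_7$--cover of a genus--$g$ curve, hence of genus $g_i'$ (genus and the property of being an \'etale $G$--cover are Galois invariant). Since $\Gal(\bar{\Q}/\Q)$ acts on $\pi_0(\mathfrak M)$ — $\mathfrak M$ is defined over $\Q$ — compatibly with $X\mapsto\mathfrak N(X)$, this already yields the second bullet, $\sigma(\mathfrak N(S_1(g)))=\mathfrak N(S_2(g))$. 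By cor. 4.7 of \cite{isogenous} one also gets
$$ 1\ra \pi_1(C_1)\times\pi_1(C^{(g)})\ra\pi_1(S_1(g))\ra\mathfrak{A}_7\ra 1 , $$
and likewise $1\ra\pi_1(C_2)\times\pi_1((C^{(g)})^{\sigma})\ra\pi_1(S_2(g))\ra\mathfrak{A}_7\ra 1$; this is the fourth bullet, because $\pi_1(C_1)\cong\pi_1(C_2)\cong\Pi_{241}$ and $\pi_1(C^{(g)})\cong\pi_1((C^{(g)})^{\sigma})\cong\Pi_{g_i'}$.

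For the first and third bullets one argues exactly as in the previous proposition. The surfaces $S_1(g)$ and $S_2(g)$ have the same topological Euler number; if they were in the same connected component of $\mathfrak M$, or had isomorphic fundamental groups, then by Theorem 3.3 of \cite{cat03} (with the weak rigidity of surfaces isogenous to a product and the rigidity Lemma 3.8 of \cite{isogenous}) $S_2(g)$ would be $S_1(g)$ or its complex conjugate, and by uniqueness of the minimal realisation of a surface isogenous to a product — together with $\Aut(\mathfrak{A}_7)=\mathfrak{S}_7$ — this would force $(C_2,\mathfrak{A}_7)$ to be isomorphic, compatibly with the group action, to $(C_1,\mathfrak{A}_7)$ or to its complex conjugate. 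But $C_1,C_2$ were chosen (as the two real, non--Hurwitz--equivalent systems of generators of signature $(2,6,7)$ of $\mathfrak{A}_7$ of the previous proposition) so that each is isomorphic, as an $\mathfrak{A}_7$--triangle curve, to its own complex conjugate, while $(C_1,\mathfrak{A}_7)\not\cong(C_2,\mathfrak{A}_7)$: a contradiction. Hence $\mathfrak N(S_1(g))$ and $\mathfrak N(S_2(g))$ are distinct (hence disjoint) and $\pi_1(S_1(g))\not\cong\pi_1(S_2(g))$, whereas, being Galois--conjugate complex varieties, $S_1(g)$ and $S_2(g)$ have isomorphic \'etale fundamental groups and therefore $\widehat{\pi_1(S_1(g))}\cong\widehat{\pi_1(S_2(g))}$. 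Finally, taking $g_i:=i+1$ for $i\geq 1$ makes $g_i'=2520\,i+1$ strictly increasing, so $\chi(\hol_{S_j(g_i)})=240(g_i-1)$ takes pairwise distinct values and the surfaces are pairwise non--homeomorphic; this produces the asserted infinite sequence. I expect no genuinely new obstacle: the one delicate input — that $C_1,C_2$ are real but inequivalent as $\mathfrak{A}_7$--triangle curves, which rests on $\Aut(\mathfrak{A}_7)=\mathfrak{S}_7$ and the two monodromy pictures — was already supplied for the previous proposition, and substituting the families $C^{(g)}$ for the rigid factor $D$ leaves every argument untouched, since those used only the rigidity of the triangle--curve factor and the freeness of the diagonal action.
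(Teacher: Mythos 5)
Your proposal is correct and is essentially the paper's own argument: the paper proves this theorem in one line by ``using the same trick'' of replacing the rigid genus--$505$ factor $D$ by the \'etale $\mathfrak{A}_7$--covers coming from surjections $\Pi_g \ra \mathfrak{A}_7$, $g\geq 2$, and you have simply written out the details (freeness of the diagonal action, Galois equivariance, the exact sequence from cor.~4.7 of \cite{isogenous}, and the separation of components via the rigid $(2,6,7)$--factor and $\Aut(\mathfrak{A}_7)=\mathfrak{S}_7$). The only slight imprecision is the phrase ``$S_2(g)$ would be $S_1(g)$ or its complex conjugate'' --- since these surfaces are not rigid, Theorem~3.3 of \cite{cat03} only places $S_2(g)$ in $\mathfrak N(S_1(g))$ or its conjugate component --- but your subsequent appeal to the unicity of the minimal realisation, which identifies the rigid triangle--curve factor, is exactly what is needed and closes the argument.
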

\begin{oss}
1) Many more explicit examples as the one above (but with cokernel group different from 
${\mathfrak{A}_7}$) can be obtained
using polynomials with two critical values.

2) A construction of polynomials with two critical values having a
very large Galois orbit was proposed to us by D. van Straten.
\end{oss}

{\bf Acknowledgements.} The research of the authors was performed in the realm of the
Forschergruppe  790 `Classification of algebraic surfaces and compact complex manifolds' of the D.F.G.
(Deutsche Forschungs Gemeinschaft). 
The first two authors are grateful to the KIAS Seoul
for hospitality in August 2012, where the second author was a KIAS scholar, and the final version of the paper was begun.

They mourn and miss their friend and collaborator Fritz Grunewald, who passed away 
on  March 21,  2010.

Thanks to Ravi Vakil for his interest in our work and for pointing
out a minor error in a very first version of this note. 

Last, but not least, thanks to Zoe Chatzidakis, Minhyong Kim and Umberto Zannier for informing us
and  providing references
for proposition \ref{noAbeliankernel} during the ERC activity at the Centro De Giorgi, Pisa, in October 2012.

\noindent {\bf Authors' address:}

\noindent Ingrid Bauer, Fabrizio Catanese\\ 
Lehrstuhl Mathematik VIII, \\
Mathematisches Institut der Universit\"at Bayreuth\\
  D-95440 Bayreuth, Germany\\
\noindent (email: Ingrid.Bauer@uni-bayreuth.de, Fabrizio.Catanese@uni-bayreuth.de)\\

\end{document}